\documentclass{amsart}


\usepackage[utf8]{inputenc}

\usepackage{color}

\usepackage{amsmath}
\usepackage{amssymb}
\usepackage{amsfonts}
\usepackage{amsthm}
\usepackage{mathabx}
\usepackage{bbm}

\usepackage[mathscr]{eucal}

\usepackage{algorithmic}
\usepackage{algorithm}


\usepackage{xparse}
\usepackage{xspace}

\usepackage{graphicx}
\graphicspath{{resources/images/}{resources/drawings/}}

\usepackage{tikz}
\usetikzlibrary{decorations.pathreplacing}
\usetikzlibrary{calc}


\usepackage[labelformat=empty]{subcaption}

%
%
%
\newif\ifmustfixme
\mustfixmefalse
\definecolor{fixmecolour}{rgb}{0.86,0.20,0.18}
\definecolor{todocolour}{rgb}{0.16,0.63,0.60}
\definecolor{FIXMECOLOUR}{rgb}{0.86,0.20,0.18}
\definecolor{TODOCOLOUR}{rgb}{0.16,0.63,0.60}
\mustfixmetrue
\ifmustfixme
	\definecolor{titlecolor}{rgb}{0.86,0.20,0.18}
\else
	\definecolor{titlecolor}{rgb}{0.0,0.0,0.0}
\fi

\theoremstyle{plain}
\newtheorem{theorem}{Theorem}[section]
\newtheorem{lemma}[theorem]{Lemma}
\newtheorem{lemma-definition}[theorem]{Lemma-Definition}
\newtheorem{corollary}[theorem]{Corollary}

\newtheorem{proposition}[theorem]{Proposition}

\newtheorem{definition}[theorem]{Definition}

\newtheorem*{fact*}{Fact}
\newtheorem*{claim}{Claim}
\newtheorem*{notation}{Notation}

\newtheorem{alphcase}{Case}

\theoremstyle{definition}
\newtheorem{remark}[theorem]{Remark}
\newtheorem*{remark*}{Remark}

\newtheorem{proofstep}{Step}
\newtheorem{proofcase}{Case}
\makeatletter
\@addtoreset{proofpart}{theorem}
\@addtoreset{proofstep}{theorem}
\@addtoreset{proofcase}{theorem}

\makeatother
\newcounter{parentnumber}


\newcommand{\INPUT}{\REQUIRE}
\newcommand{\OUTPUT}{\ENSURE}

%
%


\newcommand{\N}{\mathbb{N}} 
\newcommand{\Z}{\mathbb{Z}} 
\newcommand{\Q}{\mathbb{Q}} 
\newcommand{\R}{\mathbb{R}} 
\newcommand{\F}{\mathbb{F}} 

\newcommand{\PP}{\mathbb{P}} 


\makeatletter
\let\@@mod\mod
\DeclareRobustCommand{\mod}{\@ifstar\@mods\@@mod}
\def\@mods#1{\mkern1mu{\operator@font mod}\mkern 6mu#1}
\makeatother

\newcommand{\sii}{\Longleftrightarrow}

\newcommand{\set}[1]{\left\{ #1 \right\}}
\newcommand{\ind}[1]{\left( #1 \right)}

\newcommand{\floor}[1]{\lfloor #1 \rfloor}

\newcommand{\Min}{\operatorname{Min}}



\newcommand{\oo}{\mathcal{O}} 
\newcommand{\oox}{\oo[x]} 
\newcommand{\ooL}{\oo_{L}} 
\newcommand{\oop}{\oo_{\p}} 

\newcommand{\Kbar}{\overline{K}}
\newcommand{\Kx}{K[x]}
\newcommand{\Fy}{\F[y]}


\newcommand{\oov}{\oo_{v}} 
\newcommand{\oovx}{\oov[x]}

\newcommand{\Kv}{K_{v}}
\newcommand{\Kvx}{\Kv[x]}

\newcommand{\Kvbar}{\overline{K}_{v}}

\newcommand{\Lp}{L_{\p}}


\newcommand{\pp}{\mathcal{P}} 
\newcommand{\p}{\mathfrak{p}} 
\newcommand{\pz}{\mathfrak{p}_{0}} 
\newcommand{\q}{\mathfrak{q}}  
\renewcommand{\l}{\mathfrak{l}}  
\newcommand{\B}{\mathcal{B}} 
\newcommand{\m}{\mathfrak{m}} 


\newcommand{\Max}{\operatorname{Max}}
\newcommand{\Spec}{\operatorname{Spec}}



\newcommand{\type}[0]{\mathbbm{t}}
\newcommand{\tree}[0]{\mathfrak{T}}
\newcommand{\treenop}[0]{\mathfrak{T}^{\nop}}

\newcommand{\Num}[0]{\mathcal{N}}

\newcommand{\fbar}{\overline{f}}


\newcommand{\eto}[1]{e_{1} \cdots e_{#1}}

\newcommand{\epp}[0]{e(\p / \m)}



\newcommand{\Newton}[0]{\operatorname{N}}
\newcommand{\Respol}[0]{\operatorname{R}}
\newcommand{\Rep}[0]{\operatorname{Rep}}
\newcommand{\App}[0]{\operatorname{App}}

\newcommand{\Trunc}[0]{\operatorname{Trunc}}
\newcommand{\ord}[0]{\operatorname{ord}}

\newcommand{\Ref}[0]{\operatorname{Ref}}

\newcommand{\Disorder}[0]{\operatorname{D}}

\newcommand{\nop}[0]{\operatorname{nop}}

\newcommand{\clam}[2]{\lambda_{#1}^{#2}}
\newcommand{\clamnop}[2]{(\lambda_{#1}^{#2})^{\nop}}
\newcommand{\minclam}[2]{\Min\set{\clam{#1}{#2},\clam{#2}{#1}}}
\newcommand{\lmin}[0]{\lambda_{\min}}
\newcommand{\lmax}[0]{\lambda_{\max}}

\newcommand{\ml}[0]{m_{\ell}}

\newcommand{\wvalueext}[3]{#1{w}#2\left(#3\right)}

\DeclareDocumentCommand	\w	{ o m } {\wvalueext{}{ \IfValueT{#1}{_{#1}} }{#2}}
\DeclareDocumentCommand	\wt	{ o m } {\wvalueext{}{ \IfValueT{#1}{_{#1}} }{#2(\theta)} }
\DeclareDocumentCommand	\wv	{ o m } {\wvalueext{\vec}{ \IfValueT{#1}{_{#1}} }{#2}}
\DeclareDocumentCommand	\wdg { o m } {\wvalueext{\hat}{ \IfValueT{#1}{_{#1}} }{#2}}
\DeclareDocumentCommand	\wdt	{ o m } {\wvalueext{\hat}{ \IfValueT{#1}{_{#1}} }{#2(\theta)} }
\DeclareDocumentCommand	\v	{ o m } {v{\IfValueT{#1}{_{#1}} }\left( #2 \right)}
\DeclareDocumentCommand	\vt	{ o m } {v{\IfValueT{#1}{_{#1}} }\left( #2(\theta) \right)}


\newcommand{\Ok}{\operatorname{Ok}}
\DeclareDocumentCommand{\Canonify}{oo}{\operatorname{Canonify}\IfValueT{#1}{\left(#1\right)}}
\DeclareDocumentCommand{\Simplify}{oo}{\operatorname{Simplify}\IfValueT{#1}{_{#1}}\IfValueT{#2}{\left(#2\right)}}
\DeclareDocumentCommand{\Transfer}{ooo}{\IfValueTF{#1}{\underset{#1 \rightarrow #2}{\operatorname{Transfer}}}{\operatorname{Transfer}}\IfValueT{#3}{\left(#3\right)}}



\newcommand{\mm}[0]{\operatorname{MaxMin}}

\newcommand{\ii}[0]{\mathbbm{i}}
\newcommand{\jj}[0]{\mathbbm{j}}
\newcommand{\uu}[0]{\mathbbm{u}}

\newcommand{\Smin}[0]{S_{\min}}


\newcommand{\bigo}[1]{O\left(#1\right)}

\newcommand{\disc}[0]{\operatorname{disc}}

\title{Triangular bases of integral closures}

\author[Stainsby]{Hayden D. Stainsby}
\address{Departament de Matem\`{a}tiques,
         Universitat Aut\`{o}noma de Barcelona,
         Edifici C, E-08193 Bellaterra, Barcelona, Catalunya, Spain}
\email{hds@mat.uab.cat}

\thanks{Partially supported by MTM2013-40680-P from the Spanish MEC}
\date{}
\keywords{fractional ideal, local field, Montes algorithm, Newton polygon, integral basis, OM representation, type, discrete valuation}

\makeatletter
\@namedef{subjclassname@2010}{%
  \textup{2010} Mathematics Subject Classification}

\subjclass[2010]{Primary 11Y40; Secondary 11R04, 14H05, 13P05}

\begin{document}

\begin{abstract}
	In this work, we consider the problem of computing triangular bases of integral closures of one-dimensional local rings.
	
	Let $(K, v)$ be a discrete valued field with valuation ring $\mathcal{O}$ and let $\mathfrak{m}$ be the maximal ideal. We take $f \in \mathcal{O}[x]$, a monic irreducible polynomial of degree $n$ and consider the extension $L = K[x]/(f(x))$ as well as $\mathcal{O}_{L}$ the integral closure of $\mathcal{O}$ in $L$, which we suppose to be finitely generated as an $\mathcal{O}$-module.
	
	The algorithm $\operatorname{MaxMin}$, presented in this paper, computes triangular bases of fractional ideals of $\mathcal{O}_{L}$. The theoretical complexity is equivalent to current state of the art methods and in practice is almost always faster. It is also considerably faster than the routines found in standard computer algebra systems, excepting some cases involving very small field extensions.
\end{abstract}

\maketitle

\section{Introduction}\label{secIntroduction}

Let $(K, v)$ be a discrete valued field with valuation ring $\oo$. Let $\m$ be the maximal ideal, $\pi \in \m$ a generator of $\m$ and $\F = \oo / \m$ the residue class field.

Let $K_v$ be the completion of $K$, and retain $v: \Kbar_v^* \to \Q$ the canonical extension of $v$ to a fixed algebraic closure of $K_v$. Let $\oov$ be the valuation ring of $\Kv$.

Let $f \in \oo[x]$ be a monic, irreducible polynomial of degree $n$ and fix a root $\theta \in \Kbar$ in an algebraic closure of $K$. Let $L = K(\theta)$ be the corresponding finite extension of $K$ and let $\oo_L$ be the integral closure of $\oo$ in $L$, which is a Dedekind domain. We denote the set of non-zero prime ideals of $\oo_L$ by $\pp$.

We suppose that $\oo_L$ is finitely generated as an $\oo$-module. This condition holds under very natural assumptions; for instance, if $L/K$ is separable, or $(K, v)$ is complete, or $\oo$ is a finitely generated algebra over a field \cite[Ch.I, \textsection 4]{Serre:1968}. Under this hypothesis, $\oo_L$ is a free $\oo$-module of rank $n$. An $\oo$-basis of $\oo_L$ is called a \emph{$v$-integral basis} of $\oo_L$.

The computation of $v$-integral bases is an essential task in computational arithmetic geometry. We are interested in constructing \emph{reduced} \emph{triangular} $v$-integral bases. Let us clarify these concepts.

\begin{definition}
	\label{def:wp}
	For each prime ideal $\p \in \pp$, we consider the normalised valuation:
	\begin{align*}
		w_{\p} := e(\p/\m)^{-1} v_{\p} \colon L \longrightarrow e(\p/\m)^{-1}\Z\cup\set{\infty},
	\end{align*}
	where $v_{\p}$ is the canonical discrete valuation of $L$ attached to $\p$ and $e(\p/\m)$ is the ramification index. 
Also, we define
	\begin{align*}
		w \colon L\longrightarrow \Q\cup\set{\infty},\quad \alpha\mapsto\Min\set{w_\p(\alpha)\colon \p\in\pp}. 
	\end{align*}
Clearly, an element $\alpha\in L$ belongs to $\oo_L$ if and only if $w(\alpha)\ge 0$.
\end{definition}

\begin{definition}
		A triangular family of elements in $\oo_L$, are elements
		\begin{align*}
			\frac{g_{0}(\theta)}{\pi^{k_{0}}},\, \frac{g_{1}(\theta)}{\pi^{k_{1}}},\, \dots,\, \frac{g_{n-1}(\theta)}{\pi^{k_{n-1}}},
		\end{align*}
		such that for all $0\le i< n$ the polynomial $g_{i}(x) \in \oo[x]$ is monic of degree $i$, and $k_i$ is a non-negative integer such that $k_{i} \le \wt{g_{i}}$.

A triangular basis of $\oo_L$ is a triangular family which is a $v$-integral basis.
\end{definition}
	
\begin{definition}
	A family $\alpha_{1},\, \dots,\, \alpha_{n} \in \oo_L$ is called reduced if for any family $a_{1},\, \dots,\, a_{n} \in \oo$:
	\begin{align*}
		w\left( \sum_{i = 1}^{n} a_{i} \alpha_{i} \right) = \Min\set{ w( a_{i} \alpha_{i})  : 1 \leq i \leq n }.
	\end{align*}
\end{definition}

There are well-known conditions on a triangular family characterising when it is a reduced basis.
  	
\begin{theorem}
	\label{thm:basis}
	Let $\B=\left(g_{0}(\theta)/\pi^{\floor{\nu_{0}}},\, \dots,\, g_{n-1}(\theta)/\pi^{\floor{\nu_{n-1}}}\right)$ be a triangular family in $\oo_L$ with $\nu_i=w(g_i(\theta))$ for all $0\le i<n$. Then,
	\begin{itemize}
		\item[(1)] $\B$ is a triangular basis if and only if $\floor{\nu_{i}} \geq \floor{\wt{h}}$ for all $h \in \oo[x]$ monic of degree $i$ and all $0 \leq i < n$.
		\item[(2)] $\B$ is a reduced triangular basis if and only if $\nu_{i} \geq \wt{h}$ for all $h \in \oo[x]$ monic of degree $i$ and all $0 \leq i < n$.
	\end{itemize}
\end{theorem}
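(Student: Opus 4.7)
The plan is to establish both parts by comparing the given family $\B$ with a benchmark reduced triangular basis $\B^{*}=(h_i(\theta)/\pi^{\floor{\mu_i}})$, where $\mu_i:=\sup\{\wt{h}:h\in\oo[x]\text{ monic of degree }i\}$; the existence of such a basis achieving $w(h_i(\theta))=\mu_i$ is a standard result in the Okutsu--Montes--Nart theory of types and may be invoked as a black box. Let $m_i:=\floor{\nu_i}$.

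The ``only if'' directions reduce to coefficient comparisons. For (1), the element $h(\theta)/\pi^{\floor{\wt h}}$ lies in $\oo_L$; expressing it as $\sum_j a_j g_j(\theta)/\pi^{m_j}$ with $a_j\in\oo$, the monic triangular structure of the $g_j$ forces $a_j=0$ for $j>i$ and the $\theta^i$-equation $a_i/\pi^{m_i}=1/\pi^{\floor{\wt h}}$; integrality of $a_i$ then yields $m_i\ge\floor{\wt h}$. For (2), the same comparison applied to $h(\theta)$ itself gives $a_i=\pi^{m_i}$, and reducedness then yields $\wt h = w(h(\theta)) \le v(a_i)+\nu_i-m_i = \nu_i$.

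For the ``if'' direction of (1), taking $h=h_i$ in the hypothesis gives $m_i\ge\floor{\mu_i}$, while the triangular-family assumption combined with $\nu_i\le\mu_i$ (since $g_i$ is a candidate in the sup) forces $m_i\le\floor{\mu_i}$; hence $m_i=\floor{\mu_i}$ for all $i$, so $\B$ and $\B^{*}$ share denominator exponents. The change-of-basis matrix $T$ from $\B^{*}$ to $\B$ is then lower-triangular with ones on the diagonal (both $g_i$ and $h_i$ are monic of degree $i$); since $\B\subseteq\oo_L$ and $\B^{*}$ is a basis of $\oo_L$, the entries of $T$ lie in $\oo$, so $T\in\gl{n}{\oo}$ and $\B$ spans $\oo_L$. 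For the ``if'' direction of (2), the hypothesis forces $\nu_i=\mu_i$, so (1) applies and $\B$ is a basis; reducedness then follows from the Okutsu-optimality identity $w(\sum b_i g_i(\theta))=\min_i(v(b_i)+\nu_i)$ (valid whenever $\nu_i=\mu_i$ for every $i$), applied with $b_i=a_i/\pi^{m_i}$. The main obstacle is this last identity: it is the non-trivial statement that an Okutsu-optimal family is valuation-compatible, resting on the deeper theory of Newton polygons of higher order or types presumably developed earlier in the paper.
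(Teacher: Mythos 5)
The paper states this theorem with only the preface ``There are well-known conditions on a triangular family characterising when it is a reduced basis'' and supplies no proof, so there is no in-paper argument to compare against. Evaluating your proposal on its own: the two ``only if'' directions are correct elementary coefficient comparisons, and the ``if'' direction of (1) --- deduce $m_i=\floor{\mu_i}$, then note that the change-of-basis matrix $T$ expressing $\B$ in $\B^{*}$ has $\oo$-entries (because $\B\subset\ooL$ and $\B^{*}$ is an $\oo$-basis), is unitriangular by degree comparison, hence lies in $\gl{n}{\oo}$ --- is sound once the existence of a benchmark reduced triangular basis $\B^{*}$ achieving each $\mu_i$ is granted as external input.

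The gap is in the ``if'' direction of (2). The ``Okutsu-optimality identity'' $w\bigl(\sum_i b_i g_i(\theta)\bigr)=\min_i\bigl(v(b_i)+\nu_i\bigr)$, claimed for families with $\nu_i=\mu_i$, is not a more primitive lemma: specialised to $b_i=a_i/\pi^{m_i}$ with $a_i\in\oo$ it \emph{is} the reducedness of $\B$, and the general $b_i\in K$ case follows by clearing a common power of $\pi$, so the two statements are logically equivalent. Nor can you obtain it from the ingredients already in hand; reducedness does not simply transfer from $\B^{*}$ to $\B$ across the unitriangular $\oo$-matrix $T$, since writing $a^{*}_{j}=\sum_{i\ge j}a_iT_{ij}$ one would need $\min_j\bigl(v(a^{*}_j)+w(\alpha^{*}_j)\bigr)=\min_j\bigl(v(a_j)+w(\alpha_j)\bigr)$, and this can fail because the fractional parts $w(\alpha_j)=\mu_j-\floor{\mu_j}$ are not monotone in $j$, so a large-index coordinate can contaminate $a^{*}_j$ at a lower total valuation. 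You honestly flag this identity as ``the main obstacle''; but invoking it as a black box leaves the hard half of (2) unproved rather than reduced to something genuinely weaker, so the proposal is incomplete precisely where the theorem has real content.
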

	
Therefore, to construct a reduced triangular $\oo$-basis of $\oo_L$ it suffices to find for each $0\le i<n$ a polynomial $g_{i} \in \oo[x]$ monic of degree $i$ such that $\wt{g_{i}}$ is maximal amongst all monic polynomials in $\oo[x]$ of the same degree.

The aim of the paper is to present the \emph{MaxMin algorithm}, a very simple procedure to construct these optimal polynomials $g_{i}$.

The desire to compute triangular local bases comes from their utility in constructing global bases. Let $A$ be a PID and let $B$ be the integral closure of $A$ in a finite extension of its field of fractions. If $B$ is free as an $A$-module, then an $A$-basis of $B$ can be computed by patching local integral bases of $B_{\m}$ as an $A_{\m}$-module for an adequate finite set of non-zero prime ideals $\m \in \Spec(A)$.

This patching process, usually based on the Chinese remainder theorem, is efficient only if the local bases are triangular. In this case, the global $A$-basis of $B$ one obtains is triangular too.

The property of \textit{reducedness} is important for certain applications in function fields \cite{Bauch:2014:thesis}. The MaxMin algorithm has the advantage of producing local bases which are also reduced.

MaxMin is an OM algorithm, it works with data derived from an \emph{OM factorisation} of the polynomial $f$ in $\oo_v[x]$. In 1999, J. Montes extended some ideas of Ore and MacLane and implemented an algorithm to compute a representation of the prime ideals of $\ooL$ by way of factoring the defining polynomial $f$ over $\oovx$. This ``Montes algorithm'' coupled with work by K. Okutsu on constructing explicit integral bases of local fields, gave rise to several theoretical developments concerning \emph{OM representations} of prime ideals \cite{Fernandez:2013wg,Guardia:2011:algorithm,Guardia:2012:hn,Guardia:2013:newapp,Guardia:2013:genetics,Guardia:2011:sfl,Okutsu:1982:i+ii}.

There are other methods for the computation of integral bases based on a previous computation of an OM factorisation of the defining polynomial $f$. In \cite[Sec. 6]{Guardia:2013:newapp} a method was presented based in the computation of certain \emph{multipliers}, following an old idea of Ore. In \cite{Guardia:2009:bases} a more direct \emph{method of the quotients} was presented, which obtains the numerators of a reduced basis by multiplying certain polynomials obtained as a by-product of the OM factorisation algorithm.

These OM methods are extremely fast in practice and their theoretical complexity is lower than that of the traditional methods based mainly on the Round-2 and Round-4 routines by Zassenhaus and Ford. However, both OM methods yield non-triangular bases, and so a triangularisation routine must be applied to the local bases before they can be used to construct a global basis. These linear procedures are slow in practice and constitute a bottleneck for the whole process.

The MaxMin algorithm yields reduced triangular local bases by a direct method, which avoids the use of linear techniques. This makes MaxMin much more efficient in practice. Another advantage of MaxMin is that the method is equally valid for the computation of bases of fractional ideals. This is particularly useful for the computation of bases of Riemann-Roch spaces of divisors of algebraic curves, which requires the computation of bases of certain fractional ideals attached to the divisor.    

It is common in many computational algebra systems, to provide bases in Hermite Normal Form (HNF). This serves two purposes, the first is that canonical bases simplify the comparison of the rings that they generate. The second is that bases in HNF are triangular, and so patching of global bases from local bases is more efficient. However, the routines used to compute the HNF of a given basis require considerably more time than the computation of a basis.

By using the MaxMin algorithm we can offer two distinct improvements. By computing a triangular basis directly, in many circumstances, we do not need HNF at all, resulting in a significant improvement in execution time. Secondly, if HNF is indeed required, it is faster to compute the HNF of a basis which is already triangular, compared to a random basis.

We review OM representations of prime ideals in Section \ref{sec:montes}. In Section \ref{sec:maxmin} the algorithm MaxMin will be introduced. In Section \ref{sec:computational_examples} we will discuss some computational examples. Finally, the proofs of the two main theorems from Sections \ref{sec:montes} and \ref{sec:maxmin} will be deferred until Sections \ref{sec:proof_opt_polys} and \ref{sec:proof_maxmin}.

\section{OM representations of prime ideals}
\label{sec:montes}

The prime ideals of $\oo_L$ are in 1-to-1 correspondence with the prime factors of $f$ in $\oovx$. Let $f=\prod_{\p\in\pp}F_\p$ be the factorisation of $f$ into a product of monic irreducible polynomials $F_\p\in\oovx$. Let $n_\p=\deg F_\p=e(\p/\m)f(\p/\m)$.

Inspired by ideas of Ore and MacLane, J. Montes developed an algorithm to compute \emph{OM representations} 
\begin{align}\label{OMrep}
	\type_{\p} = \left( \psi_{0,\p}; (\phi_{1,\p}, \lambda_{1,\p}, \psi_{1,\p}); \dots; (\phi_{r_\p,\p}, \lambda_{r_\p,\p}, \psi_{r_\p,\p}); (\phi_{\p}, \lambda_{\p}, \psi_{\p}) \right),
\end{align}
of each prime factor $F_{\p}$. An object $\type_\p$ as in \eqref{OMrep} is a \emph{type}; it contains several data structured into levels, encoding relevant arithmetic information about the polynomial $F_\p$ and the prime ideal $\p$.
For the precise definition of a type, we refer to \cite{Guardia:2013:genetics}. We now recall some of the properties of the invariants of a type.

The number $r_{\p}+1$ of levels is called the \emph{order} of the type and $r_{\p}$ is the \emph{Okutsu depth} of $F_{\p}$. The family of polynomials $[ \phi_{1,\p}, \dots, \phi_{r_\p,\p} ]$ is an \emph{Okutsu frame} of $F_{\p}$. These are monic polynomials in $\oox$ which are irreducible in $\oovx$.

If we denote $m_i=\deg\phi_{i,\p}$, we have
\begin{align}
	\label{eq:ms}
	m_{1} \divides \cdots \divides m_{r_\p} \divides m_{r_{\p}+1} = n_\p, \qquad
	m_{1} < \cdots < m_{r_\p} < m_{r_{\p}+1} = n_\p.
\end{align}

The polynomial $\phi_{r_{\p}+1,\p} := \phi_\p$ is an \emph{Okutsu approximation} to $F_\p$; it is a monic polynomial in $\oox$ of degree $n_\p$ which is ``sufficiently close" to $F_\p$ for many purposes. More precisely,
\begin{align*}
	\vt{\phi_{\p}} &> \frac{n_{\p}}{m_{r_{\p}}} \vt{\phi_{r_{\p},\p}}.
\end{align*}

The data $\lambda_{i,\p},\lambda_\p$ are positive rational numbers called the \emph{slopes} of the type. Typically, one denotes $\lambda_{i,\p}=h_i/e_i$, $\lambda_{r_{\p}+1,\p} := \lambda_{\p} = h_{r_{\p}+1}/e_{r_{\p}+1}$ the positive (coprime) numerator and denominator of the slope. One has $e_{r_{\p}+1} = 1$.

The type determines a chain of finite extensions of the residue field $\F$:
\begin{align*}
	\F = \F_{0} \subseteq \F_{1} \subseteq \cdots \subseteq \F_{r_{\p}+1} = \F_{\p}, 
\end{align*}
where $\F_{\p}$ is isomorphic to the residue class field of the finite extension of $K_{v}$ determined by $F_{\p}$. The polynomial $\psi_{0,\p}\in\F[y]$ is one of the prime factors of the reduction of $f$ modulo $\m$. The \textit{residual polynomials} $\psi_{i,\p} \in \F_{i}[y]$ are monic and irreducible; they determine the next extension: $\F_{i+1} = \F_{i}[y] / (\psi_{i,\p})$, for all $0 \le i \le r_{\p}$. The monic polynomial $\psi_{r_{\p}+1,\p} := \psi_{\p} \in \F_{\p}[y]$ has degree one. For $i > 0$, one has $\psi_{i,\p} \ne y$.

If we denote $f_{i} := \deg \psi_{i,\p}$, we have
\begin{align}
	\label{eq:es_fs_ms}
	e(\p/\m) = e_{1} \cdots e_{r_{\p}}, \qquad
	f(\p/\m) = f_{0} \cdots f_{r_{\p}}, \qquad
	m_{i} = e_{1} f_{1} \cdots e_{i-1} f_{i-1}.
\end{align}

We may consider each type $\type_{\p}$ as a path with root note $\psi_{0}$ where each level is written along the edges:
\begin{align}
	\begin{aligned}
		\begin{tikzpicture}[scale=0.8]
			\filldraw (1, 0) circle (2.5pt);
			\draw[anchor=east] (0.8, 0) node {$\psi_{0,\p}$};
			\filldraw (4.5, 0) circle (2.5pt);
			\draw (1, 0) -- (4.5, 0);
			\draw[anchor=south] (2.75, 0) node {$(\phi_{1,\p}, \lambda_{1,\p}, \psi_{1,\p})$};
			\draw (5.5, 0) node {$\cdots$};
			\filldraw (6.5, 0) circle (2.5pt);
			\filldraw (10, 0) circle (2.5pt);
			\draw (6.5, 0) -- (10, 0);
			\draw[anchor=south] (8.25, 0) node {$(\phi_{r_{\p},\p}, \lambda_{r_{\p},\p}, \psi_{r_{\p},\p})$};
			\filldraw (13.5, 0) circle (2.5pt);
			\draw[dotted] (10, 0) -- (13.5, 0);
			\draw[anchor=south] (11.75, 0) node {$(\phi_{\p}, \lambda_{\p}, \psi_{\p})$};
			\draw[anchor=west] (13.7, 0) node {$\type_{\p}$};
		\end{tikzpicture}
	\end{aligned}
\end{align}

Each node $\type$ of this path is identified with the type obtained by gathering all level data from the edges joining $\type$ with the root node.

The last edge is dotted to emphasise that $e_{r_{\p}+1} f_{r_{\p}+1} = 1$, while for all other edges we have $e_{i} f_{i} > 1$ as \eqref{eq:ms} and \eqref{eq:es_fs_ms} show.

Actually, the depth $r_{\p}$ and the data $(\phi_{i,\p}, \lambda_{i,\p}, \psi_{i,\p})$ of all levels $i \le r_{\p}$ are (up to certain equivalence relation) canonical data attached to $F_{\p}$ \cite[Sec. 3.3]{Guardia:2013:genetics}. On the other hand, the last level $(\phi_{\p}, \lambda_{\p}, \psi_{\p})$ strongly depends on the choice of $\phi_{\p}$.

For the proof of the next result, see \cite[Sec. 4]{Guardia:2013:genetics}.

\begin{lemma}
	\label{lem:other_type}
	Let $\phi \in \oox$ be any monic polynomial of degree $n_{\p}$ that satisfies $\vt{\phi} \geq \vt{\phi_{\p}}$. Then, for adequate choice of $\lambda \in \Z_{> 0}$ and $a \in \F_{\p}^{*}$, the object:
	\begin{align*}
		\type'_{\p} = \left( \psi_{0,\p}; (\phi_{1,\p}, \lambda_{1,\p}, \psi_{1,\p}); \dots; (\phi_{r_\p,\p}, \lambda_{r_\p,\p}, \psi_{r_\p,\p}); (\phi, \lambda, y - a) \right),
	\end{align*}
	is a type which constitutes an OM representation of $F_{\p}$ too.
\end{lemma}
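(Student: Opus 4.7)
The plan is to show that $\phi$ qualifies as an Okutsu approximation to $F_\p$ and then to appeal to the standard construction of the last level of a type, which depends on the choice of the key polynomial.

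First, I would verify that the hypothesis $\vt{\phi} \geq \vt{\phi_\p}$ transfers the essential estimate from $\phi_\p$ to $\phi$. Since $\phi_\p$ satisfies $\vt{\phi_\p} > (n_\p / m_{r_\p}) \vt{\phi_{r_\p,\p}}$, the same strict inequality holds for $\phi$. Together with the fact that $\phi$ is monic of degree $n_\p = m_{r_\p+1}$, this is precisely the characterization of an Okutsu approximation to $F_\p$. In the language of types, this means $\phi$ is admissible as the key polynomial at the last level of a type whose truncation to level $r_\p$ agrees with that of $\type_\p$.

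Next, I would construct $\lambda$ and $a$ from the Newton polygon of $F_\p$ at this new last level. Because $F_\p$ is monic of degree $n_\p$ equal to $\deg \phi$, its $\phi$-adic expansion reduces to $F_\p = \phi + R$ with $\deg R < n_\p$, so the $\type_{r_\p,\p}$-Newton polygon of $F_\p$ with respect to $\phi$ consists of a single side. I would take $\lambda$ to be the (positive) slope of that side; positivity follows from the Okutsu bound on $\vt{\phi}$, and integrality ($\lambda \in \Z_{>0}$) follows from the requirement $e_{r_\p+1}=1$ at the last level. Since $f_{r_\p+1}=1$, the residual polynomial is monic of degree one, hence of the form $y-a$; I would then read off $a \in \F_\p$ as the image, in $\F_\p$, of the residual coefficient of $F_\p$ obtained from $R$. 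The condition $\psi_{i,\p}\ne y$ for $i>0$ forces $a\ne 0$, because $F_\p\ne \phi$ (otherwise $\phi$ would be irreducible of the wrong shape) and the single side of the polygon touches the vertical axis at a finite height determined by a nonzero residue; thus $a\in\F_\p^*$.

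Finally, I would verify that $\type'_\p$ is an OM representation of $F_\p$: the truncation to level $r_\p$ is unchanged and still singles out $F_\p$ among the irreducible factors of $f$, and the last level $(\phi,\lambda,y-a)$ was chosen precisely so that the $\type'_\p$-Newton polygon of $F_\p$ is one-sided of slope $\lambda$ with residual polynomial $y-a$. This is exactly the defining condition for an OM representation of $F_\p$.

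The main obstacle is step two: making rigorous the assertion that $\lambda$ arising from the Newton polygon of $F_\p$ with respect to the new $\phi$ is automatically a positive integer, and that the corresponding residue $a$ is nonzero. Both rely on fairly delicate properties of the Ore/MacLane Newton-polygon machinery for types, and are the content of the results cited from \cite[Sec.~4]{Guardia:2013:genetics}; once they are invoked, the rest of the argument is essentially bookkeeping.
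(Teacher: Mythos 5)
The paper does not prove this lemma itself: it points the reader to \cite[Sec.~4]{Guardia:2013:genetics}. So there is no internal argument to compare against; what can be judged is whether your reconstruction is sound and where the real weight of the argument lies.

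Your outline is essentially the right one, and the Newton-polygon bookkeeping at the last level is correct: once $\phi$ is known to be a valid representative of the truncation $\Trunc_{r_\p}(\type_\p)$, the $\phi$-expansion of $F_\p$ is $F_\p=\phi+R$ with $\deg R<n_\p$, so $\Newton_{v_{r_\p},\phi}(F_\p)$ has length one; Theorem~\ref{thm:type_F_props} then forces it to be one-sided of some slope $-\lambda$ with $e_\lambda^{-1}\cdot 1=\deg\Respol_{v_{r_\p},\phi,\lambda}(F_\p)$, which pins $e_\lambda=1$ (so $\lambda\in\Z_{>0}$) and makes the residual polynomial a monic degree-one power of an irreducible $\psi\ne y$, i.e.\ $y-a$ with $a\in\F_\p^*$. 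These consequences are routine once the hypotheses of Theorem~\ref{thm:type_F_props} are met.

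Where you misplace the difficulty: you single out the integrality of $\lambda$ and the nonvanishing of $a$ as the main obstacle, but those are the easy part. The genuinely nontrivial step is the one you slide over in your second paragraph, namely that the valuation bound $\wt[\p]{\phi}\ge\wt[\p]{\phi_\p}$ together with $\deg\phi=n_\p$ and monicity forces $\Respol_{r_\p}(\phi)=\psi_{r_\p}$ --- that is, that $\phi$ is a representative of $\Trunc_{r_\p}(\type_\p)$, so that $\type'_\p$ is a type at all and Theorem~\ref{thm:type_F_props} applies with $F=F_\p$. This is precisely the content of the Okutsu-approximation theory developed in the cited reference, and it is the statement that needs the ``delicate Newton-polygon machinery'' you refer to; the rest, including $\lambda\in\Z_{>0}$ and $a\ne 0$, falls out automatically. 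One further small point you should make explicit: the case $\phi=F_\p$ (where $R=0$ and the polygon degenerates) must be excluded; this is harmless since $\phi\in\oox$ while $F_\p$ is in general only in $\oovx$, but it deserves a sentence.
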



The paths corresponding to the different OM representations computed by the Montes algorithm form a tree $\tree$ of types. The leaves of the tree each represent one prime factor of $f$ in $\oo_{v}[x]$. The number of connected components of this tree (i.e. the number of root nodes) is in one-to-one correspondence with the set of prime factors of $\overline{f} = f \pmod{\m}$ in $\F[y]$. For instance:
\begin{align}
	\begin{aligned}
		\begin{tikzpicture}[scale=0.8]
			\filldraw (0, 0) circle (2.5pt);
			\filldraw (2, 0) circle (2.5pt);
			\filldraw (2, 0) circle (2.5pt);
			\filldraw (4, 0.7) circle (2.5pt);
			\filldraw (6, 1.4) circle (2.5pt);
			\filldraw (6, 0) circle (2.5pt);
			\filldraw (4, -0.7) circle (2.5pt);
			\filldraw (6, -0.7) circle (2.5pt);
			\filldraw (8, -0.7) circle (2.5pt);
			\draw (0, 0) -- (2, 0) -- (4, 0.7);
			\draw (2, 0) -- (4, -0.7) -- (6, -0.7);
			\draw[dotted] (4, 0.7) -- (6, 1.4);
			\draw[dotted] (4, 0.7) -- (6, 0);
			\draw[dotted] (6, -0.7) -- (8, -0.7);
			\draw[anchor=east] (-0.2, 0) node {$\psi_{0}$};
			\draw[anchor=west] (6.2, 1.4) node {$\type_{\p}$};
			\draw[anchor=west] (6.2, 0) node {$\type_{\q}$};
			\draw[anchor=west] (8.2, -0.7) node {$\type_{\l}$};
		\end{tikzpicture}
	\end{aligned}
\end{align}

In this example we have three prime ideals, which all share a common first level. Additionally, the prime ideals $\p$ and $\q$ share a common second level as well. Since the tree $\tree$ is connected, $\fbar$ is a power of the prime polynomial $\psi_{0}$ in $\F[y]$. The polynomials $F_{\p}$, $F_{\q}$ have Okutsu depth 2, while $F_{\l}$ has depth 3.

\begin{definition}
	For two prime ideals $\p, \q \in \pp$, the \emph{index of coincidence} $\ell = i(\p, \q)$ is the first different level of their respective types. More precisely, $\ell = 0$ if $\type_{\p}$ and $\type_{\q}$ have different root nodes. If $\psi_{0,\p} = \psi_{0,\q}$, then $\ell$ is minimal such that
	\begin{align*}
		(\phi_{\ell,\p}, \lambda_{\ell,\p}, \psi_{\ell,\p}) \ne (\phi_{\ell,\q}, \lambda_{\ell,\q}, \psi_{\ell,\q}).
	\end{align*}
\end{definition}

One advantage of OM representations of prime ideals, is that they yield explicit formulas for the $\p$-valuation of the $\phi$-polynomials at each level of the type $\type_{\q}$. The following proposition, extracted from \cite[Thm. 3.1]{Guardia:2012:hn} and \cite[Prop. 4.7]{Guardia:2013:newapp}, will be heavily used throughout the paper. It involves certain polynomials $\phi(\p, \q) \in \oox$ and certain \emph{hidden slopes} $\clam{\p}{\q}$, which are secondary data that have been conveniently stored along the running of the Montes algorithm \cite[Sec. 4]{Guardia:2013:newapp}.

\begin{proposition}
	\label{prop:explicit_valuations}
	Let $\p \in \pp$ be a prime ideal of $\oo_L$. Then for any $1 \leq i \leq r_{\p}+1$,
	\begin{align*}
		w_\p(\phi_{i,\p}(\theta)) &= \frac{V_{i,\p} + \lambda_{i,\p}}{e_{1,\p} \cdots e_{i-1,\p}},
	\end{align*}
	where  $V_{1,\p} = 0$ and $V_{i+1,\p} = e_{i,\p} f_{i,\p} ( e_{i,\p} V_{i,\p} + h_{i,\p} )$ for all $1 \le i \le r_{\p}$.
	
	Let $\q \in \pp$ be another prime ideal such that $\p \neq \q$ and with index of coincidence $\ell = i(\p, \q)$. For any $1 \leq i \leq r_{\q}+1$,
	\begin{align*}
		w_\p(\phi_{i,\q}(\theta)) &=
		\begin{cases}
			0,
				& \text{if } \ell = 0, \medskip\\
			\dfrac{V_{i} + \lambda_{i}}{\eto{i-1}},
				& \text{if }  i < \ell, \medskip\\
			\dfrac{V_{\ell} + \clam{\p}{\q}}{\eto{\ell-1}},
				& \text{if }  i = \ell \text{ and } \phi_{\ell, \q} = \phi(\p, \q), \medskip\\
			\dfrac{m_{i,\q}}{m_{\ell}} \cdot \dfrac{V_{\ell} + \minclam{\p}{\q}}{\eto{\ell-1}},
				& \text{otherwise}.
		\end{cases}
	\end{align*}
	
	In these formulas, we omit the subscript $\p$, $\q$ when the invariants of the two types coincide.
\end{proposition}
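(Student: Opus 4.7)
The plan is to prove each of the two displayed formulas separately, since Part~1 is the well-established valuation formula for the $\phi$-polynomials of a prime's own type, and Part~2 concerns the cross-valuations where two types diverge at an index of coincidence.

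For Part~1, I would proceed by induction on $i$. At $i=1$, the polynomial $\phi_{1,\p}$ is constructed so that its Newton polygon with respect to $v_\p$ has a single slope of magnitude $\lambda_{1,\p}$; together with $V_{1,\p}=0$ and the empty product $e_{1}\cdots e_{i-1}=1$, this gives the base case directly. For the inductive step, I would work with the $\phi_{i,\p}$-adic development determined by the previous level and appeal to the fundamental Newton-polygon principle built into the Montes construction: by design, $\phi_{i+1,\p}(\theta)$ picks up valuation $\lambda_{i+1,\p}$ on the scale of the level-$(i+1)$ residual polynomial, while the accumulated ramification through levels $1,\dots,i$ contributes the renormalisation $1/(e_{1}\cdots e_{i})$. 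The recursion $V_{i+1,\p}=e_{i,\p}f_{i,\p}(e_{i,\p}V_{i,\p}+h_{i,\p})$ is then exactly the bookkeeping identity recording how a slope measured at level $i$ (with denominator $e_i$ and degree scaling $e_if_i$) is translated into an absolute $w_\p$-valuation at level $i+1$.

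For Part~2, the structural idea is to exploit the path picture of $\tree$: the types $\type_\p$ and $\type_\q$ share their edges up to level $\ell-1$. For $i<\ell$, both $\phi_{i,\q}=\phi_{i,\p}$ and all the invariants agree, so Part~1 applied to $\p$ gives the formula verbatim. When $\ell=0$, the root residual polynomials $\psi_{0,\p}\ne\psi_{0,\q}$ are distinct and coprime in $\F[y]$, and a standard unique-factorisation-in-$\oo_v[x]$ argument forces $w_\p(\phi_{i,\q}(\theta))=0$ for every $i$. The core of the proof is level $i=\ell$, the first divergence: here one must invoke the hidden slope $\clam{\p}{\q}$, whose entire raison d'être is that it measures exactly the level-$\ell$ valuation of the canonical bridging polynomial $\phi(\p,\q)$, which yields the third case. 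For a different choice of $\phi_{\ell,\q}$ or for $i>\ell$, one reduces to the canonical case by a symmetry argument: the pair $(\phi_{\ell,\q}, \phi(\p,\q))$ (or the corresponding higher-level polynomial) lives in the residual disc at level $\ell$, and its $w_\p$-valuation is governed by the smaller of the two hidden slopes $\clam{\p}{\q}$, $\clam{\q}{\p}$, while the factor $m_{i,\q}/m_\ell$ arises because $\phi_{i,\q}(\theta)$ splits (over $\Kv$) into $m_{i,\q}/m_\ell$ Okutsu conjugates, each contributing the same level-$\ell$ valuation.

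The main obstacle will be the last case of Part~2, namely justifying the appearance of $\minclam{\p}{\q}$ and the multiplicative scaling by $m_{i,\q}/m_\ell$. This requires the symmetry $\clam{\p}{\q}=\clam{\q}{\p}$ whenever $\phi(\p,\q)=\phi(\q,\p)$, together with the stability lemma (Lemma~\ref{lem:other_type}) that lets me replace any $\phi_{\ell,\q}$ of the correct degree and sufficient closeness to $F_\q$ by the canonical bridge $\phi(\p,\q)$ without altering the type up to equivalence. Once those pieces are in place, the computation reduces to applying Part~1 inside the subtree rooted at level $\ell$, with the role of the slope $\lambda_\ell$ played by $\clam{\p}{\q}$ or $\minclam{\p}{\q}$ as appropriate. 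I would conclude by noting that the entire argument is a synthesis of \cite[Thm.~3.1]{Guardia:2012:hn} and \cite[Prop.~4.7]{Guardia:2013:newapp}, and only the translation between their notations and ours needs to be carried out with care.
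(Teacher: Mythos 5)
The paper never gives a self-contained proof of this proposition: it is explicitly described as \emph{extracted from} \cite[Thm.~3.1]{Guardia:2012:hn} and \cite[Prop.~4.7]{Guardia:2013:newapp}, and Remark~\ref{rmk:nonop_valuations}~(2) remarks that the whole statement ``is easily deduced from Theorem~\ref{thm:other_poly_value}.'' Your overall plan for Part~1 (induction along levels) and for Part~2 (case split on $i$ against $\ell$, with the hidden slopes carrying the content) is compatible with this, and you are right to acknowledge that the real work sits in the cited results.

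That said, two concrete assertions in your sketch are wrong and would derail a carried-out proof. First, you claim to need ``the symmetry $\clam{\p}{\q}=\clam{\q}{\p}$ whenever $\phi(\p,\q)=\phi(\q,\p)$.'' By Definition~\ref{def:cphi_hiddenslope} the greatest common $\phi$-polynomial is symmetric in $\p,\q$ by construction, so your hypothesis is always satisfied, and the alleged consequence $\clam{\p}{\q}=\clam{\q}{\p}$ is simply false in general: in the worked example of Section~\ref{sec:maxmin_example} one has $\clam{\p}{\q}=6$ and $\clam{\q}{\p}=4$. No such symmetry is required; the $\Min$ in the formula comes directly from Theorem~\ref{thm:other_poly_value}, where $\vt{g}=\dfrac{\deg g}{\deg\phi}\cdot\dfrac{V_{r+1}+\Min\{\lambda,\lambda'\}}{\eto{r}}$ already encodes $\minclam{\p}{\q}$ without any relation between the two hidden slopes. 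Second, you invoke Lemma~\ref{lem:other_type} as a ``stability lemma'' to replace an intermediate $\phi_{\ell,\q}$ by the bridge $\phi(\p,\q)$, but Lemma~\ref{lem:other_type} only concerns the top-level Okutsu approximation of degree $n_\p$; it says nothing about the level-$\ell$ $\phi$-polynomials. The replacement you actually want is handled by Theorem~\ref{thm:other_poly_value} applied with $\type=\Trunc_{\ell-1}(\type_\p)$, $\phi=\phi(\p,\q)$ and $g=\phi_{i,\q}$, where the prefactor $m_{i,\q}/m_\ell$ is exactly the degree ratio $\deg g/\deg\phi$ in that theorem; this is the deduction the paper's Remark~\ref{rmk:nonop_valuations}~(2) points to.
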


\subsection*{Okutsu bases of the integral closure of $\oo_v$}
\label{sec:okutsu_bases}

Let $\Lp$ be the completion of $L$ with respect to the $\p$-adic topology. We may consider a topological embedding $L \subset \Lp \subset \Kbar_v$, so that $\Lp$ may be identified to a finite extension of $\Kv$ of degree $n_{\p}$. We denote by $\oop$ the integral closure of $\oov$ in $\Lp$.

Let $r$ be the Okutsu depth of $F_{\p}$ and suppose that
\begin{align*}
	\type &= \left( \psi_{0}; (\phi_{1}, \lambda_{1}, \psi_{1}); \dots; (\phi_{r+1}, \lambda_{r+1}, \psi_{r+1}) \right),
\end{align*}
is the leaf corresponding to $F_{\p}$ in the tree of an OM factorisation of $f$. 

The Okutsu frame $[ \phi_{1}, \dots, \phi_{r} ]$ determines optimal polynomials $g_{0,\p}$, $\dots$, $g_{n_{\p}-1,\p}$ in $\oo[x]$ as follows. Each $0 \leq i < n_{\p}$ may be expressed in a unique way as:
\begin{align*}
	i &= a_{0} + a_{1} m_{1} + \cdots + a_{r} m_{r},		& 0 \leq a_{j} < m_{j+1}/m_{j} = e_{j} f_{j}.
\end{align*}
Thus, the polynomials:
\begin{align*}
	g_{i,\p} &:= x^{a_{0}} \prod_{j = 1} \phi_{j}^{a_{j}},		\qquad 0 \leq i < n_{\p},
\end{align*}
are monic polynomials in $\oo[x]$ of degree $\deg g_{i,\p} = i$.

The following result is due to Okutsu \cite{Okutsu:1982:i+ii}.

\begin{theorem}
	\label{thm:okutsu_basis_is_basis}
	For all $0 \leq i < n_{\p}$, the rational number $\wt[\p]{g_{i,\p}}$ is maximal amongst all monic polynomials in $\oovx$ of degree $i$.
\end{theorem}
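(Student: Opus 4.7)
The plan is to argue by induction on the Okutsu depth $r = r_\p$. The base case $r = 0$ reduces to $n_\p = 1$, where only $i = 0$ arises and $g_{0,\p} = 1$ is trivially optimal.

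For the inductive step, fix $i$ with $0 \le i < n_\p$ and write $i = q m_r + s$ uniquely with $0 \le s < m_r$ and $0 \le q < m_{r+1}/m_r = e_r f_r$. Then $g_{i,\p} = g_{s,\p} \cdot \phi_r^q$, where $g_{s,\p}$ is built entirely from $\phi_1, \dots, \phi_{r-1}$, and Proposition \ref{prop:explicit_valuations} gives the additive decomposition $w_\p(g_{i,\p}(\theta)) = q\, w_\p(\phi_r(\theta)) + w_\p(g_{s,\p}(\theta))$. The task is then to show $w_\p(h(\theta)) \le w_\p(g_{i,\p}(\theta))$ for every monic $h \in \oovx$ of degree $i$.

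Given such an $h$, I would form the $\phi_r$-adic expansion $h = \sum_{k=0}^q h_k \phi_r^k$, with $\deg h_k < m_r$ and $h_q$ monic of degree $s$. By the inductive hypothesis (applied to the truncated type of depth $r-1$, which controls all polynomials of degree less than $m_r$), $w_\p(h_k(\theta)) \le w_\p(g_{\deg h_k, \p}(\theta))$ for every $k$. Combined with the explicit value $w_\p(\phi_r(\theta)) = (V_r + \lambda_r)/(e_1 \cdots e_{r-1})$, this provides a term-by-term bound on $w_\p(h_k(\theta)\phi_r(\theta)^k)$, and the $\phi_r$-Newton polygon of $h$ --- whose principal slope is dictated by $\lambda_r$ --- would translate this into the desired bound $w_\p(h(\theta)) \le w_\p(g_{i,\p}(\theta))$.

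The main obstacle is the Newton-polygon step, because the ultrametric inequality by itself yields only a lower bound on the valuation of a sum. What must be ruled out is a scenario in which cancellations among the terms $h_k(\theta)\phi_r(\theta)^k$ conspire to raise $w_\p(h(\theta))$ beyond the inductive estimate. This is precisely the content of the MacLane--Okutsu theory behind $\type_\p$: because $\phi_r$ is itself optimal at degree $m_r$ and the residual polynomials $\psi_1, \dots, \psi_r$ forbid the dominant term of any $\phi_r$-adic expansion from being annihilated modulo higher powers of $\p$, the valuation of $h(\theta)$ is controlled by its $\phi_r$-Newton polygon exactly as required.
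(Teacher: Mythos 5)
The paper does not give a proof of this theorem; it simply cites Okutsu \cite{Okutsu:1982:i+ii}, so there is no internal argument to compare against. What can be assessed is whether your sketch would constitute a proof on its own, and there are genuine gaps.

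First, the inductive inequality $w_\p(h_k(\theta)) \le w_\p(g_{\deg h_k,\p}(\theta))$ is not a direct consequence of the induction hypothesis. The truncated type of depth $r-1$ is the Okutsu frame of $\phi_r$, so the induction hypothesis controls the valuation $w_{\phi_r}$ attached to (a root of) $\phi_r$, not $w_\p$. That these two valuations agree --- up to the normalising factor $m_r w_\p(\phi_r(\theta))/(V_r+\lambda_r)$ --- on all polynomials of degree $<m_r$ is itself a non-trivial fact of the GMN theory (a consequence of Proposition~\ref{prop:explicit_valuations} together with the fact that such polynomials ``see'' only the truncated type). Moreover, the coefficients $h_k$ with $k<q$ need not be monic; writing $h_k = c_k\tilde h_k$ with $v(c_k)\ge 0$ and $\tilde h_k$ monic, one gets $w_\p(h_k(\theta)) = v(c_k) + w_\p(\tilde h_k(\theta))$, so the bound degrades by $v(c_k)$. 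This does not sink the argument (those terms are not the dominant one anyway), but it does mean the inequality you state is literally false.

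Second, and more seriously, the step you identify as the ``main obstacle'' is precisely the content of the theorem, and your resolution is an appeal to authority rather than an argument. The ultrametric gives $w_\p\bigl(\sum_k h_k(\theta)\phi_r(\theta)^k\bigr) \ge \min_k w_\p\bigl(h_k(\theta)\phi_r(\theta)^k\bigr)$, which is the wrong direction: you need an \emph{upper} bound on $w_\p(h(\theta))$. The correct mechanism is not a Newton-polygon inequality but the exact formula coming from the augmented (MacLane) valuation $v_r$ attached to the truncated type: $v_r$ of a $\phi_r$-adic expansion equals the minimum of the term-by-term values by construction, and one must then prove that $w_\p(h(\theta))$ coincides with (the suitably normalised) $v_r(h)$ for all monic $h$ of degree $<n_\p$. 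The latter identity --- that $v_r$ computes $w_\p$ exactly on low-degree polynomials, so no extra cancellation can raise the value --- is the heart of the Okutsu/MacLane theory and requires its own proof via the residual polynomial operators and Theorem~\ref{thm:type_F_props}. Saying that the residual polynomials ``forbid the dominant term from being annihilated'' names the phenomenon but does not demonstrate it. As written, your argument is circular at this crucial point.
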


By Theorem \ref{thm:basis}, we get a reduced triangular $\oo_v$-basis of $\oop$ by taking the images under the embedding $L \subset L_{\p}$ of the elements:
\begin{align}
	\label{eq:redtri_basis}
	\alpha_{i} &:= \pi^{- \floor{w_{\p}(g_{i,\p}(\theta))}} g_{i,\p}(\theta),		\qquad 0 \leq i < n_{\p}.
\end{align}
We call $\left( \alpha_{i} \right)_{0 \leq i < n_{\p}}$ the \emph{Okutsu basis} of $\oop$, or simply the Okutsu $\p$-basis.

If $f$ is irreducible in $\oovx$, then $\pp = \set{\p}$ and $w = w_{\p}$. Theorems \ref{thm:basis} and \ref{thm:okutsu_basis_is_basis} show that in this case, \eqref{eq:redtri_basis} is a reduced triangular $v$-integral basis of $\ooL$. Thus, from now on, we may assume that $\#\pp > 1$.

For further purposes, the family of numerators of an Okutsu $\p$-basis is extended by adding an Okutsu approximation to $F_{\p}$,
\begin{align}
	\label{eq:okutsu_numerators}
	\Num_{\p} &:= \set{ 1 = g_{0,\p},\, \dots,\, g_{n_{\p}-1,\p},\, g_{n_{\p},\p} := \phi_{\p} }.
\end{align}


\begin{definition}
	Let $S \subseteq \pp$ be a subset of prime ideals of $\ooL$ and consider the Okutsu set
	\begin{align*}
		\Ok(S) = \set{ \prod_{\p \in S} g_{i_{\p},\p} : 0 \leq i_{\p} \leq n_{\p} }\subset\oo[x],
	\end{align*}
	of all polynomials that are a product of exactly one extended Okutsu $\p$-numerator for each $\p \in S$.
\end{definition}

By construction, all polynomials in $\Ok(S)$ are monic. Note that the Okutsu set depends on the choice of an Okutsu approximation $\phi_\p\approx F_\p$ for each $\p \in S$.  

%

\begin{theorem}
	\label{thm:okutsu_basis_comb_optimal}
	Let $h \in \oox$ be a monic polynomial of degree $0 \le i < n_{S}$. For appropriate choices of the Okutsu approximations $\phi_{\p}$, the set $\Ok(S)$ contains a polynomial $g$ of degree $i$ such that
	\begin{align*}
		\wt[\p]{g} \ge \wt[\p]{h}, \quad \forall\,\p\in S.
	\end{align*}
\end{theorem}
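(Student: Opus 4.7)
The plan is to proceed by induction on $\#S$, constructing the polynomial $g \in \Ok(S)$ of degree $i$ explicitly from $h$. The base case $\#S = 1$ is essentially Theorem \ref{thm:okutsu_basis_is_basis}: when $S = \{\p\}$, the set of extended Okutsu numerators already contains a polynomial $g_{i,\p}$ of degree $i < n_\p$ that maximises $w_\p$ among all monic polynomials in $\oovx$ of degree $i$, so one may take $g = g_{i,\p}$ directly.

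For the inductive step, I fix a prime $\p \in S$ and set $S' = S \setminus \{\p\}$. The aim is to identify an index $i_\p$ with $0 \le i_\p \le n_\p$ and a factorisation of $h$ in $\oovx$ (which is a UFD) into a product $h = h_\p \cdot h'$, where $h_\p$ is monic of degree $i_\p$ and absorbs essentially all the $w_\p$-content of $h$, while $h'$ is monic of degree $i - i_\p < n_{S'}$ and has negligible $w_\p$-valuation. Concretely, one groups the irreducible factors of $h$ in $\oovx$ according to whichever $F_\q$, $\q \in S$, they are closest to (measured through the types $\type_\q$ and Proposition \ref{prop:explicit_valuations}), letting $h_\p$ collect the factors assigned to $\p$. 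Applying the inductive hypothesis to $h'$ with the smaller set $S'$ yields $g' = \prod_{\q \in S'} g_{i_\q,\q} \in \Ok(S')$ with $w_\q(g'(\theta)) \ge w_\q(h'(\theta))$ for every $\q \in S'$, and the candidate is $g := g_{i_\p,\p} \cdot g'$, which lies in $\Ok(S)$ and has degree $i$.

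The decisive step is to verify the inequalities $w_\q(g(\theta)) \ge w_\q(h(\theta))$ at every $\q \in S$. At $\q = \p$, optimality of $g_{i_\p,\p}$ among monic degree-$i_\p$ polynomials combines with the inductive bound on $g'$ versus $h'$ to give the inequality, provided $h'$ really has been stripped of its $\p$-content. At $\q \in S'$ the ``cross term'' $w_\q(g_{i_\p,\p}(\theta))$ must compensate for the loss of $h_\p$ at $\q$. This is exactly where Proposition \ref{prop:explicit_valuations} is indispensable: it provides closed formulas for $w_\q(\phi_{j,\p}(\theta))$ in terms of the index of coincidence $i(\q,\p)$, the hidden slopes $\clam{\q}{\p}$, and the invariants $V_{j,\p}$, which let us match the $w_\q$-value of each Okutsu factor entering $g_{i_\p,\p}$ against the contribution at $\q$ of the corresponding factor of $h_\p$, reasoning root by root over $\Kvbar$.

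The main obstacle is the clause \emph{for appropriate choices of the Okutsu approximations $\phi_{\p}$}. The single-prime optimality of $g_{i,\p}$ is unconditional, but to obtain a clean factorisation $h = h_\p \cdot h'$ isolating the $\p$-content, and to ensure every cross-valuation at each $\q \in S'$ matches the loss of $h_\p$ at $\q$, one needs each $\phi_\p$ to be a sufficiently close Okutsu approximation to $F_\p$. Lemma \ref{lem:other_type} legitimises sharpening $\phi_\p$ to any prescribed accuracy without altering the rest of the type, so the technical heart of the proof will be to quantify how close each $\phi_\p$ must be to $F_\p$ as a function of the slope data $\lambda_{i,\p}$ and the hidden slopes $\clam{\p}{\q}$, so that a single choice of approximations is adequate for every monic $h$ of degree $i < n_S$.
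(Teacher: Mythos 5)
Your proposal differs structurally from the paper, which splits the theorem into two propositions: Proposition \ref{thm:optimal_in_phi} (induction on the \emph{degree} of $h$, reducing to prime polynomials and working in the non-optimised tree) first replaces an arbitrary monic $h$ by a polynomial in the semigroup $\Phi(\pp)$ with no worse valuations, and only then Proposition \ref{prop:okutsu_basis_comb_optimal} (an iterative reordering of $\p$-parts, not an induction on $\#S$ per se) turns that $\Phi$-product into a polynomial of $\Ok(S)$. You collapse these two stages into a single induction on $\#S$, and in doing so you run into several concrete difficulties.

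First, your factorisation $h = h_\p\cdot h'$ ``grouping the irreducible factors of $h$ by whichever $F_\q$ they are closest to'' is not a well-defined or exhaustive partition: a prime factor of $h$ in $\oovx$ may be divisible only by an intermediate node $\type$ of the tree shared by several leaves, or by no branch of the tree at all, and nothing forces it to be assigned to a unique $\p \in S$. Second, and more seriously, you assume an index $i_\p$ with $0 \le i_\p \le n_\p$ can be read off from this factorisation, but the total degree of the factors of $h$ ``close to $F_\p$'' can easily exceed $n_\p$; this is precisely the \emph{disorder} problem (Definition \ref{def:disorder}) that Step 3 of Proposition \ref{prop:okutsu_basis_comb_optimal} has to treat by transferring $\phi$-factors to a neighbouring prime $\l$ chosen via the extended index of coincidence. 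Your proposal simply assumes this never happens.

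Third, the inductive step does not actually close at $\q=\p$. Applying the hypothesis to $h'$ and $S'$ gives $\w[\q]{g'(\theta)}\ge\w[\q]{h'(\theta)}$ for $\q\in S'$ only; it says nothing about $\w[\p]{g'(\theta)}$ versus $\w[\p]{h'(\theta)}$. These are cross-valuations of polynomials ``outside $\p$'', and they need not coincide: by Lemma \ref{lem:mostly_increasing_cross_values} the degree-adjusted $w_\p$-valuation of a $\phi$-polynomial of $\q$ depends on its depth, and a prime factor of $h'$ at a shallow node can have strictly smaller degree-adjusted $w_\p$-value than the corresponding Okutsu factor in $g'$ — but it can also be \emph{larger} once one crosses the index of coincidence, which is the pathology \eqref{eq:badq} that Lemmas \ref{lem:safeq} and \ref{lem:greater_cross_value} are designed to control. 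Without invoking these (or Lemma \ref{lem:nop_tree_vals}, which is what makes the paper's reduction to $\Phi(\pp)$ actually work), ``the cross term must compensate'' remains an unsubstantiated assertion. In short, the base case and the general strategy are sound, but the inductive step as written does not go through; the missing ingredients are essentially the content of Section \ref{sec:proof_opt_polys}.
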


This is one of the main results of the paper, whose proof will be postponed to Section \ref{sec:proof_opt_polys}.

Up to finding the right Okutsu approximations, Theorems \ref{thm:basis} and \ref{thm:okutsu_basis_comb_optimal} show that we may find a reduced triangular basis of $\ooL$ just by finding polynomials of degree $0, 1, \dots, n-1$, with maximal $w$-value in the finite set $\Ok(\pp)$ (c.f. Theorem \ref{thm:maxmin_s_basis}).

Note that a brute force algorithm testing all possible factors $g_{i_{\p},\p}$ of the Okutsu bases leading to polynomials of a fixed degree $i$ would be exponential.

A simple and very efficient algorithm, presented in Section \ref{sec:maxmin}, can be employed to choose an optimal combination of basis numerators for each degree $i$.

\subsection*{Okutsu bases of fractional ideals}

Let $I$ be a non-zero fractional ideal of $\ooL$,
\begin{align*}
	I &= \prod_{\p \in \pp} \p^{a_{\p}}
\end{align*}

We consider a map giving a shifted valuation for a prime ideal $\p$ as a factor of the fractional ideal $I$:
\begin{align*}
	\begin{aligned}
		w_{\p,I} : L &\longrightarrow \Q \cup \{\infty\}, \\
		\alpha &\longmapsto \w[\p,I]{\alpha} = \w[\p]{\alpha} - a_{\p}/e(\p/\m).
	\end{aligned}
\end{align*}
Also, for a subset $S \subseteq \pp$ we define:
\begin{align*}
	\w[S,I]{\alpha} = \Min\set{ \w[\p,I]{\alpha} : \p \in S }.
\end{align*}
Note that an element $\alpha \in L$ belongs to $I$ if and only if $\w[\pp,I]{\alpha} \ge 0$.

Clearly these maps are consistent with the functions given in Definition \ref{def:wp}, as $w_{\p} = w_{\p,\ooL}$ and $w = w_{\pp,\ooL}$.

\begin{definition}
	Let $I$ be a fractional ideal of $\ooL$. Let $S \subseteq \pp$ be a subset of prime ideals of $\ooL$ and denote $n_{S} := \sum_{\p \in S} n_{\p}$. An Okutsu $S$-basis of $I$ is a triangular family
	\begin{align*}
		g_{0}(\theta)/\pi^{\floor{\nu_{0}}}, \dots, g_{n_{S}-1}(\theta)/\pi^{\floor{\nu_{n_{S}-1}}},
	\end{align*}
	with numerators $g_{i}$ in $\Ok(S)$ of degree $i$, such that $\nu_{i} = \wt[S,I]{g_{i}} \ge \wt[S,I]{h}$ for all monic polynomials $h \in \oovx$ of degree $i$, and for all $0 \le i < n_{S}$.
\end{definition}

A result analogous to Theorem \ref{thm:basis} holds: an Okutsu $\pp$-basis of $I$ is a reduced triangular basis of $I$ as an $\oo$-module. Also, Theorem \ref{thm:okutsu_basis_comb_optimal} holds if we replace $w_{\p}$ with $w_{\p,I}$, because both functions differ only in a constant shift. Therefore, just as for the maximal order, it makes sense to compute an Okutsu $S$-basis of $I$ by looking for polynomials in $\Ok(S)$ with a maximal $w_{S,I}$-value amongst all polynomials in $\Ok(S)$ of a given degree. The $\mm$ algorithm serves this purpose.

\section{MaxMin}
\label{sec:maxmin}

\subsection{Formal extension of the Okutsu $\p$-bases}
\label{sec:formal_extension_okutsu_p_basis}

The aim of the $\mm$ algorithm is, given a set of prime ideals $S \subseteq \pp$ and a fractional ideal $I$, to perform an efficient search for $w_{S,I}$-optimal polynomials in $\Ok(S)$.

To decide which numerators are chosen for each degree, we need only to know the values $w_\q(g_{i_{\p},\p}(\theta))$ for all $\p, \q \in S$ and $0 \leq i_{\p} \leq n_{\p}$. As presented in Section \ref{sec:montes}, these values are given by invariants present in an OM factorisation $\tree$ of $f$. The exception is $\w[\p]{\phi_{\p}}$, which can be arbitrarily large, depending on the choice of $\phi_{\p}$ the Okutsu approximation to $F_{\p}$.

For this reason, we do not choose a concrete polynomial $\phi_{\p}$ beforehand, but rather run the algorithm as if $w_\p(\phi_{\p}(\theta))$ (formally) takes the value $\infty$. 

\begin{definition}
	For all $\p \in S$ we define the following function on the Okutsu set:
	\begin{align*} 
		w_{\p}:\  \Ok(S) &\longrightarrow \Q \cup \set{ \infty },\quad
		g \longmapsto \begin{cases}
			w_\p(g(\theta)),	& \text{if } \phi_{\p} \nmid g, \\
			\infty,			& \text{if } \phi_{\p} \mid g.
		\end{cases}
	\end{align*}
\end{definition}

This function does not depend on the choice of the Okutsu approximations $\phi_{\p}$ as by Lemma \ref{lem:other_type} and  Proposition \ref{prop:explicit_valuations} the value of $\wt[\q]{\phi_{\p}}$ for $\q \neq \p$ only depends on $\p, \q$ and not the choice of $\phi_{\p}$. Thus, it makes sense to consider symbolic polynomials $\phi_{\p}$ of degree $n_{\p}$. 

We consider a similar extension of the functions $w_{\p,I}$, $w_{S,I}$ to the Okutsu set:
\begin{align*}
	\w[p,I]{g} := \w[\p]{g} - a_{\p} / \epp, \qquad
	\w[S,I]{g} := \Min\set{\w[\p,I]{g} : \p \in S}.
\end{align*}
We have $\w[S,I]{g} < \infty$ for all $g \in \Ok(S)$ with the exception of a single polynomial $\phi_{S} := \prod_{\p \in S} \phi_{\p}$. Also, $\w[S,I]{g} \ge \wt[S,I]{g}$, and equality holds for adequate choices of all $\phi_{\p}$ (depending on the given polynomial $g \in \Ok(S)$, $g \ne \phi_{S}$).

The algorithm will provide a recipe to construct polynomials $g_{i} \in \Ok(S)$ of degree $i$ with a maximal value of $\w[S,I]{g_{i}}$ among all polynomials of degree $i$ in $\Ok(S)$. The corresponding member of the triangular basis will be
\begin{align*}
	\alpha_{i} &= g_{i}(\theta) \pi^{-\lfloor \w[S,I]{g_{i}} \rfloor},	\qquad 0 \leq i < n_{S}.
\end{align*}

For a practical computation of $\alpha_{i}$, we must apply the Single-Factor Lifting algorithm \cite{Guardia:2011:sfl} to find concrete Okutsu approximations $\phi_{\p}$, with a value $\wt[\p]{\phi_{\p}}$ large enough to guarantee that $\w[S,I]{g_{i}} = \wt[S,I]{g_{i}}$ for all $0 \leq i < n_{S}$.

\subsection{The MaxMin algorithm}
\label{sec:maxmin_algorithm}

For each type $\type$ in the tree $\tree$, we denote by $S_{\type} \subseteq S$ the subset of prime ideals $\p \in S$ such that $\type$ is one of the nodes in the path joining the leaf $\type_{\p}$ with its root node.

We fix an ordering $S = \set{\p_{1}, \dots, \p_{s}}$ on the set $S$, with the property that for all types $\type$ in $\tree$, the subset $S_{\type} \subseteq S$ is an interval of $S$. That is, there exist indices $1 \leq a_{\type} \leq b_{\type} \leq s$ such that,
\begin{align}
	\label{eq:ordering}
	S_{\type} = [a_{\type}, b_{\type}] := \set{ \p_{j} : a_{\type} \leq j \leq b_{\type} }.
\end{align}

As the branches of $\tree$ do not cross one-another, the reader will easily be convinced that it is always possible to consider such an ordering.

We consider multi-indices $\ii = (i_\p)_{\p \in S}$ of degree $\deg \ii := \sum_{\p} i_{\p}$, leading to monic polynomials $g_{\ii} := \prod_{\p} g_{i_{\p},\p} \in \oox$ in the Okutsu set $\Ok(S)$, with $\deg g_{\ii} = \deg \ii$.

\begin{definition}
	A multi-index $\ii = (i_{\p})_{\p \in S}$ is said to be \emph{maximal} if
	\begin{align*}
		\w[S,I]{g_{\ii}} \geq \w[S,I]{g_{\jj}},
	\end{align*}
	for all multi-indices $\jj$ with $\deg \jj = \deg \ii$.
	
	In this case, we also say that $g_{\ii}$ is a \emph{maximal numerator}.
\end{definition}

\begin{notation}
	For $1 \leq j \leq s$ we denote by $\uu_{j}$ the multi-index with coordinates $i_{\p} = 0$ for all $\p \neq \p_{j}$ and $i_{\p_{j}} = 1$.
\end{notation}

\begin{algorithm}[h]
	\caption{$\mm[S]$ algorithm}
	\label{alg:maxmin_s}
	\begin{algorithmic}[1]
		\INPUT A fractional ideal $I$ of $\ooL$ and Okutsu numerators $\set{g_{i,\p} : 0 \leq i \leq n_{\p}}$ of $\p$-bases for each $\p \in S$.
						
		\OUTPUT A family $\ii_{0}, \ii_{1}, \dots, \ii_{n_{S}} \in \N^{s}$ of multi-indices of degree $0, 1, \dots, n$ respectively.
				
		\STATE $\ii_{0} \gets (0, \dots, 0)$
		
		\FOR {$k = 0 \to n_{S}-1$}
		
			\STATE $j \gets \Min\set{ 1 \leq i \leq s : \w[\p_{i},I]{g_{\ii_{k}}} = \w[S,I]{g_{\ii_{k}}}}$
			
			\STATE $\ii_{k+1} \gets \ii_{k} + \uu_{j}$
			
		\ENDFOR
	\end{algorithmic}
\end{algorithm}

The next result is the second fundamental result in the paper. Its proof will be given in Section \ref{sec:proof_maxmin}.

\begin{theorem}
	\label{thm:maxmin_works}
	All output multi-indices of $\mm$ are maximal. 
\end{theorem}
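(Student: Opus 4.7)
My plan is to prove Theorem \ref{thm:maxmin_works} by induction on $k$, showing that each $\ii_k$ produced by the algorithm is maximal. The base case $k = 0$ is immediate, as there is a unique multi-index of degree $0$. For the inductive step, I assume $\ii_k$ is maximal and that $j$ is the index chosen by the algorithm, so $\ii_{k+1} = \ii_k + \uu_j$, and I must show that for every multi-index $\jj$ with $\deg \jj = k+1$, $\w[S,I]{g_{\jj}} \le \w[S,I]{g_{\ii_{k+1}}}$.

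The first step is to record a structural observation: since $g_{\mathbf{m}} = \prod_\p g_{i_\p,\p}$ is a product over $\p \in S$, for any multi-index $\mathbf{m}$ and any $j'$ the ``increment''
\begin{align*}
	D_{\p, j'}(\mathbf{m}) := \w[\p]{g_{\mathbf{m} + \uu_{j'}}} - \w[\p]{g_{\mathbf{m}}} = \w[\p]{g_{i_{\p_{j'}}+1, \p_{j'}}} - \w[\p]{g_{i_{\p_{j'}}, \p_{j'}}}
\end{align*}
depends only on the $\p_{j'}$-coordinate of $\mathbf{m}$. Combining this with the explicit formulas in Proposition \ref{prop:explicit_valuations}, one obtains closed-form expressions for $D_{\p,j'}$ in terms of the Okutsu frame of $\p_{j'}$ and the index of coincidence $i(\p,\p_{j'})$.

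The heart of the argument is then an exchange lemma. Take an arbitrary $\jj$ of degree $k+1$ and write $\jj = \jj' + \uu_{j'}$ with $\deg \jj' = k$. By induction, $\w[S,I]{g_{\jj'}} \le \w[S,I]{g_{\ii_k}}$. I would proceed in two moves: first reduce to the case $\jj' = \ii_k$, by iteratively swapping out coordinates of $\jj'$ against those of $\ii_k$ and using the inductive hypothesis together with the coordinatewise nature of $D_{\p, j'}$ to show each swap is non-decreasing in $\w[S,I]$; second, compare $\w[S,I]{g_{\ii_k + \uu_{j'}}}$ with $\w[S,I]{g_{\ii_k + \uu_j}}$ directly. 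For this second comparison the algorithm's selection rule is used: $\w[\p_j, I]{g_{\ii_k}}$ realises the $S$-minimum, so incrementing at $\p_j$ increases the minimum by exactly $D_{\p_j, j}(\ii_k)$, whereas incrementing at any $\p_{j'}$ with $\w[\p_{j'},I]{g_{\ii_k}} > \w[S,I]{g_{\ii_k}}$ cannot raise the overall minimum beyond $\w[\p_j,I]{g_{\ii_k}}$.

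The main obstacle I anticipate is the detailed bookkeeping in Proposition \ref{prop:explicit_valuations}: there are four cases for $\w[\p]{\phi_{i,\q}(\theta)}$, and one must track how the ``hidden slopes'' $\clam{\p}{\q}$ and the Okutsu frame data combine when $\p, \q$ share several levels of their types. The interval property \eqref{eq:ordering} of the ordering on $S$ should be crucial here: primes sharing a high-level node of $\tree$ are consecutive, so the minimum-index tie-breaking rule prefers the ``leftmost'' branch and thereby respects the nested structure of common levels. Verifying that this rule interacts compatibly with the explicit valuation formulas, so that the exchange argument goes through without residual ambiguity, is where most of the technical work will lie.
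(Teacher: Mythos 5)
Your proposal takes a genuinely different route from the paper---induction on the iteration count $k$, combined with an exchange lemma---whereas the paper proceeds by induction on $\#S$, reducing first to a block-wise algorithm $\mm[S;\ml]$, then splitting $S$ into two intervals $U \cup D$ that admit precomputation (via a case analysis on the tree structure), and finally analysing the resulting two-coordinate problem directly in terms of the auxiliary quantities $\nu_i$, $\nu'_j$. Unfortunately, your sketch has gaps that go beyond bookkeeping.

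First, the claim that ``incrementing at $\p_j$ increases the minimum by exactly $D_{\p_j, j}(\ii_k)$'' is false: incrementing the $\p_j$-coordinate changes $w_\p(g_{\ii_k+\uu_j})$ for every $\p$, not just for $\p_j$, and if another prime $\p_{j''}$ is also a minimiser at $\ii_k$ with a smaller increment $D_{\p_{j''},j}$, then the new minimum is realised at $\p_{j''}$, not $\p_j$. Second, the claim that incrementing at a non-minimiser $\p_{j'}$ ``cannot raise the overall minimum beyond $\w[\p_j,I]{g_{\ii_k}}$'' is also false: the $\p_j$-valuation itself increases by $D_{\p_j,j'}\ge 0$, so the new minimum is bounded only by $\w[\p_j,I]{g_{\ii_k}}+D_{\p_j,j'}$. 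Third, and most seriously, the reduction ``iteratively swap coordinates of $\jj'$ against those of $\ii_k$, each swap non-decreasing'' is unjustified: the inductive hypothesis gives only the single global inequality $\w[S,I]{g_{\jj'}}\le\w[S,I]{g_{\ii_k}}$, which gives no control over individual swaps. The paper's own remark in Section~\ref{sec:maxmin_algorithm} under ``Initial conditions'' is essentially a warning against exactly this line of reasoning: it exhibits two distinct maximal multi-indices of degree $3$ with \emph{identical} $w_S$-vectors, yet single-coordinate increments from one lead only to non-maximal multi-indices of degree $4$, while the algorithm (following the other) succeeds. This shows that maximality at degree $k$ alone is not a strong enough invariant; one needs structural information about \emph{which} maximal multi-index the algorithm holds, and this is precisely what the block structure (Lemmas~\ref{lem:m_ell} and~\ref{lem:maxmin_blocks}) and the precomputation machinery (Lemmas~\ref{lem:precomputation} and~\ref{lem:st_precomputable}) supply in the paper's proof. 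Without recovering that structure in your inductive invariant, the exchange argument cannot close.
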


This gives the name $\mm$ for the algorithm, because it finds the maximal value amongst the minima of certain numerical data. This provides a computation of a reduced triangular basis  as follows.

\begin{theorem}
	\label{thm:maxmin_s_basis}
	Let $\ii_{0}, \ii_{1}, \dots, \ii_{n_{S}}$ be an output of $\mm$. Choose Okutsu approximations $\phi_{\p}$ of all $\p \in \pp$, such that
	\begin{align*}
		\w[S,I]{g_{\ii_{k}}} &= \wt[S,I]{g_{\ii_{k}}},	& 0 \leq k < n_{S}.
	\end{align*}
	
	Then, $g_{\ii_{0}}, g_{\ii_{1}}, \dots, g_{\ii_{n-1}}$ are numerators of an Okutsu $S$-basis of $I$.
\end{theorem}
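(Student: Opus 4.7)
The plan is to combine Theorem~\ref{thm:maxmin_works} (maximality of $g_{\ii_k}$ inside $\Ok(S)$ for the formal valuation $\w[S,I]$) with Theorem~\ref{thm:okutsu_basis_comb_optimal} (combinatorial optimality of $\Ok(S)$ against arbitrary monic polynomials in $\oovx$).

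First, I would dispense with the structural bookkeeping. Since $\ii_0 = (0, \ldots, 0)$ has degree $0$ and each iteration replaces $\ii_k$ by $\ii_{k+1} = \ii_k + \uu_j$ for some $j$, the multi-index $\ii_k$ has degree exactly $k$. By construction $g_{\ii_k} \in \Ok(S)$ is monic of degree $k$, so the family $(g_{\ii_0}, \ldots, g_{\ii_{n_S-1}})$ already has the triangular shape demanded by the definition of an Okutsu $S$-basis.

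Fix now $0 \le k < n_S$ and an arbitrary monic $h \in \oovx$ of degree $k$. I must show $\wt[S,I]{g_{\ii_k}} \ge \wt[S,I]{h}$. The shifted valuation $w_{\p,I}$ differs from $w_\p$ only by the constant $-a_\p / e(\p/\m)$, so Theorem~\ref{thm:okutsu_basis_comb_optimal} transfers verbatim and provides, for a suitable choice of Okutsu approximations, a polynomial $g \in \Ok(S)$ of degree $k$ with $\wt[\p,I]{g} \ge \wt[\p,I]{h}$ for every $\p \in S$; hence $\wt[S,I]{g} \ge \wt[S,I]{h}$. Writing $g = g_\jj$ for a multi-index $\jj$ of degree $k$, Theorem~\ref{thm:maxmin_works} gives $\w[S,I]{g_{\ii_k}} \ge \w[S,I]{g_\jj}$. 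Combining this with the hypothesis $\w[S,I]{g_{\ii_k}} = \wt[S,I]{g_{\ii_k}}$ and the universal bound $\w[S,I]{g_\jj} \ge \wt[S,I]{g_\jj}$ (the formal $\w[\p,I]$ only replaces finite values by $\infty$ when $\phi_\p$ divides the polynomial), one gets the chain
\begin{align*}
	\wt[S,I]{g_{\ii_k}} \;=\; \w[S,I]{g_{\ii_k}} \;\ge\; \w[S,I]{g_\jj} \;\ge\; \wt[S,I]{g_\jj} \;\ge\; \wt[S,I]{h},
\end{align*}
which is exactly the optimality condition in the definition of an Okutsu $S$-basis.

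The main obstacle is producing a single family of Okutsu approximations $\{\phi_\p\}_{\p \in S}$ that simultaneously satisfies the hypothesis of this theorem and supports the application of Theorem~\ref{thm:okutsu_basis_comb_optimal} in the argument above. Both demands are monotone in the quality of the approximations: they are preserved and eventually enforced as each $\vt{\phi_\p}$ grows, because raising $\vt{\phi_\p}$ can only pull the actual $\wt[\p,I]{g}$ closer to its formal counterpart and only sharpen the inequalities of Theorem~\ref{thm:okutsu_basis_comb_optimal}. Since only the finitely many polynomials $g_{\ii_0}, \ldots, g_{\ii_{n_S-1}}$ and the finitely many degrees $k < n_S$ are involved, the Single Factor Lifting algorithm of \cite{Guardia:2011:sfl} lets us refine each $\phi_\p$ enough for both conditions to hold at once, exactly as outlined in Section~\ref{sec:formal_extension_okutsu_p_basis}.
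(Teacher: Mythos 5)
Your proof is correct and follows essentially the same route as the paper: both combine Theorem~\ref{thm:okutsu_basis_comb_optimal} to dominate an arbitrary monic $h$ by some $g\in\Ok(S)$, and Theorem~\ref{thm:maxmin_works} to dominate that $g$ by $g_{\ii_k}$ via the formal valuation, closing the chain $\wt[S,I]{g_{\ii_k}} = \w[S,I]{g_{\ii_k}} \ge \w[S,I]{g} \ge \wt[S,I]{g} \ge \wt[S,I]{h}$. Your extra paragraph about simultaneously choosing Okutsu approximations merely expands on a point the paper handles with the parenthetical ``for adequate choices of all $\phi_\p$''.
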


\begin{proof}
	Let $h \in \oovx$ be a monic polynomial of degree $0 \le k < n_{S}$. By Theorem \ref{thm:okutsu_basis_comb_optimal}, there exists $g \in \Ok(S)$ (for adequate choices of all $\phi_{\p}$ for $\p \in S$) of degree $k$ such that $\wt[S,I]{g} \ge \wt[S,I]{h}$.
	
	On the other hand, regardless of the choices of the $\phi_{\p}$, we have $\w[S,I]{g_{\ii_{k}}} \ge \w[S,I]{g}$ by the maximality of $\ii_{k}$. Hence,
	\begin{align*}
		\wt[S,I]{g_{\ii_{k}}} &= \w[S,I]{g_{\ii_{k}}} \ge \w[S,I]{g} \ge \wt[S,I]{g} \ge \wt[S,I]{h}.
		\qedhere
	\end{align*}
\end{proof}

We will now present some remarks about the behaviour of the algorithm. We assume $I = \ooL$ for simplicity.

\subsubsection{Guaranteed termination}

$\mm$ always terminates after exactly $n_{S}$ iterations.

Thanks to the convention $w_\p(\phi_{\p}) = \infty$, the index $j$ in step 3 indicates a prime $\p_{j}$ such that for the multi-index $\ii_{k} = (i_{\p})_{\p \in S}$, we will always have $i_{\p_{j}} < n_{\p_{j}}$. Thus, the next multi-index $\ii_{k+1} = (i'_{\p})_{\p \in S}$ constructed in step 4 has indices $i'_{\p} \leq n_{\p}$ for all $\p$.

Furthermore, the first and last output multi-indices are $\ii_{0} = (0, \dots, 0)$ and $\ii_{n_{s}} = (n_{\p_{1}}, \dots, n_{\p_{s}})$. As such, $g_{\ii_{0}} = 1$ and $g_{\ii_{n_{S}}} = \prod_{\p \in S} \phi_{\p}$.

\subsubsection{Polynomial products are not computed}

The algorithm does not compute the products $g_{\ii_{k}}$. It only computes the values $w_\p(g_{\ii_{k}})$ for all $\p \in S$, which are determined by the 3-dimensional array of data $w_{\p_{k}}(g_{j_i,\p_{i}})$ indexed by $i$, $j_i$, and $k$ in the ranges $1 \leq i \leq s$, $0 \leq j_{i} \leq n_{\p_{i}}$, and $1 \leq k \leq s$, respectively.

\subsubsection{MaxMin is not a universal maximiser}
If the numbers $w_{\p_{k}}(g_{j_i,\p_{i}})$ are replaced by arbitrary, non-negative rational numbers $\nu_{k, j_{i}, i} \in \Q_{>0}$ and we take
\begin{align*}
	\nu_{k, \ii} := \sum_{i = 1}^{s} \nu_{k, j_{i}, i},
\end{align*}
with $\ii = (j_{i})_{1 \leq i \leq s}$ a multi-index as above, the $\mm$ routine may fail to compute
\begin{align*}
	\Max\set{ \Min\set{ \nu_{k, \ii} : 1 \leq k \leq s } : \deg \ii = d },
\end{align*}
a maximal multi-index of degree $d$.

\subsubsection{Initial conditions}

Suppose $\ii = (i_{\p})_{\p \in S}$ is a multi-index with degree $\deg \ii = d$, such that $\w[S,I]{g_{\ii}}$ is maximal amongst all multi-indices of degree $d$. Then, it may not be true that by increasing an adequate index by one, we get a multi-index $\jj$, of degree $d+1$, which renders a maximal value of $w(g_{\jj})$ amongst all multi-indices of degree $k+1$.

For instance, let us consider the example presented in Section \ref{sec:maxmin_example}. The output index of degree 3 is $\ii_{3} = (1, 2, 0)$, resulting in the polynomial $g_{3} = \phi_{1,\p} \phi_{1,\q}^{2}$ with valuation vector $\wv{g_{3}} = (18, 12, 12)$ for $w_{\p}$, $w_{\q}$ and $w_{\l}$ respectively.

We could choose an alternative index $\jj_{3} = (1, 1, 1)$ which would give a polynomial $g'_{3} = \phi_{1,\p} \phi_{1,\q} \phi_{1,\l}$ with the exact same valuations $\wv{g'_{3}} = (18, 12, 12)$. However, none of the indices $(2, 1, 1)$, $(1, 2, 1)$, $(1, 1, 2)$ is maximal. For instance, $\jj_{4} = (1, 2, 1)$ determines the polynomial $g'_{4} = \phi_{1,\p} \phi_{1,\q}^{2} \phi_{1,\l}$ with valuations $\wv{g'_{4}} = (24, 16, 16)$. This is clearly not maximal as the polynomial constructed by MaxMin $g_{4} = \phi_{1,\p} \phi_{2,\q}$ has valuations $\wv{g_{4}} = (18, 22, 21)$.

It is remarkable that the extremely simple strategy that $\mm$ employs to choose successive maximal multi-indices is able to avoid these pathological cases.

\subsubsection{Ordering of input prime ideals}

Theorem \ref{thm:maxmin_works} shows that $\mm$ produces a sequence of maximal multi-indices regardless of the choice of ordering on $S$, as long as it satisfies \eqref{eq:ordering}. However, the numerators $g_{\ii_{k}}$ produced from these multi-indices do depend on the choice of ordering.

\subsubsection{Complexity}

To compute a triangular $v$-integral basis of $L$, a number of steps are required:
\begin{enumerate}
	\item Use the Montes algorithm to produce an OM representation $\tree$ of $f$.
	\item Run $\mm[\pp]$ to generate a family of maximal indices $\ii_{0}$, $\dots$, $\ii_{n-1}$.
	\item Apply the Single Factor Lifting algorithm from \cite{Guardia:2011:sfl} to get an adequate improvement of the Okutsu approximation of each prime factor of $f$.
	\item Compute the numerators of the Okutsu basis $g_{0}, \dots, g_{n-1}$ specified by the maximal indices.
	\item Divide the Okutsu numerators by the appropriate power of $\pi$ to create an integral basis.
\end{enumerate}

The total complexity is equivalent to that of other OM based routines and is given in the following result.

\begin{theorem}[{\cite[Thm. 6.1]{Stainsby:2014:thesis}}]
	\label{thm:total_complexity}
	Suppose that $\F$ is a finite field with $q$ elements and $f$ is a separable polynomial. Take $\delta := \v{\disc(f)}$. The total cost of the computation of a $v$-integral basis of $L$ by the application of the Montes and the MaxMin algorithms is
	\begin{align*}
		\bigo{n^{2+\epsilon} \delta^{1+\epsilon} + n^{1+\epsilon} \delta \log(q) + n^{1+\epsilon} \delta^{2+\epsilon}},
	\end{align*}
	operations in $\F$. If we assume $q$ small, this will give us a refined estimation of $\bigo{n^{2+\epsilon} \delta^{1+\epsilon} + n^{1+\epsilon} \delta^{2+\epsilon}}$ bit operations.
\end{theorem}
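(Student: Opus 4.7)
The plan is to decompose the total runtime along the five-step recipe listed immediately above Theorem \ref{thm:total_complexity}, bound each step in operations in $\F$, and add the contributions.

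For Steps (1) and (3) I would invoke existing complexity analyses. The Montes algorithm is analysed in \cite{Guardia:2011:algorithm,Bauch:2014:thesis} and contributes $\bigo{n^{1+\epsilon}\delta^{1+\epsilon}+n^{1+\epsilon}\delta\log q}$ operations in $\F$, accounting for the middle term of the statement as well as part of the first. The Single Factor Lifting algorithm of \cite{Guardia:2011:sfl} refines each Okutsu approximation $\phi_\p$ up to the precision demanded by Theorem \ref{thm:maxmin_s_basis}; since this precision is controlled linearly in $\delta$, SFL contributes the $\bigo{n^{1+\epsilon}\delta^{2+\epsilon}}$ term.

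For Step (2), the central observation is that $\mm$ never manipulates polynomials, only rational numbers $w_{\p_k}(g_{j,\p_i})$, which can be read off the OM data using Proposition \ref{prop:explicit_valuations} without additional arithmetic in $\F$. Each of the $n_S\le n$ iterations of the main loop performs an update of $s\le n$ running valuations followed by one $\Min$ computation, so the whole of Step (2) is absorbed by the Montes cost.

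Step (4) is the step I expect to dominate the estimate. For each $k$, the numerator $g_{\ii_k}=\prod_{\p\in S}g_{i_{\p},\p}\in\oo[x]$ has degree $k$ and is obtained, through the product description of Okutsu numerators, as a product of elements of the Okutsu frames (together with powers of $x$). Using a balanced binary tree of fast polynomial multiplications, $g_{\ii_k}$ can be assembled with $\bigo{k^{1+\epsilon}}$ polynomial operations; tracking coefficient precisions at the level $\bigo{\delta}$ gives $\bigo{k^{1+\epsilon}\delta^{1+\epsilon}}$ operations in $\F$ for each $k$, and summing over $0\le k<n$ yields the leading term $\bigo{n^{2+\epsilon}\delta^{1+\epsilon}}$. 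Step (5), merely a division by a power of $\pi$, contributes at most $\bigo{n\delta}$ and is absorbed.

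Adding the four contributions gives the $\F$-operation bound of the statement. For the bit-complexity refinement, each $\F$-operation is charged $\bigo{\log^{1+\epsilon}q}$ bit operations; under the hypothesis that $q$ is small, the middle term $n^{1+\epsilon}\delta\log q$ is dominated by the other two and drops out, yielding the stated $\bigo{n^{2+\epsilon}\delta^{1+\epsilon}+n^{1+\epsilon}\delta^{2+\epsilon}}$ bit bound. The main technical obstacle is the precision bookkeeping in Step (4): to justify a uniform cost of $\delta^{1+\epsilon}$ per polynomial multiplication I would need uniform control, in terms of $\delta$, on the $v$-adic sizes of the intermediate coefficients of the partial products, which rests in turn on the Okutsu tower structure and on the control of the slopes $\lambda_{i,\p}$ given by Proposition \ref{prop:explicit_valuations}.
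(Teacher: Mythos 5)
The theorem is only cited from the author's thesis --- the paper itself contains no proof of it --- so I cannot check your outline line by line against the paper; I can only assess the plausibility of the route. Your five-step decomposition is the expected one and the grand total comes out right, but the way you partition the cost contains a concrete misattribution. The standard analysis of the Montes algorithm referred to here (Bauch--Nart--Stainsby, \emph{Complexity of OM factorizations of polynomials over local fields}) puts its cost at $\bigo{n^{2+\epsilon}\delta + n^{1+\epsilon}\delta^{2+\epsilon}}$ operations in $\F$ plus $\bigo{n^{1+\epsilon}\delta\log q}$ for factoring residual polynomials over $\F$; that is, Step (1) already saturates all three terms of the stated bound, including the leading $n^{2+\epsilon}\delta^{1+\epsilon}$. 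Your claim that Montes contributes only $\bigo{n^{1+\epsilon}\delta^{1+\epsilon}+n^{1+\epsilon}\delta\log q}$ drops a factor of $n$ in the first term and omits the $n^{1+\epsilon}\delta^{2+\epsilon}$ part entirely. Your arithmetic closes only because your estimate for Step (4) happens to coincide with the term you dropped; if the numerators were assembled incrementally with less work (which is how one would actually implement it), your account of Step (1) alone would not reach the stated exponent. The correct argument runs the other way: Montes dominates, and one verifies that Steps (2), (4), (5) stay within that budget.

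On the point you flag as the ``main technical obstacle'': the precision control is in fact routine here. Every numerator $g_{\ii_k}$ has $w(g_{\ii_k}(\theta))$ bounded by the valuation of the different of $L/K$, hence by $\delta$, so all intermediate coefficients of the partial products need at most $\bigo{\delta}$ digits in $\F$ --- this is precisely the precision to which the SFL step has already lifted the $\phi_\p$. So the bound for Step (4) holds, it just is not the source of the leading term.
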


\subsection{MaxMin example}
\label{sec:maxmin_example}

We will now present a small example for $I = \ooL$ and $S = \pp = \set{ \p, \q, \l }$, where $\tree$ is connected. The tree is shown in Figure \ref{fig:maxmin_example}, where we indicate only the data $(\phi, \lambda)$ for each edge.

\begin{figure}[htb]
	\centering
	\begin{tikzpicture}[scale=0.9]

		\draw[anchor=south east] (-0.15, -0.2) node {$\psi_{0}$};
		\filldraw (0, 0) circle (2.5pt);
		
		\filldraw (2, 1.5) circle (2.5pt);
		\filldraw (4, 1.5) circle (2.5pt);
		\filldraw (2, -0.75) circle (2.5pt);
		\filldraw (4, 0) circle (2.5pt);
		\filldraw (6, 0) circle (2.5pt);
		\filldraw (4, -1.5) circle (2.5pt);
		
		\draw (0, 0) -- (2, 1.5);
		\draw[dotted] (2, 1.5) -- (4, 1.5);
		\draw (0, 0) -- (2, -0.75);
		\draw (2, -0.75) -- (4, 0);
		\draw[dotted] (4, 0) -- (6, 0);
		\draw[dotted] (2, -0.75) -- (4, -1.5);
		
		\draw[anchor=south east] (1.1, 0.6) node {$(\phi, 6)$};
		\draw[anchor=north east] (1.1, -0.3) node {$(\phi, 4)$};
		\draw[anchor=south east] (3.3, -0.4) node {$(\phi', 6)$};

		
		\draw[anchor=west] (4.2, 1.475) node {$\p$};
		\draw[anchor=west] (6.2, -0.025) node {$\q$};
		\draw[anchor=west] (4.2, -1.525) node {$\l$};
	\end{tikzpicture}
	\caption{Example connected tree $\tree$ of types.}
	\label{fig:maxmin_example}
\end{figure}

Since all slopes have integer values, all denominators $e_{i}$ are equal to one. We assume that $f_{0} = m_{1} = \deg \psi_{0} = 1$ and:
\begin{align*}
	\begin{aligned}
	\p :\quad   &	e_{1} = 1, f_{1} = 4, h_{1} = 6;  \\
	\q :\quad   &	e_{1} = 1, f_{1} = 3, h_{1} = 4;  &&e_{2} = 1, f_{2} = 2, h_{2} = 6; \\
	\l :\quad   &	e_{1} = 1, f_{1} = 3, h_{1} = 4.
	\end{aligned}
\end{align*}

Note that $n_{\p} = 4$, $n_{\q} = 6$, and $n_{\l} = 3$, so that $n = 13$.

The data corresponding to the edges leading to a leaf are not specified as we do not need them to run $\mm$.

Suppose moreover that%
\begin{align*}
	\phi(\p, \q) &= \phi(\p, \l) = \phi_{1,\p} = \phi_{1,\q} = \phi_{1,\l} = \phi, \qquad
	\phi(\q, \l) = \phi_{2,\q} = \phi',
\end{align*}
and the hidden slopes are:
\begin{align*}
	\clam{\p}{\q} &= \clam{\p}{\l} = 6, \qquad
	\clam{\q}{\p} = \clam{\l}{\p} = 4, \qquad
	\clam{\q}{\l} = 6, \qquad
	\clam{\l}{\q} = 5.
%
\end{align*}

The numerators of the extended Okutsu bases of each prime ideal will be,
\begin{align*}
	\Num_{\p} &:\  1,\  \phi_{1,\p},\  \phi_{1,\p}^{2},\  \phi_{1,\p}^{3},\  \phi_{\p}; \\
	\Num_{\q} &:\  1,\  \phi_{1,\q},\  \phi_{1,\q}^{2},\  \phi_{2,\q},\  \phi_{2,\q} \phi_{1,\q},\  \phi_{2,\q} \phi_{1,\q}^{2},\  \phi_{\q}; \\
	\Num_{\l} &:\  1,\  \phi_{1,\l},\  \phi_{1,\l}^{2},\  \phi_{\l}.
\end{align*}

Using the explicit formulas of Proposition \ref{prop:explicit_valuations}, we may compute the valuations of each of the $\phi$-polynomials. We write them as a tuple $\vec{w} = (w_{\p}, w_{\q}, w_{\l})$.
\begin{align*}
	\begin{aligned}
	&\wv{\phi_{1,\p}} = (6, 4, 4), \quad && \wv{\phi_{\p}} = (\infty, 16, 16), \\
	&\wv{\phi_{1,\q}} = (6, 4, 4), \quad && \wv{\phi_{2,\q}} = (12, 18, 17) \quad && \wv{\phi_{\q}} = (24, \infty, 34), \\
	&\wv{\phi_{1,\l}} = (6, 4, 4), \quad && \wv{\phi_{\l}} = (12, 17, \infty).
	\end{aligned}
\end{align*}

We can now step through the results of running $\mm$. The ``minimal'' valuation is underlined at each step. This indicates the index which will be incremented in the following step.
\begin{center}
	\begin{tabular}{c | c | c | c}
		$i$ & $g_{i}$ & $\wv{g_{i}}$ & $w(g_{i})$ \\
		\hline
		$0$  & $1 \cdot 1 \cdot 1$                              					& $(\underline{0}, 0, 0)$ &  0 \\
		$1$  & $\phi_{1,\p} \cdot 1 \cdot 1$ \quad			    			& $(6, \underline{4}, 4)$ & 4 \\
		$2$  & $\phi_{1,\p} \cdot \phi_{1,\q} \cdot  1$          				& $(12, \underline{8}, 8)$ & 8 \\
		$3$  & $\phi_{1,\p} \cdot \phi_{1,\q}^{2} \cdot  1$      				& $(18, \underline{12}, 12)$ & 12 \\
		$4$  & $\phi_{1,\p} \cdot \phi_{2,\q} \cdot  1$          				& $(\underline{18}, 22, 21)$ & 18 \\
		$5$  & $\phi_{1,\p}^{2} \cdot \phi_{2,\q} \cdot  1$      				& $(\underline{24}, 26, 25)$ & 24 \\
		$6$  & $\phi_{1,\p}^{3} \cdot \phi_{2,\q} \cdot  1$      				& $(30, 30, \underline{29})$ & 29 \\
	\end{tabular}
	
	\begin{tabular}{c | c | c | c}
		$7$  & $\phi_{1,\p}^{3} \cdot \phi_{2,\q} \cdot  \phi_{1,\l}$       		& $(36, 34, \underline{33})$ & 33 \\
		$8$  & $\phi_{1,\p}^{3} \cdot \phi_{2,\q} \cdot  \phi_{1,\l}^{2}$   		& $(42, 38, \underline{37})$ & 37 \\
		$9$  & $\phi_{1,\p}^{3} \cdot \phi_{2,\q} \cdot  \phi_{\l}$         		& $(\underline{42}, 47, \infty)$ & 42 \\
		$10$ & $\phi_{\p} \cdot \phi_{2,\q} \cdot  \phi_{\l}$              			& $(\infty, \underline{51}, \infty)$ & 51 \\
		$11$ & $\phi_{\p} \cdot \phi_{2,\q} \phi_{1,\q} \cdot  \phi_{\l}$   		& $(\infty, \underline{55}, \infty)$ & 55 \\
		$12$ & $\phi_{\p} \cdot \phi_{2,\q} \phi_{1,\q}^{2} \cdot  \phi_{\l}$	& $(\infty, \underline{59}, \infty)$ & 59 \\
		$13$ & $\phi_{\p} \cdot \phi_{\q} \cdot  \phi_{\l}$                     		& $(\infty, \infty, \infty)$ & $\infty$
	\end{tabular}
\end{center}

The final element $g_{13}$  is not included in the $v$-integral basis.

%

\section{Computational examples}
\label{sec:computational_examples}

In this section, we will present a number of example computations using an implementation of the MaxMin algorithm for the computer algebra system Magma \cite{Bosma:1997:magma}. We compare MaxMin's execution time for computing $v$-integral bases with that of the method of the quotients, another OM-based algorithm, as well as the internal routines found in Magma.

All executions were performed on GNU/Linux running on 8-core 3.0GHz nodes with 32GB main memory. Each execution ran in a single core, using Magma 2.18-5.

Examples will be given for number fields $L = \Q[x]/(f)$ for polynomials $f \in \Z[x]$ and function fields $L = \F_{q}(t)[x]/(f)$ over a finite field $\F_{q}$ for polynomials $f \in \F_{q}[t][x]$. The example defining polynomials are taken from \cite{Guardia:2011:sfl}.

The first example is comprised of the $B$-class of polynomials,
\begin{align*}
	B_{p,k}(x) = (x^2 - 2x + 4)^{3} + p^{k},
\end{align*}
of degree 6. We take $f(x) = B_{13,k}(x) \in \Z[x]$ in the number field case and $f(x) = B_{t^{3}+2,k}(x) \in \F_{7}[t,x]$ in the function field case, with $k \le 5000$. The execution times for computing a Hermitian $v$-integral basis of $L$ are shown in Figure \ref{fig:B_maxorder}.

\begin{figure}[htb]
	\centering
	\begin{subfigure}[b]{0.47\textwidth}
		\includegraphics[width=\textwidth]{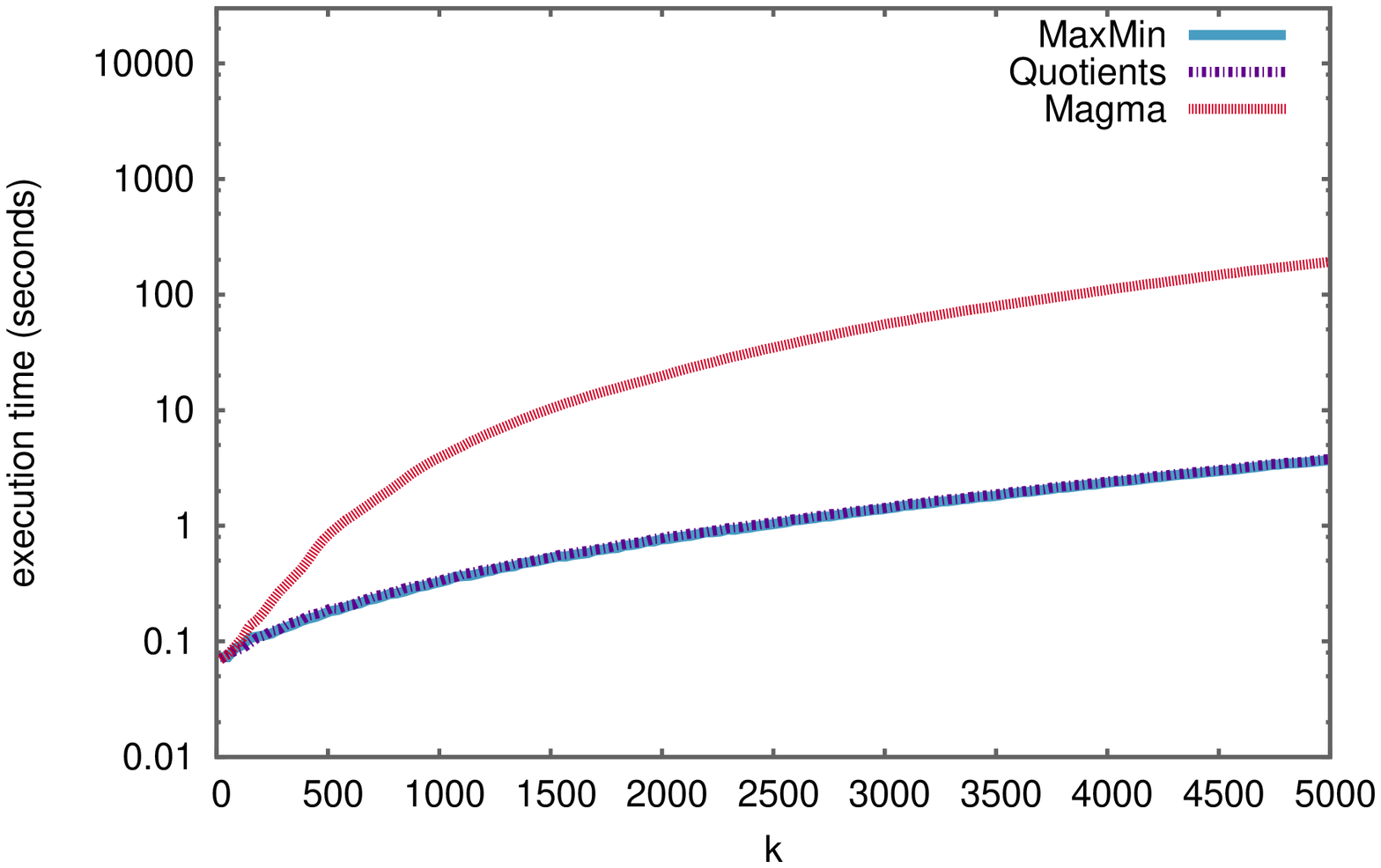}
		\caption{$B_{13,k}(x) \in \Z[x]$.}
		\label{fig:nf_B_maxorder}
	\end{subfigure}
	\hfill
	\begin{subfigure}[b]{0.47\textwidth}
		\includegraphics[width=\textwidth]{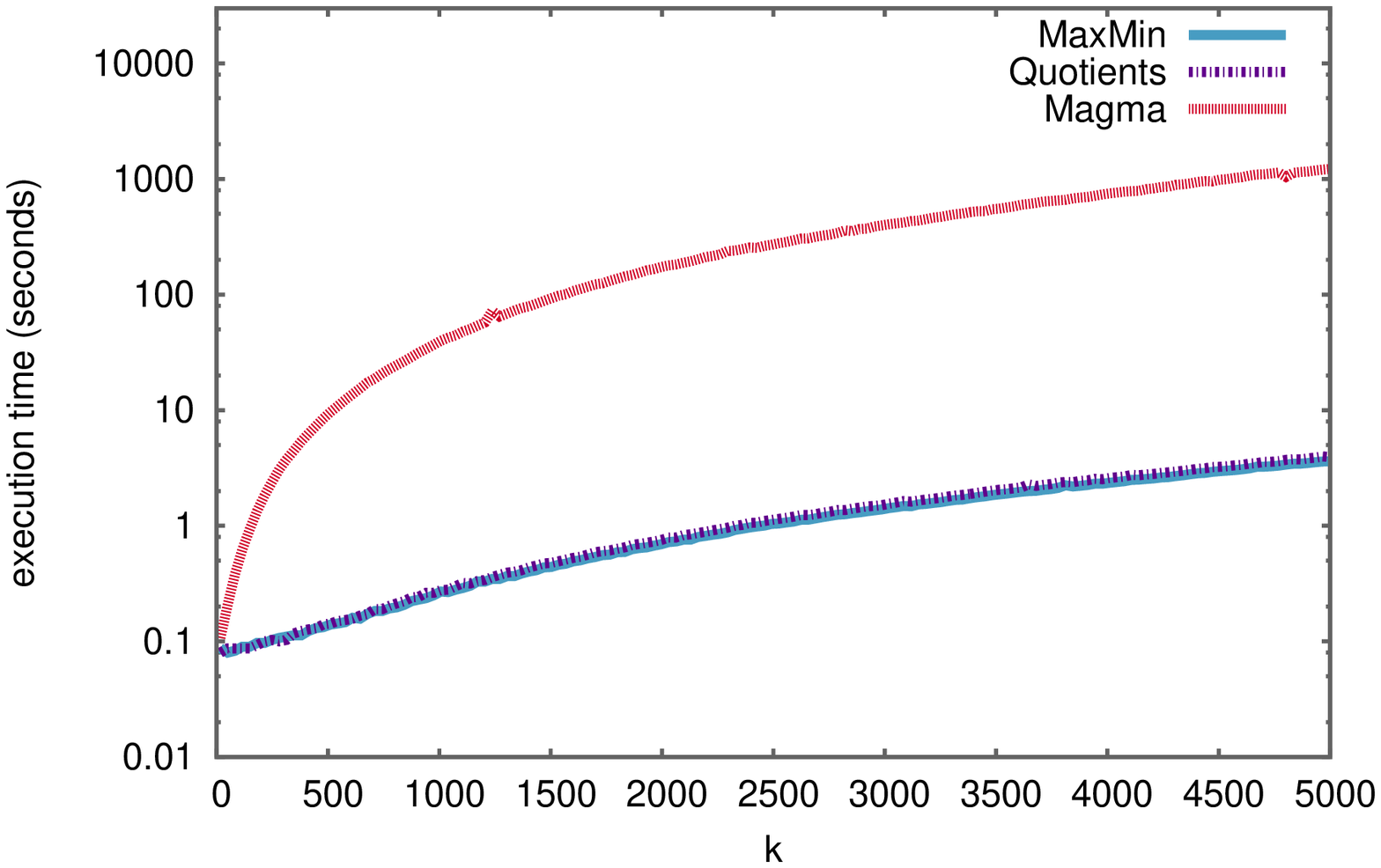}
		\caption{$B_{t^{3}+2,k}(x) \in \F_{7}[t,x]$.}
		\label{fig:ff_B_maxorder}
	\end{subfigure}
	\caption{Running time for maximal order Hermitian $p$-basis computation defined by polynomials $B_{p,k}(x)$ with $k \le 5000$.}
	\label{fig:B_maxorder}
\end{figure}

Due to the low degree of the field extensions, computing the HNF of a basis is negligible compared to computing the basis itself. It can be seen that both OM-based routines have a similar performance and are roughly 100 times faster than the internal Magma routines in the number field case for $k = 5000$ and 1000 times faster in the function field case.

In order to demonstrate the performance of MaxMin on larger polynomials, we will consider the $A$-class of polynomials
\begin{align*}
	A_{p,n,k}^{m}(x) = (x^{n} + 2 p^{k})( (x + 2)^{n} + 2 p^{k}) \cdots ( (x + 2m - 2)^{n} + 2p^{k} ) + 2p^{nmk}.
\end{align*}
These polynomials have degree $nm$. In the number field case, we take $f(x) = A_{101,n,29}^{4}(x) \in \Z[x]$ with $n \le 100$. Figure \ref{fig:nf_Am_maxorder} shows the times for the OM-based methods and the total times when we include the time to compute the Hermitian basis.

\begin{figure}[htb]
	\centering
	\begin{subfigure}[b]{0.47\textwidth}
		\includegraphics[width=\textwidth]{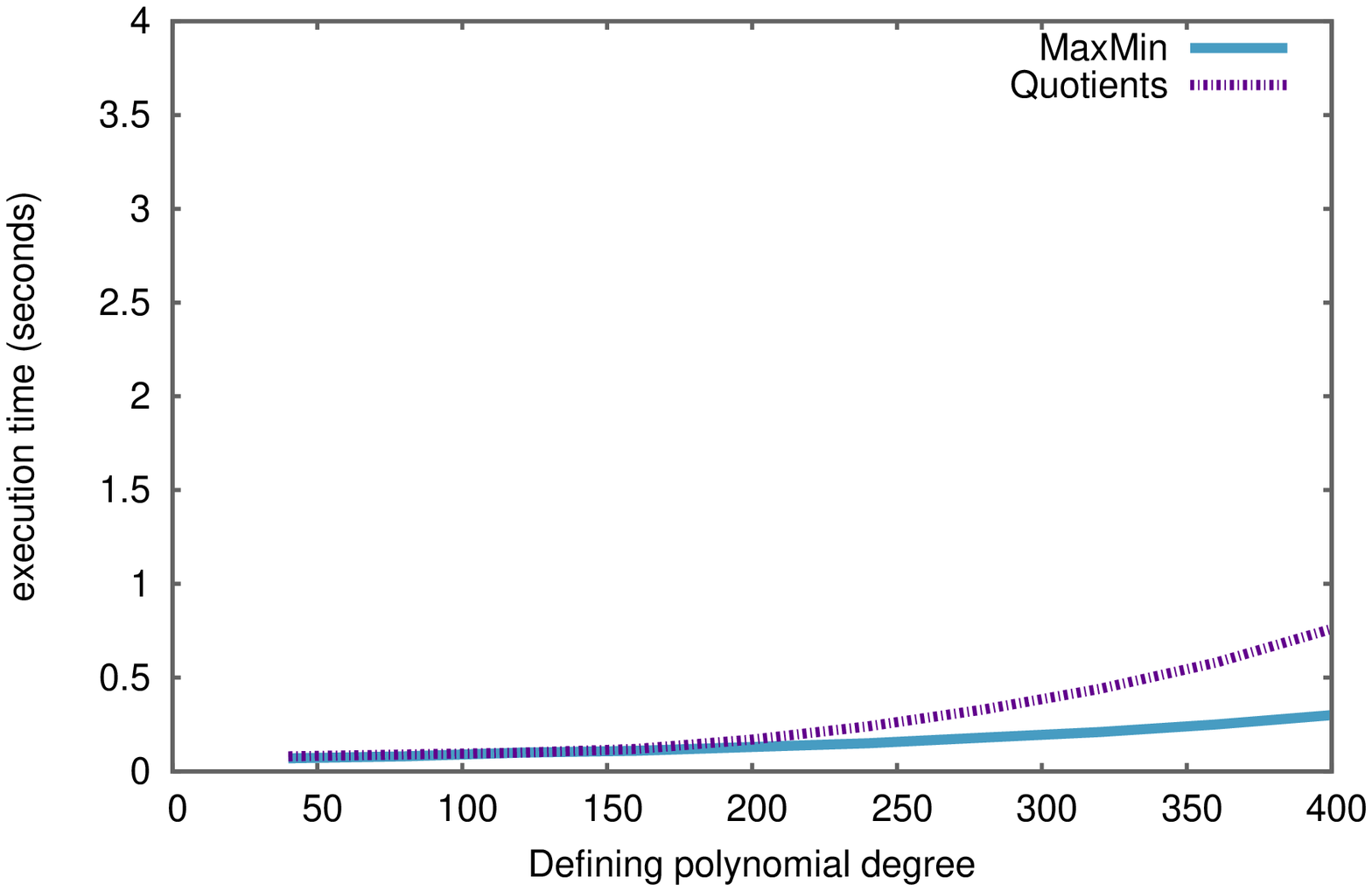}
		\caption{Without HNF.}
		\label{fig:nf_Am_maxorder_nohnf}
	\end{subfigure}
	\hfill
	\begin{subfigure}[b]{0.47\textwidth}
		\includegraphics[width=\textwidth]{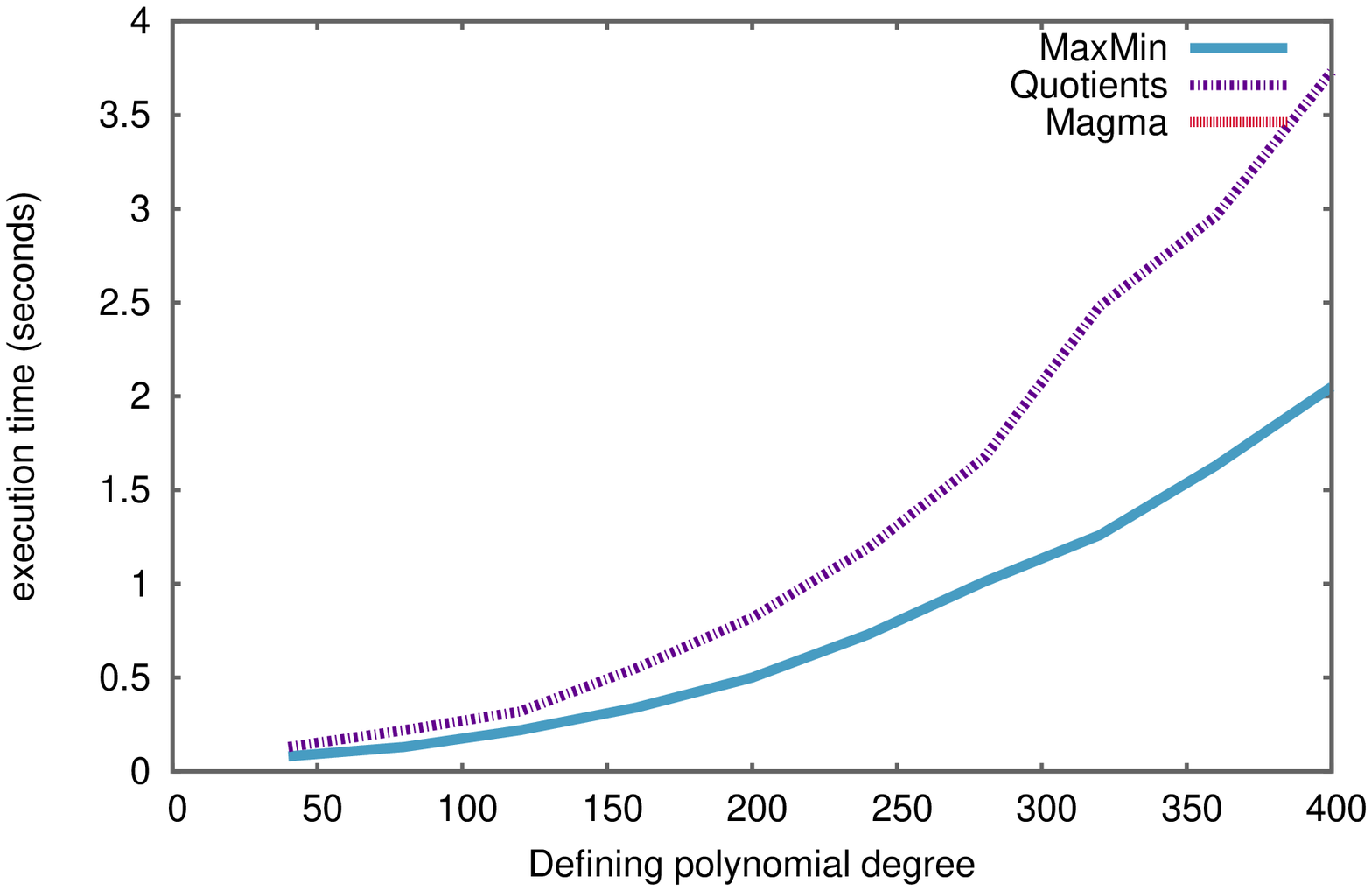}
		\caption{With HNF.}
		\label{fig:nf_Am_maxorder_hnf}
	\end{subfigure}
	\caption{Running time for maximal order $p$-basis computation defined by polynomials $A_{101,n,29}^{4}(x)$ with $\deg(A_{101,n,29}^{4}) \le 400$.}
	\label{fig:nf_Am_maxorder}
\end{figure}

From this example, we can see that MaxMin is somewhat faster than the Method of the Quotients, however when the time to compute the HNF of the resulting basis is included, we see the advantage of the triangular basis computed by MaxMin. In this example, Magma took 257 seconds to compute the basis for $\deg f = 40$ (and does not appear in the figure), and was unable to complete the computation for $\deg f = 80$ due to main memory limitations.

We consider a slightly smaller example in the function field case, with $f(x) = A_{t^2+2,n,6}^{3} \in \F_{7}[t,x]$, once again for $n \le 100$. The resulting execution times are presented in Figure \ref{fig:ff_Am_maxorder}.

\begin{figure}[htb]
	\centering
	\begin{subfigure}[b]{0.47\textwidth}
		\includegraphics[width=\textwidth]{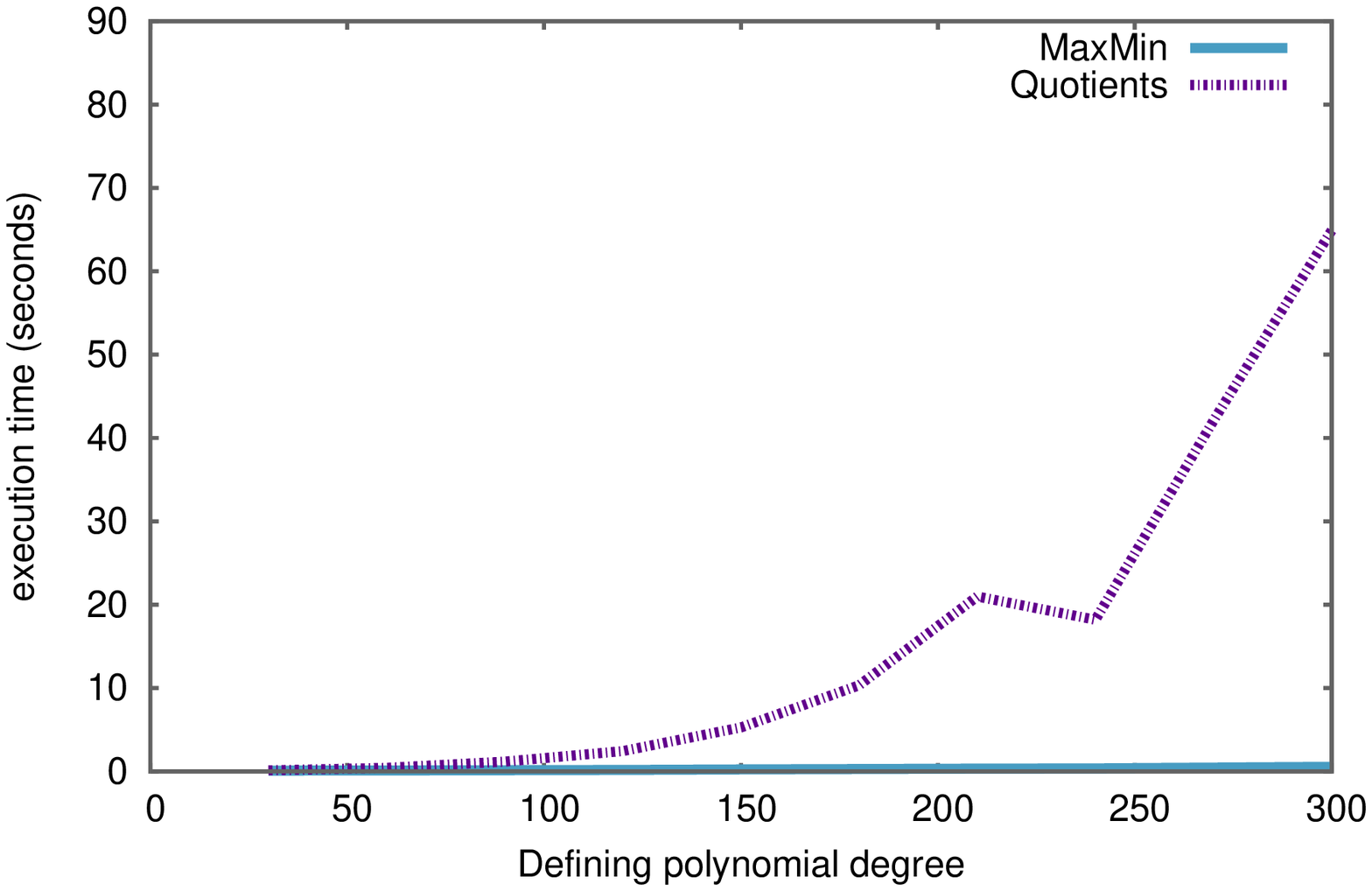}
		\caption{Without HNF.}
		\label{fig:ff_Am_maxorder_nohnf}
	\end{subfigure}
	\hfill
	\begin{subfigure}[b]{0.47\textwidth}
		\includegraphics[width=\textwidth]{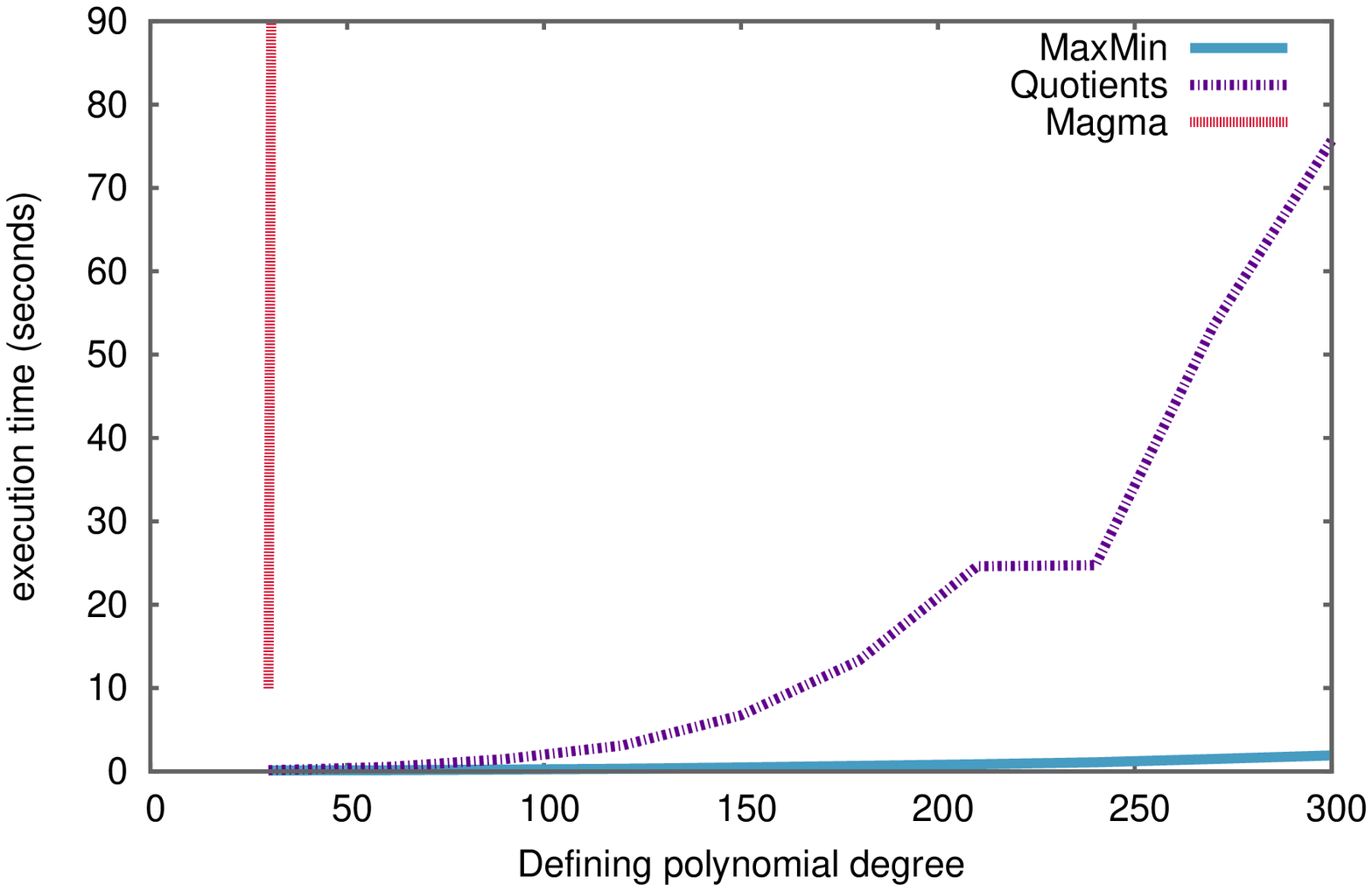}
		\caption{With HNF.}
		\label{fig:ff_Am_maxorder_hnf}
	\end{subfigure}
	\caption{Running time for maximal order $p$-basis computation defined by polynomials $A_{101,n,29}^{3}(x)$ with $\deg(A_{101,n,29}^{4}) \le 400$.}
	\label{fig:ff_Am_maxorder}
\end{figure}

Here, MaxMin computes a basis considerably faster than the Method of the Quotients, however the improvement in computing the HNF from the triangular basis is not as pronounced as in the number field case. The second data point for the Magma routine is at $\deg f = 60$, which took 3304 seconds to compute a Hermitian basis.

Finally, we will consider some very large examples. We take the recursively defined $EC$-class polynomials defined in this case as $EC_{p,j}(x) = E_{p,j}(x) \cdot C_{p,28} + p^{900}$, where the $E$-class and $C$-class polynomials are given in \cite{Guardia:2011:sfl}. Firstly, we consider $f(x) = EC_{101,8}(x) \in \Z[x]$, a degree $900$ polynomial which splits into six degree 6 factors and one degree 864 factor over the $p$-adics. Secondly, we consider $f(x) = EC_{t^{2}+4,4}(x) \in \F_{7}[t,x]$, a degree 72 polynomial that splits into four degree 9 factors and one degree 36 factor over the $p(t)$-adics.


\begin{table}[htb]
	\centering
	\caption{Running time (in seconds) for maximal order $p$-basis computation defined by polynomials $EC_{p,j}$.}
	\label{tab:EC_maxorder}
	\begin{subtable}[b]{0.49\textwidth}
		\centering
		\caption{$EC_{101,8}(x) \in \Z[x]$.}
		\label{fig:nf_EC_maxorder}
		\begin{tabular}{l | c | c}
			\textit{Algorithm}& \textit{Basis}	& \textit{HNF basis} \\
			\hline
			\hline
			MaxMin		& 9.9		& 112.6 \\
			\hline
			Quotients		& 21.1	& 429.3 \\
		\end{tabular}
	\end{subtable}
	\begin{subtable}[b]{0.49\textwidth}
		\centering
		\caption{$EC_{t^{2}+4,4}(x) \in \F_{7}[t,x]$.}
		\label{fig:ff_EC_maxorder}
		\begin{tabular}{l | c | c}
			\textit{Algorithm}& \textit{Basis}	& \textit{HNF basis} \\
			\hline
			\hline
			MaxMin		& 13.3	& 21.5 \\
			\hline
			Quotients		& 89.5	& 8353.8 \\
		\end{tabular}
	\end{subtable}
\end{table}

These final examples show advantages of using MaxMin to compute a local basis, whether or not a subsequent step to pass the basis to HNF is required. However, they also illustrate the advantages of using a triangular basis instead of a basis in HNF where it is possible.

\section{Optimal polynomials in the Okutsu set}
\label{sec:proof_opt_polys}

In this section, we will present the additional details necessary to prove Theorem \ref{thm:okutsu_basis_comb_optimal}. This will be broken into two parts, Proposition \ref{thm:optimal_in_phi} and Proposition \ref{prop:okutsu_basis_comb_optimal}, which together prove the theorem.

\subsection{Operators associated with a type}
\label{sec:type_operators}

Consider a type $\type$ of order $r$ over $(K, v)$:
\begin{align*}
	\type = \left( \psi_{0}; (\phi_{1}, \lambda_{1}, \psi_{1}); \dots; (\phi_{r}, \lambda_{r}, \psi_{r}) \right).
\end{align*}

The type $\type$ carries three kinds of operators. There are normalised valuations:
\begin{align*}
	v_{i} &: \Kvx \longrightarrow \Z \cup \set{\infty}, \qquad 0 \le i \le r.
\end{align*}
The last valuation $v_{r}$ is also denoted $v_{\type}$. Also, we have Newton polygon operators:
\begin{align*}
	\Newton_{i} = \Newton_{v_{i-1},\phi_{i}} &: \Kvx \longrightarrow 2^{\R^{2}}, \qquad 1 \le i \le r,
\end{align*}
where $2^{\R^{2}}$ is the set of subsets of $\R^{2}$. The image of $0$ is the empty set. For every non-zero $g \in \Kx$ we consider the canonical $\phi_{i}$-expansion $g = \sum_{0 \le s} a_{s} \phi_{i}^{s}$, where $a_{s} \in \Kx$ have degree less than $\deg \phi_{i}$. Then $\Newton_{i}(g)$ is the lower convex hull of the set of points $(s, \v[i-1]{a_{s}\phi_{i}^{s}})$.

Finally, we have residual polynomial operators:
\begin{align*}
	\Respol_{i} = \Respol_{v_{i-1},\phi_{i},\lambda_{i}} &: \Kvx \longrightarrow \F_{i}[y], \qquad 0 \le i \le r,
\end{align*}
which are multiplicative: $\Respol_{i}(gh) = \Respol_{i}(g) \Respol_{i}(h)$ for all $g, h \in \Kx$.

The valuation $v_{0}$ is defined as
\begin{align*}
	\v[0]{\sum_{0 \le s} a_{s} x^{s}} = \Min\set{\v{a_{s}} : 0 \le s}.
\end{align*}
The residual polynomial operator $\Respol_{0}$ is defined as:
\begin{align*}
	\Respol_{0}(g) = \pi^{-\v[0]{g}} g(y) \pmod{\m[y]}.
\end{align*}

For $1 \le i \le r$ the operators $v_{i}, \Respol_{i}$ are defined in a certain recurrent way \cite[Sec. 3.1]{Guardia:2013:genetics}.

\begin{definition}
	We say that $\type$ is \emph{optimal} if $m_{1} < m_{2} < \cdots < m_{r}$.
\end{definition}

\begin{definition}
	Let $\type$ be a type of order $r \ge 0$ and $g \in \Kvx$. We define $\ord_{\type}(g)$ as the non-negative integer $\ord_{\psi_{r}}(\Respol_{r}(g))$ where $\psi_{r}$ and $\Respol_{r}(g)$ are considered as polynomials in $\F_{r}[y]$. If $\ord_{\type}(g) > 0$ we say that $\type$ \emph{divides} $g$ and we write $\type \divides g$.
\end{definition}

\begin{definition}
	Let $\type$ be a type of order $r$ and $g \in \oox$. We say that $g$ is a \emph{representative} of $\type$ if $g$ is a monic polynomial of degree $m_{r+1} := e_{r} f_{r} m_{r}$ such that $\Respol_{r}(g) = \psi_{r}$. We denote by $\Rep(\type)$ the set of representatives of $\type$.
\end{definition}

If $\phi \in \oox$ is a representative of $\type$, then by definition $\tilde{\type} = (\type; (\phi, \lambda, \psi))$ is a type of order $r + 1$ for any choice of a positive rational number $\lambda$ and a monic irreducible polynomial $\psi \in \F_{r+1}[y]$, $\psi \ne y$.

\begin{definition}
	A \emph{prime polynomial} is a monic irreducible polynomial in $\oovx$.
\end{definition}

Since $\Respol_{r}$ is multiplicative and $\psi_{r}$ is irreducible, the representatives of $\type$ are prime polynomials.

The next result collects the essential properties of the polynomials which are divisible by a type. It is taken from \cite[Lem. 1.4, Thms. 3.1, 3.7]{Guardia:2012:hn}.

\begin{theorem}
	\label{thm:type_F_props}
	Let $\type$ be a type of order $r \ge 0$ and let $\phi \in \oox$ be a representative of $\type$. Let $F \in \oovx$ be a prime polynomial and choose $\theta \in \Kvbar$ a root of $F$.
	
	If $\phi \ne F$ and $\type \divides F$, then
	\begin{enumerate}
		\item $\Newton_{v_{r},\phi}(F)$ is one-sided of slope $-\lambda$, for a certain positive rational number $\lambda$ such that $\vt{\phi} = (V_{r+1} + \lambda) / (\eto{r})$.
		\item $\deg(F) = \deg(\phi^{\ell})$, where $\ell = \ell(\Newton_{v_{r},\phi}(F))$ is the \emph{length} of the Newton polygon; that is the abscissa of the right end point.
		\item $\deg(\Respol_{v_{r},\phi,\lambda}(F)) = e_{\lambda}^{-1} \ell$ and $\Respol_{v_{r},\phi,\lambda}(F) = \psi^{a}$, where $e_{\lambda}$ is the least positive denominator of $\lambda$ and $\psi \in \F_{r+1}[y]$ is a monic irreducible polynomial $\psi \ne y$.
		\item The type $\tilde{\type} = (\type; (\phi, \lambda, \psi))$ divides $F$.
	\end{enumerate}
\end{theorem}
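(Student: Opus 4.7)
The approach is a higher-order Newton polygon analysis: we read off the desired data about $F$ directly from $\Newton_{v_r,\phi}(F)$ and $\Respol_{v_r,\phi,\lambda}(F)$, using the primality of $F$ at each step. The key tool is the higher-order analog of Hensel's lemma in the MacLane--Montes framework: a coprime factorization of a residual polynomial, or a Newton polygon decomposing into sides of distinct slopes, lifts to a genuine factorization in $\oovx$, which would contradict the primality of $F$.

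For part (1), since $F$ and $\phi$ are both prime in $\oovx$ and $F \ne \phi$, we have $\phi \nmid F$, so the $\phi$-expansion of $F$ has nonzero constant term and $\Newton_{v_r,\phi}(F)$ is a genuine polygon whose leftmost vertex lies at abscissa $0$. If this polygon had two distinct slopes, the higher-order Hensel lemma would produce a nontrivial factorization of $F$, a contradiction; hence it is one-sided of some slope $-\lambda$ with $\lambda > 0$. The identity $\vt{\phi} = (V_{r+1}+\lambda)/\eto{r}$ then comes from evaluating the $\phi$-expansion at $\theta$: the constraint $F(\theta)=0$ forces the minimum of $\vt{a_s \phi^s}$ to be attained at least twice on the polygon, and translating from $\vt{\cdot}$ to $v_r(\cdot)$ via the canonical relation for $\phi$ yields the formula. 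Part (2) follows by reading $\ell$ off the polygon and comparing $\phi$-adic degrees in $F = \sum_s a_s \phi^s$.

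For part (3), multiplicativity of $\Respol_{v_r,\phi,\lambda}$ forces its image on the prime polynomial $F$ to be a power of a single monic irreducible $\psi \in \F_{r+1}[y]$; otherwise a coprime factorization would Hensel-lift to a nontrivial factorization of $F$. The condition $\psi \ne y$ is equivalent to nonvanishing of the constant term of the $\phi$-expansion, already established in part (1). The degree formula $\deg \Respol_{v_r,\phi,\lambda}(F) = e_\lambda^{-1}\ell$ follows from counting lattice points on a polygon side of slope $-\lambda$ and length $\ell$. Part (4) is then immediate: the tuple $\tilde\type = (\type;(\phi,\lambda,\psi))$ satisfies the axioms of a type by (1)--(3), and the divisibility $\tilde\type \divides F$ is by definition $\psi \divides \Respol_{v_r,\phi,\lambda}(F)$, which holds by (3).

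The hardest part is the higher-order Hensel lemma that underpins the factorization arguments invoked in (1) and (3). In the first-order setting ($r = 0$) this is standard Newton polygon material, but in the MacLane--Montes framework the recursive definitions of $v_r$ and $\Respol_{v_r,\phi,\lambda}$ make the lifting of factorizations from $\F_{r+1}[y]$ back up to $\oovx$ quite delicate; this is precisely the content of the cited Lemma 1.4 and Theorems 3.1, 3.7 of \cite{Guardia:2012:hn}, and reproducing it from scratch here would essentially duplicate their development.
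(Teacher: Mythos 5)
The paper does not supply its own proof of this theorem: it is stated as a direct compilation of \cite[Lem.\ 1.4, Thms.\ 3.1, 3.7]{Guardia:2012:hn} with no argument given. Your sketch accurately identifies the mechanism in that reference---a higher-order Newton polygon and Hensel-lifting analysis in which the primality of $F$ forces a one-sided polygon and a pure-power residual polynomial---and, like the paper, you ultimately defer the substantive technical content (the higher-order lifting lemma and the theorem of the polygon) to the same source, so your treatment is consistent with the paper's.
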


For $0 \le i \le r$, let $\type_{i} = \Trunc_{i}(\type)$ be the type of order $i$ obtained by dropping all levels $(\phi_{j}, \lambda_{j}, \psi_{j})$ of order $j > i$ from $\type$.

By the definition of a type, each $\phi_{i}$ is a representative of $\type_{i-1}$, for $1 \le i \le r$.

The next result follows easily from Theorem \ref{thm:type_F_props} and \cite[Lem. 2.4]{Guardia:2012:hn}.

\begin{theorem}
	\label{thm:type_divides_F}
	With the above notation, $\type_{i} \divides F$ for all $0 \le i \le r$, $\Newton_{i}(F)$ is one sided of slope $-\lambda_{i}$, and $\Respol_{i}(F)$ is a power of $\psi_{i}$.
\end{theorem}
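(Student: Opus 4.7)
The plan is to combine \cite[Lem.~2.4]{Guardia:2012:hn}, which propagates divisibility by a type down to every truncation, with Theorem~\ref{thm:type_F_props} applied level by level. Since each $\phi_i$ is by definition a representative of $\type_{i-1}$, invoking Theorem~\ref{thm:type_F_props} at level $i$ with the representative $\phi_i$ is exactly what produces Newton-polygon and residual-polynomial information about $\Newton_i(F) = \Newton_{v_{i-1},\phi_i}(F)$ and $\Respol_i(F) = \Respol_{v_{i-1},\phi_i,\lambda_i}(F)$.

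First I would establish $\type_i \divides F$ for all $0 \le i \le r$. The case $i = r$ is the hypothesis, and for $i < r$ this is the cited Lemma~2.4, whose content is that divisibility by the full type automatically forces divisibility by each truncation. This is in fact built into the recursive definition of the operators $\Respol_j$: $\psi_r$ can only appear as a factor of $\Respol_r(F)$ once $F$ has respected the slope and residual data at every lower level. Next, for each $1 \le i \le r$, I would apply Theorem~\ref{thm:type_F_props} to the type $\type_{i-1}$, its representative $\phi_i$, and the prime polynomial $F$. The hypothesis $F \ne \phi_i$ holds by degree, since $\deg \phi_i = m_i < m_{r+1} \le \deg F$ in the Okutsu setting where the frame strictly increases in degree. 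The theorem then yields that $\Newton_i(F)$ is one-sided of some slope $-\lambda$ and that $\Respol_{v_{i-1},\phi_i,\lambda}(F) = \psi^a$ for some monic irreducible $\psi \in \F_i[y]$.

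It remains to pin $\lambda = \lambda_i$ and $\psi = \psi_i$. From $\type_i \divides F$ we have $\psi_i \mid \Respol_i(F) = \Respol_{v_{i-1},\phi_i,\lambda_i}(F)$, so this residual polynomial is non-constant; by the definition of $\Respol_i$ this forces $-\lambda_i$ to actually appear as a slope of $\Newton_i(F)$, and since that polygon is one-sided this identifies $\lambda = \lambda_i$. Then $\psi^a$ being divisible by the prime $\psi_i$ gives $\psi = \psi_i$, so $\Respol_i(F)$ is indeed a power of $\psi_i$. The case $i = 0$, where no Newton polygon is defined, follows directly from $\type_0 \divides F$ together with the fact that the reduction of the prime polynomial $F \in \oovx$ is a power of a single irreducible in $\F[y]$. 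The main obstacle will be invoking \cite[Lem.~2.4]{Guardia:2012:hn} in exactly the right form to carry divisibility downwards through every truncation; once that is in place, the rest is a routine unpacking of Theorem~\ref{thm:type_F_props} level by level.
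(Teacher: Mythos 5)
Your approach matches what the paper cites for this result: it uses Theorem~\ref{thm:type_F_props} applied level by level, together with \cite[Lem.~2.4]{Guardia:2012:hn} to carry divisibility of $F$ down to every truncation $\type_i$. The chain of deductions that pins $\lambda = \lambda_i$ (from $\psi_i \mid \Respol_i(F)$, which forces $\Respol_i(F)$ to be non-constant and hence $-\lambda_i$ to appear as a slope of the one-sided polygon) and then $\psi = \psi_i$ (from irreducibility of both) is exactly right, as is the separate remark for $i = 0$.

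One small repair is warranted. You rule out $F = \phi_i$ via the degree bound $\deg \phi_i = m_i < m_{r+1} \le \deg F$, but this strict inequality is only guaranteed when $\type$ is optimal, whereas the theorem (and Theorem~\ref{thm:type_F_props}) are stated for arbitrary types; indeed, the paper later applies Theorem~\ref{thm:type_divides_F} to nodes of the non-optimised tree $\treenop$, where consecutive degrees $m_i = m_{i+1}$ can occur. A fix that works in general: having already established $\type_i \divides F$, observe that $\type_i$ does \emph{not} divide $\phi_i$, since the $\phi_i$-expansion of $\phi_i$ is the single monomial $\phi_i$, so $\Respol_i(\phi_i)$ is a unit constant, while $\psi_i \ne y$ has positive degree. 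Hence $F \ne \phi_i$, and Theorem~\ref{thm:type_F_props} applies at level $i$ without any optimality assumption.
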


Finally, the following result follows from Theorem \ref{thm:type_F_props} and  \cite[Prop. 3.5]{Guardia:2012:hn}.

\begin{theorem}
	\label{thm:other_poly_value}
	With the above notation, let $g \in \oovx$ be another prime polynomial such that $\type \divides g$, $\tilde{\type} \ndivides g$. Then,
	\begin{align*}
		\vt{g} = \frac{\deg(g)}{\deg(\phi)} \frac{V_{r+1} + \Min\set{\lambda, \lambda'}}{\eto{r}},
	\end{align*}
	where $-\lambda'$ is the slope of $\Newton_{v_{r},\phi}(g)$.
\end{theorem}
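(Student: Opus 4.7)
The plan is to interpret $v(g(\theta))$ via the augmented type $\tilde{\type} = (\type; (\phi, \lambda, \psi))$ which, by Theorem \ref{thm:type_F_props}(4), divides $F$; its associated valuation will record $v(\,\cdot\,(\theta))$ faithfully on polynomials not divisible by $\tilde{\type}$, and the Newton polygon of $g$ itself will pin down exactly what value it takes.

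First, I would apply Theorem \ref{thm:type_F_props} to the prime polynomial $g$ itself. Since $\type \divides g$ and $g \ne \phi$, parts (1)--(2) yield that $\Newton_{v_r,\phi}(g)$ is one-sided of some slope $-\lambda' > 0$ and length $\ell' = \deg(g)/\deg(\phi)$. Writing the $\phi$-expansion $g = \sum_{s=0}^{\ell'} a_s \phi^s$ (with $a_{\ell'} = 1$ as $g$ is monic) and using that $v_r(\phi) = V_{r+1}$, the one-sidedness translates into $v_r(a_0) = \ell'(V_{r+1} + \lambda')$, with all intermediate points $(s, v_r(a_s \phi^s))$ lying on or above the connecting segment.

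Second, I would invoke \cite[Prop. 3.5]{Guardia:2012:hn}: for any $h \in \oovx$ not divisible by $\tilde{\type}$ one has $E \cdot v(h(\theta)) = v_{\tilde{\type}}(h) := \min_s [v_r(a_s \phi^s) + s\lambda]$, where $E = e_1 \cdots e_r$ and the minimum is taken over the $\phi$-expansion of $h$. Since $\tilde{\type} \ndivides g$ by hypothesis, computing $v(g(\theta))$ reduces to evaluating this minimum. Geometrically, adding the linear term $s\lambda$ to the one-sided polygon of slope $-\lambda'$ produces an affine function of slope $\lambda - \lambda'$ on $[0, \ell']$, whose minimum over $s$ is attained at $s = \ell'$ if $\lambda \le \lambda'$ (value $\ell'(V_{r+1} + \lambda)$) and at $s = 0$ if $\lambda \ge \lambda'$ (value $\ell'(V_{r+1} + \lambda')$). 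In both cases the result is $\ell'(V_{r+1} + \min(\lambda, \lambda'))$; dividing by $E$ and substituting $\ell' = \deg(g)/\deg(\phi)$ produces the claimed formula.

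The subtle point is the borderline case $\lambda = \lambda'$, where several monomials of the expansion share a common valuation at $\theta$ and cancellation must be ruled out. This is where the hypothesis $\tilde{\type} \ndivides g$ is essential: it guarantees that $\Respol_{v_r,\phi,\lambda}(g)$ does not vanish at the residue class corresponding to $\theta$, so the sum of the terms of minimal valuation does not collapse. This compatibility between the augmented valuation $v_{\tilde{\type}}$ and the actual valuation at $\theta$ is precisely the content of the cited Proposition 3.5 and is the main external input to the proof; once it is in place, the rest is a direct Newton-polygon computation.
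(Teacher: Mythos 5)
Your proof is correct and follows essentially the same route as the paper. The paper itself gives no argument beyond the one-line remark that the result ``follows from Theorem \ref{thm:type_F_props} and \cite[Prop.\ 3.5]{Guardia:2012:hn}''; what you have written out is exactly the Newton-polygon computation that this citation elides: Theorem \ref{thm:type_F_props} applied to $g$ gives the one-sidedness of $\Newton_{v_r,\phi}(g)$ and identifies its length with $\deg(g)/\deg(\phi)$, the cited Prop.\ 3.5 identifies $E\cdot\vt{g}$ with $\min_s\bigl(v_r(a_s\phi^s)+s\lambda\bigr)$ when $\tilde{\type}\ndivides g$, and the piecewise-linear minimisation over $[0,\ell']$ yields $\ell'\bigl(V_{r+1}+\Min\set{\lambda,\lambda'}\bigr)$ because the two endpoints lie on the polygon. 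Your remark that the hypothesis $\tilde{\type}\ndivides g$ is only genuinely needed when $\lambda=\lambda'$ (to rule out cancellation among the terms of minimal valuation, detected by $\psi_{r+1}\nmid\Respol_{r+1}(g)$) is also the right way to see where the cited proposition is doing real work; when $\lambda\ne\lambda'$ the minimum is attained at a unique endpoint and $\tilde{\type}\ndivides g$ holds automatically since $\Respol_{r+1}(g)$ is then a nonzero constant.
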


\subsection{Non-optimised tree of types}
\label{sec:nonoptimised_tree}

Let us briefly describe how the Montes algorithm constructs the tree $\tree$ of OM representations of the prime factors of the input polynomial $f \in \oox$. Initially, $\fbar$ is factorised in $\Fy$. For each monic irreducible factor $\varphi$ of $\fbar$, a triplet $(\type, \phi, \omega)$ is considered, where $\type = (\varphi)$ is the type of order $0$ determined by $\varphi$, $\phi$ is a representative of $\type$ (that is, a monic lift of $\varphi$ to $\oox$), and $\omega = \ord_{\type}(f) = \ord_{\varphi}(\fbar)$. All these triplets $(\type, \phi, \omega)$ are stored in a stack.

Along the execution of the algorithm the stack always contains triplets $(\type, \phi, \omega)$, where $\type \divides f$, $\phi$ is a representative of $\type$ and $\omega = \ord_{\type}(f)$. The main loop of the algorithm takes such a triplet and attaches to the type $\type$ one or more branches $\type_{\lambda,\psi} := (\type; (\phi, \lambda, \psi))$ of $\type$ such that $\type_{\lambda,\psi} \divides f$ and the pairs $(\lambda,\psi)$ are considered as follows,
\begin{itemize}
	\item $-\lambda$ runs on the slopes of  $\Newton_{v_{\type},\phi}^{\omega}(f) := \Newton_{v_{\type},\phi}(f) \cap \left( [0, \omega] \times \R \right)$.
	\item $\psi$ runs on the prime factors of $\Respol_{v_{\type},\phi,\lambda}(f)$.
\end{itemize}

Let $\phi_{\lambda,\psi} \in \oox$ be a representative of $\type_{\lambda,\psi}$ and take $\omega_{\lambda,\psi} = \ord_{\psi}(\Respol_{v_{\type},\phi,\lambda}(f)) = \ord_{\type_{\lambda,\psi}}(f)$. If this positive integer is equal to one, then $\type_{\lambda,\psi}$ divides only one of the prime factors $F_{\p}$ of $f$ in $\oovx$. In this case, we add a final level to $\type$ to construct the leaf $\type_{\p}$ associated with this prime factor. On the other hand, if $\omega_{\lambda,\psi} > 1$, then the triplet $(\type_{\lambda,\psi}, \phi_{\lambda,\psi}, \omega_{\lambda,\psi})$ is pushed back onto the stack to bare further branching in future iterations of the main loop.

After a finite number of iterations of this process, the algorithm outputs a list $\type_{1}, \dots, \type_{N}$ of types parametrising  the prime factors of $f$ in $\oovx$. Let us denote by $\treenop$ the tree of types obtained by this procedure. Note that for every node $\type \in \treenop$ which is not a leaf, the edges with left end point $\type$ have the same $\phi$-polynomial; a tree of types with this property is said to be \emph{coherent}.

\begin{figure}[h]
	\centering
	\begin{tikzpicture}[scale=0.9]
		\path [use as bounding box] (0.4, -2.05) rectangle (5.75, 2.05);

		\draw[anchor=south] (2, 0.1) node {$\type$};
		\filldraw (2, 0) circle (2.5pt);
		
		\filldraw (5, 1.5) circle (2.5pt);
		\filldraw (5, 0) circle (2.5pt);
		\filldraw (5, -1.5) circle (2.5pt);
		
		\draw (1, 0) -- (2, 0);
		\draw (2, 0) -- (5, 1.5);
		\draw (2, 0) -- (5, 0);
		\draw (2, 0) -- (5, -1.5);

		\draw (0.7, 0) node {$\cdots$};
		
		\draw (5.5, 0) node {$\cdots$};
		\draw (5.4, 0.45) node {\reflectbox{$\ddots$}};
		\draw (5.4, -0.2) node {$\ddots$};

		\draw (5.5, 1.5) node {$\cdots$};
		\draw (5.4, 1.95) node {\reflectbox{$\ddots$}};
		\draw (5.4, 1.3) node {$\ddots$};

		\draw (5.5, -1.5) node {$\cdots$};
		\draw (5.4, -1.05) node {\reflectbox{$\ddots$}};
		\draw (5.4, -1.7) node {$\ddots$};
		
		\draw[anchor=south] (4, 0) node {$(\phi, \lambda', \psi')$};
		\draw[anchor=south east] (3.7, 0.7) node {$(\phi, \lambda, \psi)$};
		\draw[anchor=north east] (3.7, -0.7) node {$(\phi, \lambda'', \psi'')$};

	\end{tikzpicture}
	\caption{A segment of the non-optimised tree $\treenop$.}
	\label{fig:tree_coherent}
\end{figure}

This describes a kind of ``non-optimised'' Montes algorithm, yielding a ``non-optimised'' tree of types. The types $\type_{\lambda,\psi}$ may not be optimal. In fact, if $\lambda \in \Z$ and $\deg \psi = 1$, we have
\begin{align*}
	\deg \phi_{\lambda,\psi} &= e_{\lambda} \cdot \deg \psi \cdot \deg \phi = \deg \phi,
\end{align*}
where $e_{\lambda}$ is the positive denominator of $\lambda$. We must avoid this situation, because the numerical data attached to the types will not be intrinsic data of the prime factors of $f$.

For this reason, the Montes algorithm includes a ``refinement procedure'' which ensures that it only stores optimal types. However, a price must be paid; the output tree of OM representations is no longer coherent.

The optimised tree of OM representations (the real output of the Montes algorithm) may be derived from the non-optimised tree by an iterative application of the following transformation. Any path,
\begin{align}
	\label{eq:path_non_optimised}
	\begin{aligned}
	\begin{tikzpicture}[scale=0.9]
		\filldraw (-1.5, 0) circle (2.5pt);
		\draw[anchor=south east] (-1.65, -0.15) node {$\type$};
		\filldraw (1, 0) circle (2.5pt);
		\filldraw (3.5, 0) circle (2.5pt);
		\draw[dotted] (-1.5, 0) -- (1, 0) -- (3.5, 0);
		\draw[anchor=south] (-0.25, 0) node {$(\phi_{1}, \lambda_{1}, \psi_{1})$};
		\draw (4.05, 0) node {$\cdots$};
		\filldraw (4.5, 0) circle (2.5pt);
		\filldraw (7, 0) circle (2.5pt);
		\filldraw (9.5, 0) circle (2.5pt);
		\draw[dotted] (4.5, 0) -- (7, 0);
		\draw (7, 0) -- (9.5, 0);
		\draw[anchor=south] (8.25, 0) node {$(\phi_{n}, \lambda_{n}, \psi_{n})$};
		\draw[anchor=south west] (9.65, -0.15) node {$\type'$};
	\end{tikzpicture}
	\end{aligned}
\end{align}
in which all edges except for the final one are bad edges satisfying $\lambda_{i} \in \Z$, $\deg \psi_{i} = 1$ for $1 \leq i < n$, collapses into
\begin{align}
	\label{eq:path_optimised}
	\begin{aligned}
		\begin{tikzpicture}[scale=0.9]
			\filldraw (-2, 0) circle (2.5pt);
			\draw[anchor=south east] (-2.15, -0.15) node {$\type$};
			\filldraw (0.5, 0) circle (2.5pt);
			\draw (-2, 0) -- (0.5, 0);
			\draw[anchor=south] (-0.75, 0) node {$(\phi_{n}, \lambda^{*}, \psi_{n})$};
			\draw[anchor=south west] (0.65, -0.15) node {$\type''$};
			\draw[anchor=south west] (2.0, -0.2) node {with $\lambda^{*} = \lambda_{1} + \cdots + \lambda_{n}.$};
		\end{tikzpicture}
	\end{aligned}
\end{align}
The types $\type'$ and $\type''$ are ``equivalent'', and this means that $\PP(\type) = \PP(\type')$, where $\PP(\type)$ is the set of prime polynomials $g \in \oovx$ which are divisible by $\type$ \cite[Thm. 3.7]{Nart:2014:equiv}.

The existence of the non-optimised tree is useful in many situations. Let us see an example.

\begin{lemma}
	\label{lem:coherent_tree}
	Let $\type, \type' \in \treenop$. If $\type$ is a truncation of $\type'$, then $\PP(\type) \supset \PP(\type')$. If neither of these types is a truncation of the other, then $\PP(\type) \cap \PP(\type') = \emptyset$.
\end{lemma}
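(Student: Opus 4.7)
The first statement is essentially immediate: if $\type = \Trunc_s(\type')$ for some $s$, then for any prime polynomial $g \in \PP(\type')$ Theorem \ref{thm:type_divides_F} guarantees that every truncation of $\type'$ divides $g$, so in particular $\type \divides g$ and $g \in \PP(\type)$.

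For the second statement, I would first locate the greatest common truncation $\tau$ of $\type$ and $\type'$, say of order $s$; by hypothesis, both $\type$ and $\type'$ have order at least $s+1$ and their $(s+1)$-st levels are distinct. Because $\treenop$ is coherent, the edges from $\tau$ toward $\type$ and toward $\type'$ carry the \emph{same} representative $\phi$ of $\tau$, so the two $(s+1)$-st levels have the form $(\phi, \lambda, \psi)$ and $(\phi, \lambda', \psi')$ with $(\lambda, \psi) \ne (\lambda', \psi')$. Supposing for contradiction that $g \in \PP(\type) \cap \PP(\type')$, the first part of the lemma applied to $\Trunc_{s+1}(\type)$ and $\Trunc_{s+1}(\type')$ shows that the prime polynomial $g$ is divisible by both $(\tau;(\phi,\lambda,\psi))$ and $(\tau;(\phi,\lambda',\psi'))$, hence by $\tau$.

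The key step is then Theorem \ref{thm:type_F_props}, applied with $\tau$ in the role of $\type$ and $g$ in the role of $F$ (and provisionally assuming $g \ne \phi$). It yields a unique positive rational $\lambda^*$ such that $\Newton_{v_\tau,\phi}(g)$ is one-sided of slope $-\lambda^*$, together with a unique monic irreducible $\psi^* \in \F_{s+1}[y]$, $\psi^* \ne y$, such that $\Respol_{v_\tau,\phi,\lambda^*}(g)$ is a power of $\psi^*$. For $(\tau;(\phi,\lambda,\psi))$ to divide $g$, the line determining $\Respol_{v_\tau,\phi,\lambda}$ must meet the Newton polygon non-trivially and $\psi$ must divide the residual polynomial; this forces $(\lambda,\psi)=(\lambda^*,\psi^*)$, and symmetrically $(\lambda',\psi')=(\lambda^*,\psi^*)$. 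Hence $(\lambda,\psi)=(\lambda',\psi')$, contradicting the choice of $\tau$.

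The main obstacle is the degenerate case $g = \phi$, which is excluded from the hypotheses of Theorem \ref{thm:type_F_props}. This I would treat by hand: the $\phi$-expansion of $\phi$ consists of a single term, so $\Newton_{v_\tau,\phi}(\phi)$ and $\Respol_{v_\tau,\phi,\lambda}(\phi)$ can be written down explicitly, and one verifies directly that at most one pair $(\lambda,\psi)$ can make $(\tau;(\phi,\lambda,\psi))$ divide $\phi$ itself. With that boundary case resolved, the uniqueness of the one-level extension of $\tau$ dividing a given prime polynomial closes the argument, reflecting the structural fact that guarantees $\treenop$ is genuinely a tree.
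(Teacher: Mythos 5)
Your proof takes essentially the same route as the paper's: reduce to the branching node, use coherence of $\treenop$ to extract a common representative $\phi$, and invoke the uniqueness of slope and residual polynomial (via Theorem~\ref{thm:type_F_props}/\ref{thm:type_divides_F}) to conclude that two distinct branches at that node cannot both divide the same prime polynomial.

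There is, however, a genuine (if small) gap: your argument presupposes the existence of a greatest common truncation $\tau$ of $\type$ and $\type'$. When $\type$ and $\type'$ have different root nodes no such $\tau$ exists, and your branching argument does not get started. The paper handles this case separately and trivially, observing that every $F \in \PP(\type)$ has $\overline{F}$ equal to a power of the irreducible $\psi_0$ attached to the root node of $\type$, which is incompatible with $F$ also lying in $\PP(\type')$ if the root nodes differ. You should add this observation before locating $\tau$. Conversely, your worry about the degenerate case $g = \phi$ turns out to be unnecessary: the $\phi$-expansion of $\phi$ itself is the single term $\phi$, so $\Newton_{v_\tau,\phi}(\phi)$ is a single point and $\Respol_{v_\tau,\phi,\lambda}(\phi)$ has degree zero for every $\lambda$, hence no $\psi \ne y$ of positive degree can divide it and $\phi$ cannot belong to $\PP\bigl((\tau;(\phi,\lambda,\psi))\bigr)$ for any branch. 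In other words $g = \phi$ never occurs among the polynomials under consideration, which is why the paper does not comment on it. Your ``verify directly'' instinct is sound, but once the verification is done, the case evaporates rather than needing a parallel argument.
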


\begin{proof}
	The first statement is an immediate consequence of Theorem \ref{thm:type_divides_F}.

	The second statement is obvious if $\type$ and $\type'$ have different root nodes, because for all $F \in \PP(\type)$, the reduction $\overline{F}$ modulo $\m$ is a power of the monic irreducible polynomial $\psi_{0}$ corresponding to the root node of $\type$.
	
	Suppose that $\type, \type'$ have the same root node and let $\type_{0}$ be the greatest common node in the paths joining $\type, \type'$ with their root node. By the first statement we may assume that $\type$ and $\type'$ are branches of $\type_{0}$, in other words, that $\type_{0}$ is the previous node of both $\type$ an $\type'$. By the coherence of $\treenop$ we have
	\begin{align*}
		\type &= (\type_{0}; (\phi, \lambda, \psi)), \qquad
		\type' = (\type_{0}; (\phi, \lambda', \psi')),
	\end{align*}
	where either $\lambda \neq \lambda'$ or $\lambda = \lambda', \psi \neq \psi'$.
	
	Let $r$ be the order of $\type_{0}$ and $v_{r}$ its attached valuation. Now, for any $F \in \PP(\type)$, $F' \in \PP(\type')$, Theorem \ref{thm:type_divides_F} shows that $\Newton_{v_{r},\phi}(F)$ and $\Newton_{v_{r},\phi}(F')$ are one-sided of slopes $-\lambda$ and $-\lambda'$ respectively. Hence, $\lambda \neq \lambda'$ implies $F \neq F'$. On the other hand, if $\lambda = \lambda'$ then $\Respol_{\type_{0},\phi,\lambda}(F) = \psi^{a}$ and $\Respol_{\type_{0},\phi,\lambda}(F') = (\psi')^{a'}$ and this implies $F \neq F'$, because $\psi \neq \psi'$.
\end{proof}

This result may be false for arbitrary incoherent trees. However, Lemma \ref{lem:coherent_tree} is valid for the optimised tree $\tree$ of OM representations of the prime factors of $f$.

\begin{proposition}
	\label{prop:type_empy_intersect}
	Let $\type, \type' \in \tree$ be two nodes such that neither of them is a truncation of the other. Then $\PP(\type) \cap \PP(\type') = \emptyset$. In particular, $\pp_{\type} \cap \pp_{\type'} = \emptyset$.
\end{proposition}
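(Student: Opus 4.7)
The plan is to reduce the statement to Lemma \ref{lem:coherent_tree}, which has already been proved for the coherent non-optimised tree $\treenop$, by establishing a $\PP$-preserving correspondence between nodes of $\tree$ and nodes of $\treenop$.

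First I would set up the correspondence. Every node of $\tree$ arises from a node of $\treenop$ by iteratively applying the collapsing transformation \eqref{eq:path_non_optimised}$\leadsto$\eqref{eq:path_optimised}. Since the endpoints $\type'$ and $\type''$ of such a collapse are equivalent in the sense of \cite[Thm.~3.7]{Nart:2014:equiv}, we have $\PP(\type'')=\PP(\type')$. Iterating, for every $\type\in\tree$ there is a canonical node $\widetilde{\type}\in\treenop$ (the endpoint of the fully expanded path from the root) with $\PP(\type)=\PP(\widetilde{\type})$. This assignment $\type\mapsto\widetilde{\type}$ is injective, and the root nodes coincide.

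Next I would verify that the correspondence respects the truncation order. The collapse operation only removes interior nodes of ``bad'' paths (those with $\lambda_i\in\Z$ and $\deg\psi_i=1$) while keeping the endpoints; so a node $\sigma\in\tree$ is a truncation of $\type\in\tree$ if and only if $\widetilde{\sigma}$ lies on the path from the root to $\widetilde{\type}$ in $\treenop$, i.e.\ $\widetilde{\sigma}$ is a truncation of $\widetilde{\type}$. Consequently, if neither $\type$ nor $\type'$ is a truncation of the other in $\tree$, then neither $\widetilde{\type}$ nor $\widetilde{\type'}$ is a truncation of the other in $\treenop$.

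Finally, Lemma \ref{lem:coherent_tree} applied to $\widetilde{\type},\widetilde{\type'}\in\treenop$ gives $\PP(\widetilde{\type})\cap\PP(\widetilde{\type'})=\emptyset$, and the $\PP$-preservation of the correspondence yields $\PP(\type)\cap\PP(\type')=\emptyset$. The statement about $\pp_{\type}\cap\pp_{\type'}$ then follows at once, since each $\p\in\pp_{\type}$ corresponds to a prime factor $F_\p\in\PP(\type)$ via the leaf $\type_\p$ containing $\type$ as a truncation.

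The main obstacle is the verification that the correspondence $\type\mapsto\widetilde{\type}$ preserves truncation in both directions. The subtle point is that the slope $\lambda^*$ of a collapsed edge in $\tree$ differs from any of the original slopes $\lambda_1,\dots,\lambda_n$ in $\treenop$, so one must check that a node of $\tree$ sitting at the end of such a collapsed edge really corresponds to the correct endpoint in $\treenop$ and not to some intermediate bad node. This requires invoking the construction of the refinement procedure carefully, together with the coherence of $\treenop$ to ensure that all bad intermediate edges share the same $\phi$-polynomial so that the collapse is well defined.
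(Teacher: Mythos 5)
Your proposal is correct and follows essentially the same route as the paper: the paper's proof is the terse statement that "$\type, \type'$ are equivalent to two nodes of the non-optimised tree, neither of them being a truncation of the other," and then invokes Lemma \ref{lem:coherent_tree}. You fill in the details of the $\PP$-preserving correspondence $\type\mapsto\widetilde{\type}$ and correctly identify the only point that deserves verification, namely that the correspondence preserves truncation in both directions, which the paper leaves implicit behind the word "Clearly."
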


\begin{proof}
	Clearly, the nodes $\type, \type'$ are equivalent to two nodes of the non-optimised tree, neither of them being a truncation of the other. Thus, the statement is an immediate consequence of Lemma \ref{lem:coherent_tree}.
	
	The final statement is a consequence of $\pp_{\type} = \set{ \p \in \pp : F_{\p} \in \PP(\type) }$.
\end{proof}

Consider the chain of refinements that take place between \eqref{eq:path_non_optimised} and \eqref{eq:path_optimised}. During each refinement that provokes branching of a type, the intermediate $\phi$ and $\lambda$ values are stored.

Let $\type_{\p}, \type_{\q} \in \tree$ be two leaves attached to prime ideals $\p$ and $\q$ with index of coincidence $i(\p, \q) = \ell$. Then suppose that at level $\ell$, each type has a list of stored refinements,
\begin{align}
	\label{eq:refinement_list}
	\begin{aligned}
	\Ref_{\ell}(\type_{\p}) &= \left[ (\phi_{(1)}^{\type_{\p}}, \lambda_{(1)}^{\type_{\p}}, \psi_{(1)}^{\type_{\p}}), \dots, (\phi_{(k)}^{\type_{\p}}, \lambda_{(k)}^{\type_{\p}}, \psi_{(k)}^{\type_{\p}}) \right], \\
	\Ref_{\ell}(\type_{\q}) &= \left[ (\phi_{(1)}^{\type_{\q}}, \lambda_{(1)}^{\type_{\q}}, \psi_{(1)}^{\type_{\q}}), \dots, (\phi_{(k')}^{\type_{\q}}, \lambda_{(k')}^{\type_{\q}}, \psi_{(k')}^{\type_{\q}}) \right].
	\end{aligned}
\end{align}

This allows us to extend the index of coincidence to a more precise indicator.

\begin{definition}
	\label{def:eioc_types}
	The \emph{minor index of coincidence} $\hat{\imath}(\p, \q)$ for two leaves $\type_{\p}, \type_{\q} \in \tree$, is the least index $\ell'$, such that for the refinement lists given in \eqref{eq:refinement_list},
	\begin{align*}
		(\phi_{(\ell')}^{\type_{\p}}, \lambda_{(\ell')}^{\type_{\p}}, \psi_{(\ell')}^{\type_{\p}}) &\neq (\phi_{(\ell')}^{\type_{\q}}, \lambda_{(\ell')}^{\type_{\q}}, \psi_{(\ell')}^{\type_{\q}}).
	\end{align*}

	We also define the \emph{extended index of coincidence} of two types as,
	\begin{align*}
		I(\p, \q):= [ i(\p, \q), \hat{\imath}(\p, \q) ].
	\end{align*}
	These extended indices of coincidence are ordered lexicographically.
\end{definition}

\begin{definition}
	\label{def:cphi_hiddenslope}
	Let $\type_{\p}, \type_{\q} \in \tree$ be two leaves with index of coincidence $i(\p, \q) = \ell$ and let the list of refinements of each type at level $\ell$ be as in \eqref{eq:refinement_list}.
	\begin{enumerate}
		\item The \emph{greatest common $\phi$-polynomial} of the prime ideals $\p, \q$ is $\phi(\p, \q) = \phi_{(j)}^{\type_{\p}} = \phi_{(j)}^{\type_{\q}}$, with $j$ maximal.
	
		\item The (non-optimised) \emph{hidden slopes} of the prime ideals $\p, \q$ are $\clamnop{\p}{\q} = \lambda_{(j)}^{\type_{\p}}$ and $\clamnop{\q}{\p} = \lambda_{(j)}^{\type_{\q}}$, for this maximal value of $j$.
		
		\item The (opimised) hidden slopes of the prime ideals $\p, \q$ are $\clam{\p}{\q} = \lambda_{(1)}^{\type_{\p}} + \cdots + \lambda_{(j)}^{\type_{\p}}$ and $\clam{\q}{\p} = \lambda_{(1)}^{\type_{\q}} + \cdots + \lambda_{(j)}^{\type_{\q}}$, for the same maximal value of $j$.
	\end{enumerate}
\end{definition}

\begin{remark}
	\label{rmk:nonop_valuations}
	(1) By \eqref{eq:path_optimised} $\lambda_{\ell,\p} = \sum_{i=1}^{k} \lambda_{(i)}^{\type_{\p}}$. In particular $\lambda_{\ell,\p} \ge \clam{\p}{\q}$, for all $\q \in \pp$ with $i(\p, \q) = \ell$.
	
	(2) Proposition \ref{prop:explicit_valuations} is easily deduced from Theorem \ref{thm:other_poly_value}. Since this theorem is valid for arbitrary types, it is clear that the formulas in Proposition \ref{prop:explicit_valuations} are valid for the $\phi$-polynomials of the non-optimised tree just by replacing the optimised hidden slopes with the non-optimised ones.
\end{remark}

\subsection{Optimal polynomials as products of $\phi$-polynomials}
\label{sec:optimal_polynomials}

Let $S \subseteq \pp$ be a subset of prime ideals. Let $\tree_{S}$ be the tree of OM representations of the prime ideals $\p \in S$ computed by the Montes algorithm. We keep the content of Section \ref{sec:montes} concerning the data attached to the different types $\type_{\p}$ for $\p \in S$. We recall that the polynomials $\phi_{\p} \in \oox$ are concrete choices of Okutsu approximations to the prime factors $F_{\p}$ of $f$.

The $\phi$-polynomials for all the prime ideals generate a semigroup.

\begin{definition}
	Let $S \subseteq \pp$ be a set of prime ideals. We denote by $\Phi(S) \subset \oox$ the multiplicative semigroup generated by
	\begin{align*}
		\set{ \phi_{i,\p} : \p \in S,\  0 \le i \le r_{\p} } \cup \bigcup_{\p \in S} \App(\p),
	\end{align*}
	where $\App(\p) = \set{ \phi \in \oox \text{ monic of degree } n_{\p} \text{ such that } \wt[\p]{\phi} \ge \wt[\p]{\phi_{\p}} }$.
		
	We use $\Phi(\p)$ to denote $\Phi(\set{\p})$.
\end{definition}

We are interested in showing that we can restrict our search for polynomials of a given degree $d$ with maximal $w_{S,I}$-value to those in the semigroup $\Phi(S)$.

\begin{definition}
	\label{def:degree_adjusted}
	Let $g \in \oox$. The \emph{degree adjusted} $w_{\p}$-valuation of the element $g(\theta) \in \ooL$ is defined as
	\begin{align*}
		\wdt[\p]{g} &:= \frac{\wt[\p]{g}}{\deg g}.
	\end{align*}
\end{definition}

\begin{lemma}
	\label{lem:nop_tree_vals}
	Let $\type$ be a node in the non-optimised tree $\treenop$ and let $g, h \in \oovx$ be two prime polynomials divisible by $\type$. Then, for any prime ideal $\p \in \pp \setminus \pp_{\type}$ we have $\wdt[\p]{g} = \wdt[\p]{h}$.
\end{lemma}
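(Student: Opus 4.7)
The plan is to locate both $\type$ and the leaf $\type_\p$ in $\treenop$ and apply Theorem \ref{thm:other_poly_value} at their greatest common ancestor. Since $\p \in \pp_{\type_\p}$ but $\p \notin \pp_\type$, the node $\type$ is not a truncation of $\type_\p$; and since $\type_\p$ is a leaf of $\treenop$, it cannot be a proper truncation of $\type$ either. So the two nodes are incomparable in $\treenop$, and I split into two cases according to whether they lie in the same connected component.

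In the degenerate case where $\type$ and $\type_\p$ have distinct root nodes $\psi_0$, $\psi_{0,\p}$, the reduction $\overline g$ is a power of $\psi_0$, while any root $\theta_\p$ of $F_\p$ in $\Kvbar$ has non-negative valuation (since $F_\p$ is monic) and its residue $\overline{\theta_\p}$ is a root of $\overline{F_\p}$, itself a power of $\psi_{0,\p}$. Coprimality of $\psi_0$ and $\psi_{0,\p}$ in $\F[y]$ forces $\overline g(\overline{\theta_\p}) \ne 0$, so $v(g(\theta_\p)) = 0$ and $\wdt[\p]{g} = 0$; the same argument applies verbatim to $h$.

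For the main case, let $\type_0$ of order $r \ge 0$ be the greatest common ancestor of $\type$ and $\type_\p$ in $\treenop$. By coherence of $\treenop$, all edges out of $\type_0$ carry the same $\phi$-polynomial $\phi$; write the next edges towards $\type$ and $\type_\p$ as $(\phi, \lambda', \psi')$ and $(\phi, \lambda, \psi)$ respectively, with $(\lambda',\psi')\ne(\lambda,\psi)$. Applying Theorem \ref{thm:type_divides_F} to the truncation $(\type_0;(\phi,\lambda',\psi'))$ of $\type$ gives that this type divides $g$, so $\Newton_{v_r,\phi}(g)$ is one-sided of slope $-\lambda'$. The two types $(\type_0;(\phi,\lambda,\psi))$ and $(\type_0;(\phi,\lambda',\psi'))$ both have order $r+1$ and are distinct, hence incomparable, so Lemma \ref{lem:coherent_tree} ensures $(\type_0;(\phi,\lambda,\psi))\ndivides g$. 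The analogous statements with the roles of $\lambda$ and $\lambda'$ reversed hold for $F_\p$.

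All hypotheses of Theorem \ref{thm:other_poly_value} are then met, taking base type $\type_0$, representative $\phi$, prime polynomial $F = F_\p$, and $g$ as given; identifying $\wt[\p]{g} = v(g(\theta_\p))$ via the $\p$-adic embedding $L \subset \Lp \subset \Kvbar$, I conclude
\begin{equation*}
\wt[\p]{g} = \frac{\deg g}{\deg \phi} \cdot \frac{V_{r+1} + \Min\set{\lambda,\lambda'}}{\eto{r}},
\end{equation*}
and the right hand side is $\deg g$ times a constant depending only on $\type_0$ and on the two edges leaving it. Applying the same formula to $h$ yields the identical constant, so $\wdt[\p]{g} = \wdt[\p]{h}$. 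The delicate point is that the argument has to be carried out in $\treenop$ rather than in $\tree$: coherence of the non-optimised tree is precisely what supplies a common $\phi$ at $\type_0$ and permits the comparison of the two branches through a single application of Theorem \ref{thm:other_poly_value}.
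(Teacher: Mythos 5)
Your proof is correct and follows essentially the same route as the paper's: split on whether $\type$ and $\type_\p$ share a root node, locate the greatest common ancestor $\type_0$ in $\treenop$, invoke coherence to get a single representative $\phi$ below which both branches depart, use Theorem~\ref{thm:type_divides_F} to pin down the slope of $\Newton_{v_r,\phi}(g)$ (and $\Newton_{v_r,\phi}(h)$), and conclude via Theorem~\ref{thm:other_poly_value} that the degree-adjusted valuation is a constant determined by $\type_0$ and the two outgoing edges. The only cosmetic differences are that you cite Lemma~\ref{lem:coherent_tree} directly rather than its corollary Proposition~\ref{prop:type_empy_intersect} (slightly cleaner, since the relevant nodes live in $\treenop$), and in the disconnected-root case you spell out the residue-field argument that the paper compresses into one line.
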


\begin{proof}
	If $\type$ and $\type_{\p}$ have different root nodes, we have $\wt[\p]{g} = 0 = \wt[\p]{h}$, because $\overline{F}_{\p}$ is a power of $\psi_{0,\p}$ and $\overline{g}$, $\overline{h}$ are powers of the root node of $\type$.

	If $\type$ and $\type_{\p}$ have the same root node, let $\type_{0}$ be the greatest common node in the paths of $\treenop$ joining $\type_{\p}$ and $\type$ with the root node. Since $\p \not\in \pp_{\type}$, the node $\type_{0}$ cannot be equal to $\type$. Since $\type_{\p}$ is a leaf of the tree, $\type_{0}$ cannot be equal to $\type_{\p}$ either. The structure of the non-optimised tree is shown in Figure \ref{fig:descendents_of_mm}.
	
	\begin{figure}[htb]
		\centering
		\begin{tikzpicture}[scale=0.7]
			\draw[anchor=east] (0, 0) node {$\psi_{0}$};
			
			\filldraw (0, 0) circle (2.5pt);
			\filldraw (2, 0) circle (2.5pt);
			\filldraw (4, 0) circle (2.5pt);
			\filldraw (6, 0) circle (2.5pt);
			
			\draw (0, 0) -- (2, 0);
			\draw (3, 0) node {$\cdots$};
			\draw (4, 0) -- (6, 0);
			\draw[anchor=south] (6, 0.2) node {$\type_{0}$};
			
			\filldraw (8, 1) circle (2.5pt);
			\filldraw (8, -1) circle (2.5pt);
			
			\filldraw (10, 1) circle (2.5pt);
			\filldraw (12, 1) circle (2.5pt);
			\filldraw (14, 1) circle (2.5pt);
			\filldraw (10, -1) circle (2.5pt);
			\filldraw (12, -1) circle (2.5pt);
			
			\draw (6, 0) -- (8, 1);
			\draw (9, 1) node {$\cdots$};
			\draw (10, 1) -- (14, 1);
			
			\draw (6, 0) -- (8, -1);
			\draw (9, -1) node {$\cdots$};
			\draw (10, -1) -- (12, -1);
	
			\draw[anchor=south west] (7.5, 1.2) node  {$\type'$};
			\draw[anchor=south west] (7.5, -1.9) node {$\type''$};
			\draw[anchor=south west] (13.6, 1.07) node {$\type_{\p}$};
			\draw[anchor=south west] (11.7, -1.9) node {$\type$};
		\end{tikzpicture}
		\caption{The node $\type_{0}$ is the greatest common node of $\type$ and $\type_{\p}$.}
		\label{fig:descendents_of_mm}
	\end{figure}
	
	Let $\type', \type''$ be the nodes following $\type_{0}$ in each of the two paths. Since the non-optimised tree is coherent, we have
	\begin{align*}
		\type' &= (\type_{0}; (\phi, \lambda', \psi')), \qquad
		\type'' = (\type_{0}; (\phi, \lambda'', \psi'')),
	\end{align*}
	with a common choice for the representative $\phi$ of $\type_{0}$.
	
	By Theorem \ref{thm:type_divides_F}, $\Newton_{v_{\type_{0}},\phi}(g)$ and $\Newton_{v_{\type_{0}},\phi}(h)$ are one-sided of slope $-\lambda''$. Proposition \ref{prop:type_empy_intersect} shows that $\type' \ndivides g$, $\type' \ndivides h$. By Theorem \ref{thm:other_poly_value}, we have
	\begin{align*}
		\wdt[\p]{g} &= \frac{1}{\deg \phi} \cdot \frac{V_{r+1} + \Min\set{\lambda', \lambda''}}{\eto{r}} = \wdt[\p]{h},
	\end{align*}
	where $r$ is the order of $\type_{0}$.
\end{proof}

The next result is the main aim of this section.

\begin{proposition}
	\label{thm:optimal_in_phi}
	Let $S \subseteq \pp$ be a set of prime ideals. 	For any $h \in \oovx$ monic of degree $0 \leq d < n$, there exists $\phi \in \Phi(\pp)$ also of degree $d$ such that,
	\begin{align}
		\w[\p]{\phi(\theta)} &\geq \w[\p]{h(\theta)}, \qquad \forall\  \p \in S. \label{eqn:phi_as_good_as_h}
	\end{align}
\end{proposition}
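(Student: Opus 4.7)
The plan is to reduce to the case of prime polynomials by factorisation, and then to construct $\phi$ based on where $h$ lies inside the non-optimised tree $\treenop$. Factoring $h = h_{1} \cdots h_{t}$ in $\oovx$ into monic prime polynomials, both valuations and degrees are additive and $\Phi(\pp)$ is a multiplicative semigroup, so it suffices to prove the statement for a single monic prime $h \in \oovx$ of degree $d$.

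For such a prime $h$, iterating Theorem \ref{thm:type_F_props} from the root of $\treenop$ produces either no node of $\treenop$ dividing $h$, in which case $\overline{h}$ is coprime to $\overline{f}$, $\w[\p]{h(\theta)} = 0$ for every $\p$, and any $\phi \in \Phi(\pp)$ of degree $d$ suffices; or a maximal node $\type \in \treenop$ of some order $r$ dividing $h$, necessarily with $\pp_{\type} \neq \emptyset$. Pick $\p_{0} \in \pp_{\type}$. Since the first $r$ levels of the path to $\type_{\p_{0}}$ in $\tree$ correspond (after optimisation) to $\type$, I build $\phi \in \Phi(\p_{0}) \subseteq \Phi(\pp)$ of degree $d$ as a product of Okutsu-frame polynomials $\phi_{0,\p_{0}}, \phi_{1,\p_{0}}, \ldots, \phi_{r_{\p_{0}},\p_{0}}$ (padded, when $d \geq n_{\p_{0}}$, by elements of $\App(\p_{0})$), chosen so as to be $w_{\p_{0}}$-optimal of its degree; by Theorem \ref{thm:okutsu_basis_is_basis} this $\phi$ (essentially the polynomial $g_{d,\p_{0}}$ of the extended Okutsu basis) already satisfies $\w[\p_{0}]{\phi(\theta)} \geq \w[\p_{0}]{h(\theta)}$.

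It remains to verify $\w[\q]{\phi(\theta)} \geq \w[\q]{h(\theta)}$ for each $\q \in S \setminus \set{\p_{0}}$, splitting on whether $\q \in \pp_{\type}$. When $\q \notin \pp_{\type}$, Lemma \ref{lem:nop_tree_vals} forces $\wdt[\q]{h} = \wdt[\q]{F_{\p_{0}}}$, so $\w[\q]{h(\theta)}$ is determined by $d$ and $\type$ alone, and the desired inequality follows by computing $\w[\q]{\phi(\theta)}$ factor-by-factor via Proposition \ref{prop:explicit_valuations}. When $\q \in \pp_{\type} \setminus \set{\p_{0}}$, both $h$ and $F_{\q}$ share the path up to $\type$ but diverge beyond; Theorem \ref{thm:other_poly_value} bounds $\w[\q]{h(\theta)}$ in terms of $\deg h$, $\type$, and the Newton-polygon slope of $h$ relative to the level-$(r+1)$ representative, while Proposition \ref{prop:explicit_valuations} (invoking the hidden slope $\clam{\p_{0}}{\q}$ at the level $\ell = i(\p_{0},\q)$) computes the matching contribution from the $\phi_{i,\p_{0}}$-factors of $\phi$.

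The main obstacle is this last case, $\q \in \pp_{\type} \setminus \set{\p_{0}}$: both $h$ and the individual factors of $\phi$ contribute nontrivially to $w_{\q}$, and closing the gap between the two estimates requires careful bookkeeping at the branching node $\ell$ via the non-optimised hidden slopes of Definition \ref{def:cphi_hiddenslope} together with the valuation formulas of Remark \ref{rmk:nonop_valuations}.
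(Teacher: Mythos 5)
The high-level structure of your proposal matches the paper's: reduce to prime polynomials, handle the trivial case $\gcd(\overline{f},\overline{h})=1$, locate the maximal node $\type$ of $\treenop$ dividing $h$, and then produce $\phi$ from the data of a single prime $\p_0$ above $\type$. However, the key technical step is left unclosed, and the arbitrary choice of $\p_0 \in \pp_\type$ is a genuine structural flaw that an arbitrary amount of ``careful bookkeeping'' will not repair.

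In the paper, $\p_0$ is not arbitrary: it is taken from the branch of $\type$ with the \emph{maximal} slope $\lmax$, and $\phi$ is taken to be the pure power $\phi_{j,\p_0}^\ell$ of the $\phi$-polynomial sitting at the level of $\type$ (where $\deg h = \ell \deg\phi_\type$), not the Okutsu numerator $g_{d,\p_0}$. The reason the max-slope choice is essential is visible in Theorem \ref{thm:other_poly_value}: for $\p$ in a branch of slope $\lambda$ and $\p_0$ in a branch of slope $\lambda'$, one has $\wt[\p]{\phi_{j,\p_0}}$ governed by $V_{r+1} + \Min\{\lambda,\lambda'\}$, while $\ell^{-1}\wt[\p]{h}$ is governed by $V_{r+1}+\Min\{\lambda_h,\lambda\}$. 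If you pick $\p_0$ from a low-slope branch ($\lambda' < \lambda_h < \lambda$, say), then $\Min\{\lambda,\lambda'\}=\lambda'<\Min\{\lambda_h,\lambda\}$ and the desired inequality $\wt[\p]{\phi}\geq\wt[\p]{h}$ fails for $\p$ in the high-slope branch; choosing $\lmax$ ensures $\Min\{\lambda,\lmax\}=\lambda$ always. Your proposal never imposes this constraint, so it is not merely incomplete but, as written, false.

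There is a second divergence worth flagging: replacing $\phi_{j,\p_0}^\ell$ by the Okutsu numerator $g_{d,\p_0}$ is not a harmless simplification. The paper's proof relies on the fact that $\phi_{j,\p_0}$ is a \emph{single} prime polynomial divisible by $\type$ with $\deg\phi_{j,\p_0}=\deg\phi_\type$, so Lemma \ref{lem:nop_tree_vals} gives the exact identity $\wdt[\q]{\phi_{j,\p_0}}=\wdt[\q]{h}$ for $\q\notin\pp_\type$. The numerator $g_{d,\p_0}$ is a product of several $\phi_{i,\p_0}$ (plus Okutsu approximations when $d\geq n_{\p_0}$), and while the factors with $i\geq j$ do have the right cross-valuations (because $d=\ell m_j$ forces the base-$m$ expansion to be supported on indices $\geq j$), the case $\p\in\pp_\type\setminus\{\p_0\}$ mixes the pathology \eqref{eq:badq} at the index of coincidence with the non-optimised hidden slopes, and you would need to redo the analysis from scratch; the paper's choice sidesteps all of this. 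Your own closing paragraph acknowledges this case is ``the main obstacle,'' but the obstacle is not merely a bookkeeping burden — absent the max-slope choice it cannot be overcome.

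Finally, a minor point: the paper works in $\treenop_S$ rather than $\treenop_\pp$ and splits further into the cases where $\type$ is a leaf (then $S_\type=\{\p_0\}$ and $\phi$ is a power of a sufficiently good Okutsu approximation to $F_{\p_0}$) versus an interior node; your sketch collapses these, which is tolerable, but the leaf case genuinely requires choosing the Okutsu approximation dynamically (with $\wt[\p_0]{\phi_0}\geq\wt[\p_0]{h}/\ell$), which is worth making explicit.
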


\begin{proof}
	The proof will proceed by induction on the degree $d$ of the polynomial. We will work in steps, in each one reducing the space in which we need to consider $h$.
	
	If $d = 0$, then $\phi = h = 1 \in \Phi(S)$.
	
	\begin{claim}
		It is sufficient to check \eqref{eqn:phi_as_good_as_h} for $h$ a prime polynomial.
	\end{claim}
	
	Let $h = h_{1} h_{2}$, with $h_{1}, h_{2} \in \oovx$ monic of degree $d_{1}, d_{2} > 0$ respectively.

	By the induction hypothesis, there exist $\phi_{i} \in \Phi(S)$ of degree $d_{i}$ such that,
	\begin{align*}
		\wt[\p]{\phi_{i}} &\geq \wt[\p]{h_{i}}, 	\qquad \forall\  \p \in S,
	\end{align*}
	for $i = 1, 2$. Then, $\phi = \phi_{1} \phi_{2} \in \Phi(S)$ clearly satisfies \eqref{eqn:phi_as_good_as_h}. This proves the claim.

	Now, assume that $h$ is a prime polynomial. If $\gcd(\overline{f}, \overline{h}) = 1$, then $\wt[\p]{h} = 0$ for all $\p \in S$. Thus, \eqref{eqn:phi_as_good_as_h} is obviously satisfied.

	Therefore, we can assume that $\overline{h} = \psi_{0}^{b}$, $b \in \N$, for $\psi_{0} \in \Fy$ a prime factor of $\fbar$.

	By hypothesis, the root node $\psi_{0}$ (thought of as a type of order zero) divides $h$. Let $\type$ be the highest order node in the non-optimised tree $\treenop_{S}$ such that $\type \divides h$, and let $i$ be the order of $\type$. We distinguish two cases according to $\type$ being a leaf or not.
	
	\begin{proofcase}
		$\type$ is a leaf.
		
		In this case, $S_{\type} = \set{\p_{0}}$ contains only one prime ideal. The $\phi$-polynomial in the last level of $\type$ is an Okutsu approximation $\phi_{\p_{0}}$ to $F_{\p_{0}}$. Since $\type \divides h$, Theorem \ref{thm:type_F_props} shows that $\deg h = \ell \cdot \deg \phi_{\p_{0}} = \ell \cdot n_{\p_{0}}$ for some positive integer $\ell$, and $\vt[\p_{0}]{h} > \vt[\p_{0}]{\phi_{\p_{0}}}$.
				
		Let us consider $\phi_{0} \in \App(\pz)$ and close enough to $F_{\p_{0}}$ so that
		\begin{align*}
			\wt[\p_{0}]{\phi_{0}} &\geq \wt[\p_{0}]{h} / \ell,
		\end{align*}
		and take $\phi = \phi_{0}^{\ell} \in \Phi(S)$. By construction, $\wt[\p_{0}]{\phi} \ge \wt[\p_{0}]{h}$. On the other hand, for any $\p \in S$, $\p \neq \p_{0}$, we clearly have $\wdt[\p]{\phi} = \wdt[\p]{\phi_{0}}$ and Lemma \ref{lem:nop_tree_vals} shows that $\wdt[\p]{\phi_{0}} = \wdt[\p]{h}$. Since $\deg(\phi) = \deg(h)$ we deduce that $\wt[\p]{\phi} = \wt[\p]{h}$. This proves \eqref{eqn:phi_as_good_as_h}.
	\end{proofcase}
	
	\begin{proofcase}
		$\type$ is not a leaf.
		
		For a certain choice $\phi_{\type}$ of a representative of $\type$, the node $\type$ has several branches in the non-optimised tree, of the form
		\begin{align*}
			\type_{\lambda,\psi} = (\type; (\phi_{\type}, \lambda, \psi)).
		\end{align*}
		
		By the maximality of $\type$, we have $\type_{\lambda,\psi} \ndivides h$ for all these branch nodes. Let $\lmax$ be the greatest slope (in absolute size) of these branches and let $\type_{\max}$ be any branch node of $\type$ with slope $\lmax$.

		Since $\type \divides h$, Theorem \ref{thm:type_F_props} shows that $\Newton_{v_{\type},\phi_{\type}}(h)$ is one-sided of slope $-\lambda_{h}$ and $\deg(h) = \ell \deg(\phi_{\type})$, for certain positive $\lambda_{h} \in \Q$, $\ell \in \Z$.

		If for some $\p_{0} \in S_{\type}$ we take $\phi_{j,\p_{0}}$ satisfying:
		\begin{align}
			\label{eq:phij_reqs}
			\deg(\phi_{j,\p_{0}}) = \deg(\phi_{\type}), \qquad \type \divides \phi_{j,\p_{0}},
		\end{align}
		then Lemma \ref{lem:nop_tree_vals} shows that $\wdt[\p]{h} = \wdt[\p]{\phi_{j,\p_{0}}}$ for all $\p \not\in S_{\type}$. As in Case 1, for $\phi = \phi_{j,\p_{0}}^{\ell} \in \Phi(S)$ this implies $\wt[\p]{h} = \wt[\p]{\phi}$ for all $\p \not\in S_{\type}$. Therefore, we need only to find some $\phi_{j,\p_{0}}$ satisfying \eqref{eq:phij_reqs} and $\wt[\p]{\phi_{j,\p_{0}}} \ge \ell^{-1} \wt[\p]{h}$ for all $\p \in S_{\type}$. Then we shall have \eqref{eqn:phi_as_good_as_h}.

%
%
		
		Let $\type'$ be any node of the optimised tree which has been derived from $\type_{\max}$ by a series of refinement steps as indicated in \eqref{eq:path_non_optimised} and \eqref{eq:path_optimised}.
					
		\begin{figure}[htb]
			\centering
			\begin{tikzpicture}[scale=0.7]
				\draw[anchor=east] (-0.2, 0) node {$\type$};
				
				\filldraw (0, 0) circle (2.5pt);
				\filldraw (3, 1.5) circle (2.5pt);
				\filldraw (3, 0.5) circle (2.5pt);
				\filldraw (3, -1.5) circle (2.5pt);
				
				\draw[anchor=west] (3.1, -1.5) node {$\type_{\lambda,\psi}$};
				
				\filldraw (6, 1.5) circle (2.5pt);
				\filldraw (8, 1.5) circle (2.5pt);
				\filldraw (11, 1.5) circle (2.5pt);
		
				\draw (0, 0) -- (3, 1.5) -- (6, 1.5);
				\draw (0, 0) -- (3, 0.5);
				\draw (3.02, -0.25) node {$\vdots$};
				\draw (0, 0) -- (3, -1.5);

				\draw (7, 1.5) node {$\cdots$};
				\draw (8, 1.5) -- (11, 1.5);
				
				\draw[anchor=south] (9.5, 1.5) node {$(\phi_{0}, \lambda_{0}, \psi_{0})$};
				
				\draw[anchor=south west] (2.2, 1.6) node {$\type_{\max}$};
				\draw[anchor=west] (11.1, 1.5) node {$\type'$};		
			\end{tikzpicture}
			\caption{The node $\type'$ corresponds to a node of the optimised tree.}
			\label{fig:nmax_nop}
		\end{figure}

		Let $(\phi_{0}, \lambda_{0}, \psi_{0})$ be the last level of $\type'$ in the non-optimised tree. As explained in Section \ref{sec:nonoptimised_tree} the last level of $\type'$ as a type from the optimised tree will be $(\phi_{0}, \lambda_{0}^{*}, \psi_{0})$, where $\lambda_{0}^{*}$ is the sum of all the slopes of all bad levels between $\type'$ and its previous node in the optimised tree.
		
		Thus, $\phi_{0} = \phi_{j,\p_{0}}$ for all $\p_{0} \in S_{\type'}$, where $j$ is the order of $\type'$ as a type of the optimised tree. Clearly, $\phi_{j,\p_{0}}$ satisfies \eqref{eq:phij_reqs}; let us compare $\wt[\p]{\phi_{j,\p_{0}}}$ and $\wt[\p]{h}$ for $\p \in S_{\type}$. Take $\p \in S_{\type}$ and let $\type_{\lambda,\psi}$ be the unique branch of $\type$ such that $\type_{\lambda,\psi} \divides F_{\p}$. By Theorem \ref{thm:other_poly_value},
		\begin{align*}
			\ell^{-1} \wt[\p]{h} = \frac{V_{i+1} + \Min\set{\lambda_{h}, \lambda}}{\eto{i}} \le \frac{V_{i+1} + \lambda}{\eto{i}}.
		\end{align*}
		Thus, we need only to show that
		\begin{align*}
			\wt[\p]{\phi_{0}} \ge \frac{V_{i+1} + \lambda}{\eto{i}}.
		\end{align*}

		Suppose that $\type_{\max}$ gives rise to a node of the optimised tree. In this case, we have $\type_{\max} = \type'$ and $\phi_{0} = \phi_{\type}$. By Theorem \ref{thm:type_F_props}, $\wt[\p]{\phi_{0}} = (V_{i+1} + \lambda)/(\eto{i})$. From now on we assume that $\type_{\max} \ne \type'$.

		If $\type_{\lambda,\psi} \ne \type_{\max}$, then $\type \divides \phi_{0}$, $\type_{\lambda,\psi} \ndivides \phi_{0}$ by Proposition \ref{prop:type_empy_intersect}; hence Theorem \ref{thm:other_poly_value} shows that
		\begin{align*}
			\wt[\p]{\phi_{0}} = \frac{V_{i+1} + \Min\set{\lambda, \lmax}}{\eto{i}} = \frac{V_{i+1} + \lambda}{\eto{i}}.
		\end{align*}
		Finally, suppose that $\type_{\lambda,\psi} = \type_{\max}$. The order of $\type'$ as a type of the non-optimised tree is $\ge i+2$. By Proposition \ref{prop:explicit_valuations} applied to the non-optimised tree (see Remark \ref{rmk:nonop_valuations}) we have
		\begin{align*}
			\wt[\p]{\phi_{0}} > \frac{V_{i+2,\p}}{\eto{i} e_{i+1,\p}}
				= \frac{e_{i+1,\p} f_{i+1,\p}(e_{i+1,\p} V_{i+1} + h_{i+1,\p})}{\eto{i} e_{i+1,\p}}
				= \frac{V_{i+1} + \lambda}{\eto{i}},
		\end{align*}
		because  $e_{i+1,\p} = f_{i+1,\p} = 1$ and $\lambda = h_{i+1,\p}$. Thus, in all cases we obtain the desired inequality.
		\qedhere
	\end{proofcase}
\end{proof}

\subsection{Optimal polynomials as products of numerators of Okutsu bases}

By Proposition \ref{prop:explicit_valuations}, for any $\p \in S$ we have
\begin{align}
	\label{eq:strict_val_inc}
	\wdt[\p]{\phi_{i,\p}} &=  \frac{1}{m_{i}} \frac{V_{i} + \lambda_{i}}{\eto{i-1}} = \frac{1}{m_{i+1}} \frac{V_{i+1}}{\eto{i}} < \wdt[\p]{\phi_{i+1,\p}}, &1 \le i \le r_{\p}.
\end{align}

Let us analyse how closely we can replicate this inequality \eqref{eq:strict_val_inc} for cross valuations, that is to say when the $\phi$-polynomial belongs to a different prime to that of the valuation. The next results follow closely form Proposition \ref{prop:explicit_valuations} too.

\begin{lemma}
	\label{lem:mostly_increasing_cross_values}
	Let $\p, \q \in \pp$ be two different prime ideals with index of coincidence $\ell = i(\p, \q)$. Then:
	\begin{enumerate}
		\item $\wdt[\p]{\phi_{i,\q}} < \wdt[\p]{\phi_{i+1,\q}}$, \quad $1 \leq i < \ell$,
		\item $\wdt[\p]{\phi_{i,\q}} = \wdt[\p]{\phi_{i+1,\q}}$, \quad $\ell < i \leq r_{\p}$.
	\end{enumerate}
\end{lemma}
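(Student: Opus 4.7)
The plan is to reduce everything to the explicit formulas of Proposition \ref{prop:explicit_valuations} and the relation $\wdt[\p]{\phi_{i,\q}} = \wt[\p]{\phi_{i,\q}}/m_{i,\q}$. Throughout, I use that for $i \le \ell$ the invariants of the two types at levels below $\ell$ coincide, so in particular $\phi_{i,\p} = \phi_{i,\q}$, $m_{i,\p} = m_{i,\q} =: m_{i}$, $e_{i,\p}=e_{i,\q}=:e_{i}$, etc.

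For part (1), I would split into two subcases according to whether $i+1 < \ell$ or $i+1 = \ell$. When $i+1 < \ell$, both $\phi_{i,\q}$ and $\phi_{i+1,\q}$ lie in the common initial segment of the two types, so Proposition \ref{prop:explicit_valuations} gives exactly the formulas for $\wt[\p]{\phi_{i,\p}}$ and $\wt[\p]{\phi_{i+1,\p}}$, and the strict inequality is then \eqref{eq:strict_val_inc}. The subtle case is $i+1=\ell$: then $\phi_{\ell,\q}$ is evaluated by one of the two ``$i=\ell$'' branches of Proposition \ref{prop:explicit_valuations}. In either branch, using $m_{\ell,\q}=m_{\ell}$, we get
\begin{align*}
	\wdt[\p]{\phi_{\ell,\q}} &= \frac{V_{\ell} + \mu}{m_{\ell}\, e_{1}\cdots e_{\ell-1}},
\end{align*}
where $\mu\in\set{\clam{\p}{\q},\minclam{\p}{\q}}$ is a strictly positive rational. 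Comparing with
\begin{align*}
	\wdt[\p]{\phi_{\ell-1,\q}} &= \frac{V_{\ell-1}+\lambda_{\ell-1}}{m_{\ell-1}\,e_{1}\cdots e_{\ell-2}} = \frac{V_{\ell}}{m_{\ell}\,e_{1}\cdots e_{\ell-1}},
\end{align*}
which is the identity used in the derivation of \eqref{eq:strict_val_inc}, the strict inequality follows at once from $\mu>0$.

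For part (2), I would apply the fourth branch of Proposition \ref{prop:explicit_valuations}. For any $i>\ell$ with $i\le r_{\q}+1$,
\begin{align*}
	\wt[\p]{\phi_{i,\q}} &= \frac{m_{i,\q}}{m_{\ell}}\cdot\frac{V_{\ell}+\minclam{\p}{\q}}{e_{1}\cdots e_{\ell-1}},
\end{align*}
so dividing by $\deg\phi_{i,\q}=m_{i,\q}$ gives
\begin{align*}
	\wdt[\p]{\phi_{i,\q}} &= \frac{V_{\ell}+\minclam{\p}{\q}}{m_{\ell}\,e_{1}\cdots e_{\ell-1}},
\end{align*}
which is independent of $i$; equality between consecutive indices follows.

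The only real obstacle is making sure that for $i+1=\ell$ both possible formulas for $\wt[\p]{\phi_{\ell,\q}}$ give the required strict inequality, i.e.\ that the ``correction term'' in the numerator is strictly positive. This is handled uniformly by noting that $\clam{\p}{\q},\clam{\q}{\p}$ are positive rationals (hidden slopes), just like ordinary slopes $\lambda_{i}$, so both $\clam{\p}{\q}$ and $\minclam{\p}{\q}$ are positive.
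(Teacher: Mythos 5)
Your proposal is correct and takes exactly the approach the paper intends: the paper supplies no explicit proof, stating only that the lemma ``follows closely from Proposition \ref{prop:explicit_valuations}.'' Your case split at $i+1=\ell$, together with the telescoping identity $\dfrac{V_{\ell-1}+\lambda_{\ell-1}}{m_{\ell-1}\,\eto{\ell-2}} = \dfrac{V_{\ell}}{m_{\ell}\,\eto{\ell-1}}$ (the same one underlying \eqref{eq:strict_val_inc}) and the observation that the hidden slopes are strictly positive, supplies precisely the small amount of bookkeeping the paper leaves to the reader.
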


%
%

It is easy to find examples where
\begin{align}
	\label{eq:badq}
	\wdt[\p]{\phi_{\ell,\q}} > \wdt[\p]{\phi_{\ell+1,\q}}.
\end{align}

This pathology occurs when $\phi_{\ell,\q} = \phi(\p, \q)$ and $\clam{\p}{\q}$ is much larger than $\clam{\q}{\p}$ (see Proposition \ref{prop:explicit_valuations}). Hence, it is also easy to find specific conditions that avoid \eqref{eq:badq}.

\begin{lemma}
	\label{lem:safeq}
	Let $\p, \q \in \pp$ be two different prime ideals with $\ell = i(\p, \q) > 0$, chosen so that $\clam{\q}{\p} \geq \clam{\p}{\q}$. Then,
	\begin{align}
		\label{eq:safeq}
		\wdt[\p]{\phi_{\ell,\q}} &= \wdt[\p]{\phi_{\ell+1,\q}}.
	\end{align}
	In particular, every numerator $g_{i,\q}$ of degree $i$ of the Okutsu $\q$-basis has maximal $\p$-valuation amongst all polynomials $\phi \in \Phi(\q)$ of degree $i$.
\end{lemma}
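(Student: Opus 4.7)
The plan is to handle the two assertions in order: the equality \eqref{eq:safeq} should follow by direct substitution into Proposition \ref{prop:explicit_valuations}, and the maximality claim will then reduce to the resulting monotonicity of $\wdt[\p]{\phi_{j,\q}}$ combined with a greedy argument based on the divisibility chain of the $m_j$'s.

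For the first assertion, note that the hypothesis $\clam{\q}{\p} \geq \clam{\p}{\q}$ forces $\minclam{\p}{\q} = \clam{\p}{\q}$. Since $\ell+1 > \ell$, Proposition \ref{prop:explicit_valuations} gives
\[
\wt[\p]{\phi_{\ell+1,\q}} = \frac{m_{\ell+1,\q}}{m_\ell} \cdot \frac{V_\ell + \clam{\p}{\q}}{\eto{\ell-1}},
\]
so dividing by $\deg \phi_{\ell+1,\q} = m_{\ell+1,\q}$ yields $\wdt[\p]{\phi_{\ell+1,\q}} = (V_\ell + \clam{\p}{\q})/(m_\ell \eto{\ell-1})$. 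At $i = \ell$, both subcases of Proposition \ref{prop:explicit_valuations} produce the same value after dividing by $\deg \phi_{\ell,\q} = m_{\ell,\q} = m_\ell$: the $\phi(\p,\q)$ subcase does so directly, while the ``otherwise'' subcase contributes the trivial factor $m_{\ell,\q}/m_\ell = 1$, using that the integers $m_j$ for $\type_\p$ and $\type_\q$ coincide through level $\ell$. This establishes \eqref{eq:safeq}.

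For the maximality, set $\lambda := (V_\ell + \clam{\p}{\q})/(m_\ell \eto{\ell-1})$. Lemma \ref{lem:mostly_increasing_cross_values}(1), the equality \eqref{eq:safeq}, and Lemma \ref{lem:mostly_increasing_cross_values}(2) together show that $j \mapsto \wdt[\p]{\phi_{j,\q}}$ is strictly increasing for $1 \leq j \leq \ell$ and constantly equal to $\lambda$ for $\ell \leq j \leq r_\q$. The discussion in Section \ref{sec:formal_extension_okutsu_p_basis} (based on Lemma \ref{lem:other_type} and Proposition \ref{prop:explicit_valuations}) further gives $\wdt[\p]{\alpha} = \lambda$ for every Okutsu approximation $\alpha \in \App(\q)$. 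Thus every generator of the semigroup $\Phi(\q)$ has degree-adjusted $\p$-valuation at most $\lambda$.

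Fix $0 \leq i < n_\q$ and write an arbitrary $\phi \in \Phi(\q)$ of degree $i$ as $\phi = \prod_j \phi_{j,\q}^{b_j} \prod_k \alpha_k$. Then $\wt[\p]{\phi} = \sum_j b_j m_j \wdt[\p]{\phi_{j,\q}} + \sum_k n_\q \wdt[\p]{\alpha_k}$, with weights $b_j m_j$ and $n_\q$ summing to $i$. The numerator $g_{i,\q}$ comes from the greedy base expansion $i = a_0 + \sum_{j \geq 1} a_j m_j$ with $a_j < m_{j+1}/m_j$; thanks to the divisibility chain $m_1 \mid m_2 \mid \cdots \mid m_{r_\q+1} = n_\q$ from \eqref{eq:ms} together with the monotonicity of $j \mapsto \wdt[\p]{\phi_{j,\q}}$, any alternative decomposition of $i$ can be reached from the base expansion by iterated trades of one copy of $\phi_{j+1,\q}$ for $e_j f_j$ copies of $\phi_{j,\q}$, and each such trade weakly lowers the $\p$-valuation. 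The principal obstacle is making this greedy step fully watertight: one has to verify that every non-negative integer decomposition of $i$ in terms of the $m_j$'s (possibly augmented with multiples of $n_\q$ from Okutsu approximations) can indeed be connected to the unique base expansion by elementary trades of this form, and that each trade does not strictly increase the $\p$-valuation. Once this standard combinatorial fact is in place, $\wt[\p]{g_{i,\q}} \geq \wt[\p]{\phi}$ follows, completing the proof.
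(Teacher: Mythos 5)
Your argument is correct and follows the paper's proof exactly: reduce \eqref{eq:safeq} to the identity $\minclam{\p}{\q} = \clam{\p}{\q}$ together with Proposition \ref{prop:explicit_valuations}, then combine it with Lemma \ref{lem:mostly_increasing_cross_values} to conclude that $j \mapsto \wdt[\p]{\phi_{j,\q}}$ is non-decreasing, so the canonical mixed-radix expansion of $i$ maximizes the degree-weighted sum of these values. The ``obstacle'' you flag is the same step the paper compresses into ``we will always have a maximal valuation by taking higher degree $\phi$-polynomials, rather than products of smaller degree ones,'' and your downward-trade argument (replace one $\phi_{j+1,\q}$ by $\phi_{j,\q}^{e_{j}f_{j}}$, weakly decreasing the $\p$-valuation at each step, with termination forced by $e_{j}f_{j}>1$ and the bound $i<n_{\q}$) is precisely the routine justification of that sentence.
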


\begin{proof}
	By the hypothesis, $\minclam{\p}{\q} = \clam{\p}{\q}$ and Proposition \ref{prop:explicit_valuations} gives \eqref{eq:safeq}. Therefore, Lemma \ref{lem:mostly_increasing_cross_values} and \eqref{eq:safeq} show that the $\p$-valuations of the polynomials $\phi_{i,\q}$ increase with their degree up to index $\ell$ and then remain equal. As such, we will always have a maximal valuation by taking higher degree $\phi$-polynomials, rather than products of smaller degree ones.
\end{proof}

Using the extended index of coincidence presented in Definition \ref{def:eioc_types}, the following Lemma gives us a link between the relative similarity of prime ideals and their respective cross-valuations in certain cases.

\begin{lemma}
	\label{lem:greater_cross_value}
	For a prime ideal $\q \in \pp$, let $\p, \l \in \pp \setminus \set{\q}$ be two prime ideals such that either $\l = \p$ or they satisfy:
	\begin{enumerate}
		\item $I(\l, \q) \ge I(\p, \q)$, and
		\item $\clam{\l}{\q} \ge \clam{\p}{\q}$ if $I(\l, \q) = I(\p, \q)$.
	\end{enumerate}
	Then, for $\ell = i(\l, \q)$ and $\ml = m_{\ell,\q} = m_{\ell,\l}$, we have
	\begin{align*}
		\wt[\p]{\phi_{\ell,\l}^{m_{i,\q}/\ml}} &\ge \wt[\p]{\phi_{i,\q}}, \qquad \ell \le i \le r_{\q} + 1.
	\end{align*}
\end{lemma}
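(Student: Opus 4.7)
The plan is to expand both sides of the inequality using Proposition \ref{prop:explicit_valuations} and then carry out a case analysis driven by the relationship between $\ell=i(\l,\q)$ and $\ell':=i(\p,\q)$. Since
\begin{align*}
\wt[\p]{\phi_{\ell,\l}^{m_{i,\q}/\ml}} = \frac{m_{i,\q}}{\ml}\,\wt[\p]{\phi_{\ell,\l}},
\end{align*}
and since by hypothesis (1) we have $\ell\geq\ell'$, the right-hand side $\wt[\p]{\phi_{i,\q}}$ with $i\geq\ell$ will always carry a factor $m_{i,\q}/m_{\ell'}$ coming from the third/fourth branch of Proposition \ref{prop:explicit_valuations}. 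In every case, the target will reduce to an inequality between a slope $\lambda_{\ell,\p}$ and a hidden slope, or between two hidden-slope combinations.

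First, I would dispatch the case $\l=\p$. Here Proposition \ref{prop:explicit_valuations}(1) gives $\wt[\p]{\phi_{\ell,\p}}=(V_\ell+\lambda_{\ell,\p})/\eto{\ell-1}$, while the right-hand side is either $(V_\ell+\clam{\p}{\q})/\eto{\ell-1}$ or $(m_{i,\q}/\ml)(V_\ell+\minclam{\p}{\q})/\eto{\ell-1}$. Applying Remark \ref{rmk:nonop_valuations}(1), namely $\lambda_{\ell,\p}\geq\clam{\p}{\q}\geq\minclam{\p}{\q}$, and multiplying by the factor $m_{i,\q}/\ml\geq 1$ closes the estimate.

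Now assume $\l\neq\p$. If $\ell>\ell'$, then $\l$ and $\q$ share all refinement data at levels below $\ell$, so from $\p$'s viewpoint the types $\type_\l$ and $\type_\q$ are indistinguishable at level $\ell'$; consequently $i(\p,\l)=\ell'$, with $\phi(\p,\l)=\phi(\p,\q)$, $\clam{\p}{\l}=\clam{\p}{\q}$, and $\clam{\l}{\p}=\clam{\q}{\p}$. Applying the ``otherwise'' branch of Proposition \ref{prop:explicit_valuations} to both $\wt[\p]{\phi_{\ell,\l}}$ and $\wt[\p]{\phi_{i,\q}}$ shows that, after multiplication by $m_{i,\q}/\ml$, the two sides coincide. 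If instead $\ell=\ell'$, I split further on $i(\p,\l)$: when $i(\p,\l)>\ell$ one has $\phi_{\ell,\l}=\phi_{\ell,\p}$, which reduces to the case $\l=\p$ already handled; when $i(\p,\l)=\ell$, both $\p$ and $\l$ diverge from $\q$ at level $\ell$, and hypothesis (2) combined with a careful comparison of refinement lists is needed.

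The main obstacle is this last subcase, $\ell=\ell'$ with $i(\p,\l)=\ell$. Here I would trace the refinement lists at level $\ell$: hypothesis (1) gives $\hat\imath(\l,\q)\geq\hat\imath(\p,\q)$, so $\p$ and $\l$ agree with $\q$ throughout indices $1,\dots,\hat\imath(\p,\q)-1$, forcing the greatest common $\phi$-polynomial $\phi(\p,\l)$ to include the prefix defining $\phi(\p,\q)$. This yields $\clam{\p}{\l}\geq\clam{\p}{\q}$, while hypothesis (2) gives $\clam{\l}{\q}\geq\clam{\p}{\q}$, which together with the refinement-path structure entails $\clam{\l}{\p}\geq\clam{\q}{\p}$. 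Hence $\minclam{\p}{\l}\geq\minclam{\p}{\q}$, and a final appeal to Proposition \ref{prop:explicit_valuations} gives the required inequality after the $m_{i,\q}/\ml$ scaling.
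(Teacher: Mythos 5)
Your decomposition into subcases matches the paper's (the paper distinguishes $i(\l,\q)>i(\p,\q)$; $i(\l,\q)=i(\p,\q)$ with $\l=\p$ or $i(\p,\l)>\ell$; and two subcases with $i(\l,\q)=i(\p,\q)=i(\p,\l)=\ell$, according to whether $I(\l,\q)>I(\p,\q)$ or $I(\l,\q)=I(\p,\q)$), and your handling of the first three is sound and essentially that of the paper. The difficulty is in your last subcase, and it is real.

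Two things need repair there. First, the implication ``$\clam{\l}{\q}\ge\clam{\p}{\q}$ together with the refinement-path structure entails $\clam{\l}{\p}\ge\clam{\q}{\p}$'' is not justified: when $I(\l,\q)=I(\p,\q)$ one has $\clam{\l}{\p}=\clam{\l}{\q}$ and $\clam{\q}{\p}=\clam{\q}{\l}$, so your claim amounts to $\clam{\l}{\q}\ge\clam{\q}{\l}$, which is not among the hypotheses and can fail (e.g.\ $\clam{\p}{\q}=2$, $\clam{\l}{\q}=3$, $\clam{\q}{\p}=\clam{\q}{\l}=5$ satisfies hypothesis (2) but not your intermediate claim). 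Second, and more importantly, the target $\minclam{\p}{\l}\ge\minclam{\p}{\q}$ is too weak when $i=\ell$ and $\phi_{\ell,\q}=\phi(\p,\q)$: there Proposition \ref{prop:explicit_valuations} gives $\wt[\p]{\phi_{\ell,\q}}=(V_\ell+\clam{\p}{\q})/\eto{\ell-1}$, controlled by the possibly larger quantity $\clam{\p}{\q}\ge\minclam{\p}{\q}$, which is exactly the pathology the paper flags around \eqref{eq:badq}. The fix, used in the paper, is to establish the \emph{equality} $\clam{\p}{\l}=\clam{\p}{\q}$ (not merely $\ge$; since $\phi(\p,\l)=\phi(\p,\q)$ the truncation depth of the refinement list, hence the sum of slopes, is the same), and then to apply hypothesis (2) in the form $\clam{\l}{\p}=\clam{\l}{\q}\ge\clam{\p}{\q}=\clam{\p}{\l}$, yielding $\minclam{\p}{\l}=\clam{\p}{\l}=\clam{\p}{\q}$. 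This identity dominates both the generic branch (via $\clam{\p}{\q}\ge\minclam{\p}{\q}$) and the special $i=\ell$, $\phi_{\ell,\q}=\phi(\p,\q)$ branch of the valuation formula, which your $\minclam{\p}{\q}$-based estimate does not.
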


\begin{proof}
	We may consider four cases depending on the relationship between the three prime ideals $\p$, $\q$, and $\l$.

	\begin{proofcase}
		$i(\l,\q) > i(\p,\q)$. In this case, $k := i(\p, \l) = i(\p, \q) < \ell$ and $\clam{\q}{\p} = \clam{\l}{\p}$, $\clam{\p}{\q} = \clam{\p}{\l}$. Therefore, for $\ell \le i \le r_{\q} + 1$, Proposition \ref{prop:explicit_valuations} shows that
		\begin{align}
			\label{eq:closer_crossval_easy_case}
			\begin{aligned}
			\wt[\p]{\phi_{\ell,\l}^{m_{i,\q}/\ml}} &= \frac{m_{i,\q}}{\ml} \frac{\ml}{m_{k}} \frac{V_{k} + \minclam{\p}{\l}}{\eto{k-1}} \\
				&= \frac{m_{i,\q}}{m_{k}} \frac{V_{k} + \minclam{\p}{\q}}{\eto{k-1}} = \wt[\p]{\phi_{i,\q}}.
			\end{aligned}
		\end{align}
	\end{proofcase}

	\begin{proofcase}
		$i(\l,\q) = i(\p,\q)$ and either $\l = \p$ or $i(\p,\l) > \ell$. We have
		\begin{align*}
			\wt[\p]{\phi_{\ell,\l}^{m_{i,\q}/\ml}} = \frac{m_{i,\q}}{\ml} \frac{V_{\ell} + \lambda_{\ell,\p}}{\eto{\ell-1}}
		\end{align*}
		by Proposition \ref{prop:explicit_valuations}. On the other hand,
		\begin{align}
			\label{eq:closer_crossval_q_indco}
			\wt[\p]{\phi_{i,\q}} =
			\begin{cases}
				\dfrac{V_{\ell} + \clam{\p}{\q}}{\eto{\ell-1}},		&\text{if } i = \ell \text{ and } \phi_{\ell,\q} = \phi(\p,\q), \medskip \\
				\dfrac{m_{i,\q}}{\ml} \dfrac{V_{\ell} + \minclam{\p}{\q}}{\eto{\ell-1}},	&\text{otherwise}.
			\end{cases}
		\end{align}
		By the first item of Remark \ref{rmk:nonop_valuations}, we have $\clam{\p}{\q} \le \lambda_{\ell,\p}$, so that $\wt[\p]{\phi_{i,\q}} \le \wt[\p]{\phi_{\ell,\l}^{m_{i,\q}/\ml}}$ in both cases.
	\end{proofcase}
		
	\begin{proofcase}
		$i(\l,\q) = i(\p,\q) = i(\p,\l)$, and $I(\l,\q) > I(\p,\q)$. In the non-optimised tree, we find the situation described in Figure \ref{fig:tree_nop_extind} (a), where we have written the optimised hidden slopes instead of the non-optimised ones.
		
		We necessarily have $\phi_{\ell,\q} \ne \phi(\p,\q) = \phi(\p,\l) \ne \phi_{\ell,\l}$, and $\clam{\p}{\l} = \clam{\p}{\q}$, $\clam{\q}{\p} = \clam{\l}{\p}$. Therefore, the equations of \eqref{eq:closer_crossval_easy_case} (where $k=\ell$ now) are again a consequence of Proposition \ref{prop:explicit_valuations}, for all $\ell \le i \le r_{\q} + 1$.
	\end{proofcase}

	\begin{figure}[htb]
		\centering
		\begin{subfigure}{0.45\textwidth}
			\centering
			\begin{tikzpicture}[scale=0.9]
		
				\draw[anchor=east] (-0.15, 0) node {$\cdots$};
				\filldraw (0, 0) circle (2.5pt);
				
				\filldraw (2, 1) circle (2.5pt);
				\filldraw (4, 1) circle (2.5pt);
				\filldraw (6, 2) circle (2.5pt);
				\filldraw (6, 0) circle (2.5pt);
			
				\filldraw (2, -2) circle (2.5pt);
			
				\draw (0, 0) -- (2, 1) -- (4, 1);
				\draw (4, 1) -- (6, 2);
				\draw (4, 1) -- (6, 0);
				\draw (0, 0) -- (2, -2);
				
				\draw[anchor=east] (7.65, 1.990) node {$\cdots\ \type_{\l}$};
				\draw[anchor=east] (7.5, -0.030) node {$\cdots\ \type_{\q}$};
				\draw[anchor=east] (7.5, -2.030) node {$\cdots \cdots \cdots \cdots \cdots \cdots \cdots \cdots\ \type_{\p}$};
		
				\draw[anchor=south east] (1.3, 0.50) node {$\lambda_{\l}^{\p} = \lambda_{\q}^{\p}$};
				\draw[anchor=north east] (1.0, -0.75) node {$\lambda_{\p}^{\l} = \lambda_{\p}^{\q}$};
		
				\draw[anchor=south east] (4.9, 1.40) node {$\lambda_{\l}^{\q}$};
				\draw[anchor=north east] (4.9, 0.60) node {$\lambda_{\q}^{\l}$};
		
			\end{tikzpicture}
			\caption{(a) $I(\l, \q) > I(\l, \p)$.}
			\label{fig:tree_nop_extind_gt}
		\end{subfigure}
		~~~
		\begin{subfigure}{0.45\textwidth}
			\centering
			\begin{tikzpicture}[scale=0.9]
		
				\draw[anchor=east] (-0.15, 0) node {$\cdots$};
				\filldraw (0, 0) circle (2.5pt);
				
				\filldraw (2, 2) circle (2.5pt);
				\filldraw (2, 0) circle (2.5pt);
				\filldraw (2, -2) circle (2.5pt);
			
				\draw (0, 0) -- (2, 2);
				\draw (0, 0) -- (2, 0);
				\draw (0, 0) -- (2, -2);
				
				\draw[anchor=west] (2.2, 1.990) node {$\cdots\ \type_{\l}$};
				\draw[anchor=west] (2.2, -0.030) node {$\cdots\ \type_{\q}$};
				\draw[anchor=west] (2.2, -2.030) node {$\cdots\ \type_{\p}$};
		
				\draw[anchor=south east] (0.9, 0.75) node {$\lambda_{\l}^{\q} = \lambda_{\l}^{\p}$};
				\draw[anchor=south east] (2.3, 0.0) node {$\lambda_{\q}^{\l} = \lambda_{\q}^{\p}$};
				\draw[anchor=north east] (0.9, -0.75) node {$\lambda_{\p}^{\l} = \lambda_{\p}^{\q}$};		
			\end{tikzpicture}
			\caption{(b) $I(\l, \q) = I(\l, \p)$.}
			\label{fig:tree_nop_extind_eq}
		\end{subfigure}
		\caption{Relative positions of $\type_{\l}$, $\type_{\q}$, and $\type_{\p}$ in the non-optimised tree when $i(\l,\q) = i(\p,\q) = i(\p,\l)$.}
		\label{fig:tree_nop_extind}
	\end{figure}
	
	\begin{proofcase}
		$\p \ne \l$, $i(\l,\q) = i(\p,\q) = i(\p, \l)$, and $I(\l,\q) = I(\p,\q)$. In the non-optimised tree, we find the situation described in Figure \ref{fig:tree_nop_extind} (b). We have $\phi(\p,\q) = \phi(\l,\q) = \phi(\p,\l)$. By our assumptions, $\clam{\p}{\l} \le \clam{\l}{\p}$ and Proposition \ref{prop:explicit_valuations} shows that:
		\begin{align*}
			\wt[\p]{\phi_{\ell,\l}^{m_{i,\q}/\ml}} = \frac{m_{i,\q}}{\ml} \frac{V_{\ell} + \clam{\p}{\l}}{\eto{\ell-1}},
		\end{align*}
		whereas $\wt[\p]{\phi_{i,\q}}$ is given by \eqref{eq:closer_crossval_q_indco}. Since $\clam{\p}{\q} = \clam{\p}{\l}$, we get $\wt[\p]{\phi_{i,\q}} \le \wt[\p]{\phi_{i,\q}^{m_{i,\q}/\ml}}$ as desired.
		\qedhere
	\end{proofcase}
\end{proof}

\begin{definition}
	\label{def:disorder}
	Let $g = \prod_{\p} \varphi_{\p} \in \Phi(S)$. The \emph{disorder} of $g$ is calculated as $\Disorder(g) = \sum_{\p \in S} \max\set{\deg(\varphi_{\p}) - n_{\p}, 0}$.
\end{definition}

\begin{definition}
	\label{def:canonical}
	A polynomial $\varphi_{\p} = \phi_{0,\p}^{a_{0}} \phi_{1,\p}^{a_{1}} \cdots \phi_{r_{\p},\p}^{a_{r_{\p}}} \phi_{\p}^{a_{r_{\p}+1}} \in \Phi(\p)$ is said to be \emph{canonical} if $0 \le a_{i} < e_{i,\p} f_{i,\p}$ for all $0 \le i \le r_{\p}$. 

	The canonical polynomials $\varphi_{\p} \in \Phi(\p)$ of degree $\deg(\varphi_{\p}) \le n_{\p}$ coincide with the numerators of the Okutsu $\p$-basis, and hence belong to $\Ok(\p)$ too.
\end{definition}

\begin{remark}
	For a given set of prime ideals $S \subseteq \pp$, all of them with the same root node, it is always possible to choose a prime ideal $\p_{0} \in S$ such that $\clam{\p_{0}}{\p} \le \clam{\p}{\p_{0}}$ for all $\p \in S$.
	
	To do so, begin at the root of the tree of types representing $S$ and move up through the levels. When branching is encountered, take the branch that corresponds to the slope of least absolute value at that level. We continue in this way until we reach a leaf node, which will correspond to a prime $\p_{0}$ with the desired properties.
\end{remark}

\begin{proposition}
	\label{prop:okutsu_basis_comb_optimal}
	Let $S \subseteq \pp$ be a set of prime ideals and consider $\phi \in \Phi(S)$ monic of degree $0 \leq d \leq n_{S}$. For appropriate choices of the Okutsu approximations $\phi_{\p}$, the set $\Ok(S)$ contains a polynomial $g$ of degree $d$ such that,
	\begin{align*}
		\wt[\p]{g} &\geq \wt[\p]{\phi},			& \forall\ \p \in S.
	\end{align*}
\end{proposition}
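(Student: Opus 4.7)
The plan is to argue by induction on $\Disorder(\phi)$, where we first decompose $\phi=\prod_{\p\in S}\varphi_\p$ with $\varphi_\p\in\Phi(\p)$; each factor can be written as $\varphi_\p=\phi_{1,\p}^{a_1}\cdots\phi_{r_\p,\p}^{a_{r_\p}}\phi_\p^{a_{r_\p+1}}$ times a possibly missing extra $\phi\in\App(\p)$. The target is to rewrite this product, step by step, as an element of $\Ok(S)$ without decreasing any of the valuations $\wt[\q]{\cdot}$ for $\q\in S$, where at each step we are free to re-choose the Okutsu approximations $\phi_\p$ as the statement permits.

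\textbf{Base case: $\Disorder(\phi)=0$.} Here $\deg(\varphi_\p)\le n_\p$ for every $\p$, so each $\varphi_\p$ lies in $\Phi(\p)$ with bounded degree and can be rewritten in canonical form (Definition~\ref{def:canonical}) by the usual carry operations $\phi_{i,\p}^{e_{i,\p}f_{i,\p}}\rightsquigarrow\phi_{i+1,\p}$. These operations preserve the degree of $\varphi_\p$, and the inequality \eqref{eq:strict_val_inc} shows that the $\p$-valuation does not decrease; the cross-valuations $\wt[\q]{\varphi_\p}$ for $\q\neq\p$ are controlled by Lemma~\ref{lem:mostly_increasing_cross_values} (non-decreasing below the index of coincidence, invariant above it) and Lemma~\ref{lem:safeq} (after choosing $\phi_{\p}$ so that $\clam{\q}{\p}\geq\clam{\p}{\q}$, which is always arrangeable by the Remark preceding the proposition). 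Since a canonical polynomial of degree $\le n_\p$ is exactly an extended Okutsu numerator, the product lies in $\Ok(S)$.

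\textbf{Inductive step: $\Disorder(\phi)>0$.} Fix a $\p\in S$ with $\deg(\varphi_\p)>n_\p$; as $\deg\phi\le n_S$, some $\p_0\in S$ satisfies $\deg(\varphi_{\p_0})<n_{\p_0}$. Restrict to the smallest subtree of $\tree$ containing $\p$ and $\p_0$, and invoke the Remark to replace $\p_0$ inside this subtree by a prime for which $\clam{\p_0}{\q}\le\clam{\q}{\p_0}$ for every other prime $\q$ in the subtree. Let $\ell=i(\p_0,\p)$ and $\ml=m_{\ell,\p_0}=m_{\ell,\p}$. Since $\deg(\varphi_\p)>n_\p$, the factorisation of $\varphi_\p$ contains some $\phi_{i,\p}$-factor (or a factor in $\App(\p)$) of degree $m_{i,\p}>\ml$ that we may single out; replace one such factor by $\phi_{\ell,\p_0}^{m_{i,\p}/\ml}$, which has the same degree and which, by Lemma~\ref{lem:greater_cross_value}, satisfies $\wt[\q]{\phi_{\ell,\p_0}^{m_{i,\p}/\ml}}\ge\wt[\q]{\phi_{i,\p}}$ for every $\q\in S$. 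The overall degree of $\phi$ is unchanged, no valuation decreases, and by picking the smallest such $i$ we ensure that $\Disorder$ strictly decreases (the surplus at $\p$ drops by $m_{i,\p}$ while the contribution to $\p_0$ stays $\le n_{\p_0}$). The induction hypothesis now produces the required $g\in\Ok(S)$.

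\textbf{Main obstacle.} The hardest point is the bookkeeping in the inductive step: one must simultaneously preserve \emph{all} cross-valuations while genuinely reducing the disorder. The asymmetry of hidden slopes $\clam{\p}{\q}$ forces the specific choice of $\p_0$ coming from the Remark, and the four-case analysis in Lemma~\ref{lem:greater_cross_value} is exactly what makes the transfer safe. A secondary subtlety is ensuring that the $\phi$-factor chosen to be transferred has large enough degree ($m_{i,\p}\ge\ml$) so that the replacement $\phi_{\ell,\p_0}^{m_{i,\p}/\ml}$ is well-defined, which requires mildly arguing that if $\Disorder(\phi)>0$ then $\varphi_\p$ necessarily contains such a high-level factor.
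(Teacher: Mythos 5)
Your induction on $\Disorder(\phi)$ is a plausible organizing principle, but both the base case and the inductive step have genuine gaps that the paper's proof is specifically designed to avoid.

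\textbf{Base case.} You claim the carry $\phi_{i,\q}^{e_{i,\q}f_{i,\q}}\rightsquigarrow\phi_{i+1,\q}$ cannot decrease cross-valuations, citing Lemma~\ref{lem:mostly_increasing_cross_values} and Lemma~\ref{lem:safeq}, and say the hypothesis $\clam{\q}{\p}\geq\clam{\p}{\q}$ of Lemma~\ref{lem:safeq} ``is always arrangeable by the Remark.'' This is incorrect on two counts. First, the hidden slopes $\clam{\p}{\q}$, $\clam{\q}{\p}$ are intrinsic data of the pair of types $\type_\p$, $\type_\q$ (Definition~\ref{def:cphi_hiddenslope}); they are \emph{not} affected by which Okutsu approximation $\phi_\p$ one selects. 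Second, the Remark only guarantees the existence of \emph{one} distinguished prime $\p_0$ with $\clam{\p_0}{\p}\le\clam{\p}{\p_0}$ for all $\p$ — it does not make the inequality hold for every pair. Consequently, when canonicalising $\varphi_\q$, the pathological inequality \eqref{eq:badq} can genuinely occur for some observer $\pz$, and a simple carry at level $\ell=i(\pz,\q)$ would strictly decrease $\w[\pz]{\cdot}$. The paper's Step~1 copes with this by processing the primes in a carefully chosen order \emph{and}, in its Case~3, by moving the offending surplus $\phi_{\ell,\q}^{a_\ell - e_\ell f_\ell + 1}$ into a neighbouring $\p$-part instead of carrying — a transfer that your base case does not allow for, even though it is already needed at $\Disorder=0$.

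\textbf{Inductive step.} You replace a single factor $\phi_{i,\p}$ of $\varphi_\p$ by $\phi_{\ell,\p_0}^{m_{i,\p}/\ml}$ and cite Lemma~\ref{lem:greater_cross_value} for safety. But that lemma explicitly takes an observer $\p\in\pp\setminus\set{\q}$, i.e. the observer must be \emph{different} from the transferring prime; it says nothing about whether $\w[\p]{\cdot}$ itself survives when a factor of $\varphi_\p$ is removed. In fact $\w[\p]{\phi_{i,\p}}$ is a self-valuation which grows strictly with $i$ by \eqref{eq:strict_val_inc}, so for $i>\ell$ the replacement $\phi_{i,\p}\rightsquigarrow\phi_{\ell,\p_0}^{m_{i,\p}/\ml}$ \emph{does} decrease $\w[\p]{\cdot}$, and your argument has no mechanism to recover. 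The paper's Step~3 avoids this by first shrinking $\varphi_\q$ to exactly a single Okutsu approximation $\phi_\q$ and transferring \emph{all} other factors at once; the lost $\q$-valuation is then recaptured by choosing $\phi_\q$ with $\wt[\q]{\phi_\q}$ as large as necessary — this is precisely where the ``appropriate choices of $\phi_\p$'' freedom in the statement is spent. A further issue: Lemma~\ref{lem:greater_cross_value} requires the destination $\l$ to be the prime \emph{closest} to the source $\q$ (maximal $I(\q,\l)$ and, on ties, maximal $\clam{\l}{\q}$). Your proposal picks $\p_0$ because it has spare degree, and the ``Remark'' selection you invoke gives a prime with globally minimal hidden slopes — neither of these is the closeness condition the lemma needs, so conditions (1)--(2) of the lemma are not verified for an arbitrary observer $\q'\in S$.
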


\begin{proof}
	Consider $\phi = \prod_{\p \in S} \varphi_{\p}$ the separation of the polynomial $\phi$ into its $\p$-parts $\varphi_{\p}$ for each $\p \in S$. Then, let $S_{0} = \set{ \p \in S : \varphi_{\p} \ne \phi_{\p} }$.
	
	We will follow an iterative sequence of three steps to find a polynomial $g \in \Ok(S)$ that meets the requirements of the proposition. Throughout this process we will be modifying $g$, which is initially set to $\phi$, via its individual $\p$-parts:
	\begin{enumerate}
		\item For all $\p \in S_{0}$, make $\varphi_{\p}$ canonical.
		\item If $D(g) = 0$, then for all $\p \in S \setminus S_{0}$ we take $\varphi_{\p} = \phi_{\p}$ to be an Okutsu approximation to $F_{\p}$ with $\wt[\p]{\phi_{\p}} \ge \wt[\p]{\phi}$, and finish the iterative process.
		\item Fix some $\q \in S_{0}$ with $\deg(\varphi_{\q}) > n_{\q}$ and then select $\l \in S_{0}$, the ``closest'' prime ideal to $\q$. Transfer all $\phi$-polynomials in $\varphi_{\q}$, except for a single $\phi_{\q}$, to $\varphi_{\l}$. Remove $\q$ from $S_{0}$.
	\end{enumerate}
	
	Below, we will show that at each step, the $\p$-valuation of $g$ does not decrease for all $\p \in S_{0}$ and that the process will terminate in a polynomial $g$ belonging to $\Ok(S)$ after a finite number of iterations. The inequality $\wt[\p]{g} \ge \wt[\p]{\phi}$ for the primes $\p \in S \setminus S_{0}$ are a consequence of the choices in Step (2).
	
	\begin{proofstep}
		We will make $\varphi_{\p}$ canonical for each $\p \in S_{0}$ in turn. Initially, we set $S' = S_{0}$. Take $\q \in S'$ such that $\clam{\q}{\p} \le \clam{\p}{\q}$ for all $\p \in S' \setminus \set{\q}$ and set $S' = S' \setminus \set{\q}$.
				
		Consider $\varphi_{\q} = \prod_{i = 0}^{r_{\q}+1} \phi_{i,\q}^{a_{i}}$. To make $\varphi_{\q}$ canonical, we wish to have $a_{i} < e_{i,\q} f_{i,\q}$ for all $0 \le i \le r_{\q}$. We will do this iteratively for $i = 0, 1, \dots, r_{\q}$.
		
		\begin{proofcase}
			$a_{i} < e_{i,\q} f_{i,\q}$. In this case, we do nothing.
		\end{proofcase}
	
		\begin{proofcase}
			$a_{i} \ge e_{i,\q} f_{i,\q}$ and $\wdt[\p]{\phi_{i,\q}} \le \wdt[\p]{\phi_{i+1,\q}}$ for all $\p \in S'$. We replace each $\phi_{i,\q}^{e_{i,\q}f_{i,\q}}$ with a single $\phi_{i+1,\q}$ in $\varphi_{\q}$.
			
			For all $\p \in S_{0}$, the $\p$-valuation of $\varphi_{\q}$ will not decrease. In fact, for $\p \in S'$ this is deduced from the condition of this case. For $\p \in S_{0} \setminus S'$, this is a consequence of Lemmas \ref{lem:mostly_increasing_cross_values} and \ref{lem:safeq}.
		\end{proofcase}
		
		\begin{proofcase}
			$a_{i} \ge e_{i,\q} f_{i,\q}$ and $\wdt[\pz]{\phi_{i,\q}} > \wdt[\pz]{\phi_{i+1,\q}}$ for some $\pz \in S'$. In this case, we cannot simply exchange $\phi_{i,\q}^{e_{i,\q}f_{i,\q}}$ for $\phi_{i+1,\q}$ without lowering the $\pz$-valuation of $\varphi_{\q}$.
			
			Instead, take $\l \in S'$ the prime ideal that is ``closest'' to $\q$; that is, $I(\q, \l) \ge I(\q, \p)$ for all $\p \in S'$, and in the case of equality $\clam{\l}{\q} \ge \clam{\p}{\q}$. By Lemma \ref{lem:mostly_increasing_cross_values} and the election of $\l$ we must have $i(\q, \l) \ge i = i(\pz,\q)$.
			
			We remove $\phi_{i,\q}^{a_{i} - e_{i,\q}f_{i,\q} + 1}$ from $\varphi_{\q}$ and insert $\phi_{i,\l}^{a_{i} - e_{i,\q}f_{i,\q} + 1}$ into $\varphi_{\l}$. If $i < \ell := i(\l,\q)$ then $\phi_{i,\q} = \phi_{i,\l}$ and $g$ has not changed (we have only redistributed its $\p$-parts). If $i = \ell$, we must check that $\wt[\p]{\phi_{\ell,\l}} \ge \wt[\p]{\phi_{\ell,\q}}$ for all $\p \in S_{0}$. For $\p \in S'$ this is a consequence of Lemma \ref{lem:greater_cross_value}.
					
			By the maximality of $I(\l,\q)$ amongst all prime ideals in $S'$, the relative situation of $\type_{\q}, \type_{\l}, \type_{\pz}$ in the non-optimised tree is as indicated in Figure \ref{fig:case3_indco}.
			
			\begin{figure}[htb]
				\centering
				\begin{subfigure}{0.47\textwidth}
					\centering
					\begin{tikzpicture}[scale=0.9]
						\draw[anchor=east] (-0.15, 0) node {$\cdots\  \type$};
						\filldraw (0, 0) circle (2.5pt);
						\filldraw (2, 0.8) circle (2.5pt);
						\filldraw (2, 0) circle (2.5pt);
						\filldraw (2, -0.8) circle (2.5pt);
						\draw (0, 0) -- (2, 0.8);
						\draw (0, 0) -- (2, 0);
						\draw (0, 0) -- (2, -0.8);
						\draw[anchor=west] (2.7, 0.770) node {$\cdots\ \type_{\p_{0}}$};
						\draw[anchor=west] (2.7, -0.010) node {$\cdots\ \type_{\l}$};
						\draw[anchor=west] (2.2, -0.830) node {$\type' \ \cdots\ \type_{\q}$};
					\end{tikzpicture}
					\label{fig:case3_indco_eq}
				\end{subfigure}
				~~~
				\begin{subfigure}{0.47\textwidth}
					\centering
					\begin{tikzpicture}[scale=0.9]
						\draw[anchor=east] (-0.15, 0) node {$\cdots\  \type$};
						\filldraw (0, 0) circle (2.5pt);
						\filldraw (2, 0.6) circle (2.5pt);
						\filldraw (2, -0.6) circle (2.5pt);
						\draw (0, 0) -- (2, 0.6);
						\draw (0, 0) -- (2, -0.6);
						\draw[anchor=west] (2.7, 0.570) node {$\cdots\ \type_{\p_{0}},\, \type_{\l}$};
						\draw[anchor=west] (2.2, -0.630) node {$\type' \ \cdots\ \type_{\q}$};
					\end{tikzpicture}
					\label{fig:case3_indco_gt}
				\end{subfigure}
				\caption{Relative position of $\type_{\q}, \type_{\l}, \type_{\pz}$ in the non-optimised tree.}
				\label{fig:case3_indco}
			\end{figure}
			
			Also, by the remarks following \eqref{eq:badq}, we have necessarily $\phi_{\ell,\q} = \phi(\pz,\q)$; hence the node $\type'$ in Figure \ref{fig:case3_indco} corresponds to a type in the optimised tree and $\lambda_{\ell,\q} = \clam{\q}{\pz} = \clam{\q}{\l}$. In particular, for any $\p \in S$ with $i(\p,\q) = \ell$ we cannot have $I(\p,\q) > I(\l,\q)$.
			
			Now consider $\p \in S_{0} \setminus S'$. If $\p \ne \q$ and $I(\p,\q) \le I(\l,\q)$, Lemma \ref{lem:greater_cross_value} is also applicable and yields $\wt[\p]{\phi_{\ell,\l}} \ge \wt[\p]{\phi_{\ell,\q}}$. If $I(\p,\q) > I(\l,\q)$, we have necessarily $i(\p,\q) > \ell$, as we have just remarked above. This clearly implies $i(\p,\l) = \ell$.
			
			In both cases, $\p = \q$ or $i(\p,\q) > i(\l,\q) = \ell$, Proposition \ref{prop:explicit_valuations} shows that:
			\begin{align*}
				\wt[\p]{\phi_{\ell,\q}} &= \frac{V_{\ell} + \lambda_{\ell,\p}}{\eto{\ell-1}} = \frac{V_{\ell} + \lambda_{\ell,\q}}{\eto{\ell-1}}, \\
				\wt[\p]{\phi_{\ell,\l}} &= \frac{V_{\ell} + \clam{\p}{\l}}{\eto{\ell-1}} = \frac{V_{\ell} + \clam{\q}{\l}}{\eto{\ell-1}}.
			\end{align*}
			In the last equality, we used $\clam{\q}{\l} = \minclam{\q}{\l}$, by the choice of $\q$. Since $\lambda_{\ell,\q} = \clam{\q}{\l}$, we get $\wt[\p]{\phi_{\ell,\q}} = \wt[\p]{\phi_{\ell,\l}}$ as desired.
		\end{proofcase}	
		
		We continue in this way until $\#S' = 0$.
	\end{proofstep}
	
	\begin{proofstep}
		Having completed Step 1, all $\p$-parts of $g$ are canonical, so if $\Disorder(g) = 0$, then $g \in \Ok(S)$, completing the process after considering adequate choices of the Okutsu approximations $\varphi_{\p} = \phi_{\p}$ for all $\p \in S \setminus S_{0}$.
	\end{proofstep}
	
	\begin{proofstep}
		If there exists $\q \in S_{0}$ with $\deg \varphi_{\q} > n_{\q}$, we choose $\l \in S_{0}$ such that $I(\q, \l) \ge I(\q, \p)$ for all $\p \in S_{0} \setminus \set{\q}$ and $\clam{\l}{\q} \ge \clam{\p}{\q}$ in the case of equality.
	
		Having chosen $\q$ and $\l$, the transfer occurs as follows for each $0 \le i \le r_{\q} + 1$:
		\begin{align*}
			\phi_{i,\q} \longrightarrow
			\begin{cases}
				\phi_{i,\l}	& \text{if } i < \ell = i(\q, \l), \\
				\phi_{\ell,\l}^{m_{i,\q}/m_{\ell}}	& \text{if } i \ge \ell = i(\q, \l).
			\end{cases}
		\end{align*}
		
		By Lemma \ref{lem:greater_cross_value}, the $\p$-valuation of the resulting $\varphi$ will not decrease, except possibly in the case where $\p = \q$. However, since $\varphi_{\q} = \phi_{\q}$, we may increase the $\q$-valuation of $\varphi_{\q}$ by choosing a better Okutsu approximation $\phi_{\q}$ to compensate any decrease in value.
		
		After this step we remove $\q$ from $S_{0}$, since $\varphi_{\q} = \phi_{\q}$ and then return to Step 1.
	\end{proofstep}
		
	As Step 3 reduces the number of prime ideals in $S_{0}$, this process will clearly end after at most $\#S$ iterations.
\end{proof}

\section{Proof of Theorem \ref{thm:maxmin_works}}
\label{sec:proof_maxmin}

In the interest of clarity, we will only prove Theorem \ref{thm:maxmin_works} in the case $I = \ooL$. The proof in the case of an arbitrary fractional ideal is almost identical. Further details on the required adaptations can be found in \cite[Ch. 6]{Stainsby:2014:thesis}.

\subsection{Precomputation}
\label{sec:precomputation}

Recall that our set $S$ of prime ideals has a total ordering $S = \set{\q_{1},\, \dots,\, \q_{s}}$ satisfying \eqref{eq:ordering}. Denote $n_{S} := \sum_{\p \in S} n_{\p}$, $w_{S} := w_{S,\ooL}$ and consider the following intervals of $S$:
\begin{align*}
	[a, b] &:= \set{ \q_{j} : a \le j \le b },	\qquad 1 \le a \le b \le s.
\end{align*}

\begin{definition}
	An \emph{order preserving} partition of $S$ is a decomposition $S = I_{1} \cup \cdots \cup I_{t}$ of $S$ into the disjoint union of intervals $I_{j} = [a_{j}, b_{j}]$ with increasing end points $b_{1} < \cdots < b_{t}$.
\end{definition}


Take extended families of numerators $g_{0,I_{j}}, \dots, g_{n_{j},I_{j}}$ of Okutsu $I_{j}$-bases, for all $1 \leq j\leq t$, where $n_{j} := n_{I_{j}}$. That is, each $g_{i,I_{j}}$ has degree $i$, belongs to $\Ok(I_{j})$ and $\w[I_{j}]{g_{i,I_{j}}}$ is maximal amongst all monic polynomials in $\oovx$ of degree $i$.

Consider multi-indices $\ii = (i_{1}, \dots, i_{t})$ of degree $\deg \ii = i_{1} +\cdots + i_{t}$ and monic polynomials $g_{\ii} := g_{i_{1},I_{1}} \cdots g_{i_{t},I_{t}} \in \oox$.

We may consider the version of $\mm$ presented in Algorithm \ref{alg:maxmin_s_precomputed}.

\begin{algorithm}
	\caption{$\mm[S = I_{1} \cup \cdots \cup I_{t}]$ algorithm}
	\label{alg:maxmin_s_precomputed}
	\begin{algorithmic}[1]
		\INPUT An order preserving partition $S = I_{1} \cup \cdots \cup I_{t}$ of $S \subseteq \pp$, and extended families $\set{g_{i,I_{j}} :  0 \leq i \leq n_{I_{j}}}$ of numerators of Okutsu $I_{j}$-bases for all $1 \leq j \leq t$.
						
		\OUTPUT A family $\ii_{0}, \ii_{1}, \dots, \ii_{n_{S}} \in \N^{t}$ of multi-indices of degree $0, 1, \dots, n_{S}$, respectively.
				
		\STATE $\ii_{0} \gets \ind{ 0, \dots, 0 } \in \N^{t}$
		
		\FOR {$k = 0 \to n_{S}-1$}
		
			\STATE $j \gets \Min\set{ 1 \leq i \leq t : \w[I_{i}]{g_{\ii_{k}}} = \w[S]{g_{\ii_{k}}} }$
			
			\STATE $\ii_{k+1} \gets \ii_{k} + \uu_{j}$
			
		\ENDFOR
	\end{algorithmic}
\end{algorithm}

Such a decomposition of $\mm$ will be useful for the proof of Theorem \ref{thm:maxmin_works}.

\begin{definition}
	\label{def:precomputation}
	For indices $1\le a\le b\le s$, we say that $I = [a, b]$ \emph{admits precomputation} if, after natural identifications, the algorithm $\mm[S]$ has the same output as
	\begin{align}
		\label{eq:maxmin_precomp_i}
		\mm\left[S=\set{\q_{1}} \cup \cdots \cup \set{\q_{a-1}} \cup I \cup \set{\q_{b+1}} \cup \cdots \cup \set{\q_{s}} \right],
	\end{align}
	where we consider the output of $\mm[I]$ as an extended Okutsu $I$-basis.

	By ``natural identifications'' we mean that if the $k$-th output of $\mm[S]$ is $\ii_{k} = \ind{ i_{\q_{1}}, \dots, i_{\q_{s}} }$, then the $k$-th output of the algoritm \eqref{eq:maxmin_precomp_i} is:
	\begin{align*}
		\jj_{k} = \ind{ i_{\q_{1}}, \dots, i_{\q_{a}-1}, i_I, i_{\q_{b}+1}, \dots, i_{\q_{s}} },
	\end{align*}
	while the $i_{I}$-th output of $\mm[I]$ is $\left(i_{\q_{a}}, \dots, i_{\q_{b}}\right)$.
\end{definition}

%

The next result is an immediate consequence of the definition.

\begin{lemma}
	\label{lem:precomputation2} 
	Let $S = I_{1} \cup \cdots \cup I_{t}$ be an order preserving partition of $S$. If all intervals $I_{j}$ admit precomputation, then $\mm[S=I_{1} \cup \cdots \cup I_{t}]$ has the same output as $\mm[S]$, after natural identifications.\qed
\end{lemma}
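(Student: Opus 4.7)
My plan is to show directly that $\mm[P]$, where $P := I_1 \cup \cdots \cup I_t$, and $\mm[S]$ produce the same underlying sequence of polynomials $g_{\ii_m}$ at each step $m$, with the two representations related by the natural identification. The proof proceeds by induction on the step number $m$. The base case $m=0$ is trivial since both algorithms start with $g_{\ii_0} = 1$, and we suppose that at step $m$ both algorithms have arrived at the same polynomial (after natural identification of indices).

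At each step, $\mm$ chooses the group of its partition achieving the minimum $w_S(g_{\ii_m})$ of the current polynomial with the smallest index; this minimum value is partition-independent, being equal to $\min_{\p \in S} w_\p(g_{\ii_m})$ in both cases. Thanks to the order-preserving hypothesis, the smallest-indexed group of $P$ achieving the minimum is precisely the $I_i$ containing the prime $\q_j$ selected by $\mm[S]$. Hence the two algorithms agree on the underlying prime picked at step $m$.

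The multiplicative updates differ only in form: $\mm[S]$ multiplies $g_{\ii_m}$ by $g_{i_{\q_j}+1,\q_j}/g_{i_{\q_j},\q_j}$, whereas $\mm[P]$ multiplies by $g_{i_{I_i}+1,I_i}/g_{i_{I_i},I_i}$, with the numerators taken from the precomputed Okutsu $I_i$-basis output by $\mm[I_i]$. These updates coincide exactly by the precomputation hypothesis for $I_i$: by Definition \ref{def:precomputation}, the successive numerators produced by $\mm[I_i]$ correspond, under natural identification, to the subsequence of picks of $\mm[S]$ that fall in $I_i$. By the inductive hypothesis the two algorithms have tracked the same primes through step $m$, so $\mm[P]$'s $\ell$-th pick in $I_i$ aligns with $\mm[S]$'s $\ell$-th pick in $I_i$; hence the updates match and the resulting polynomials $g_{\ii_{m+1}}$ agree.

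The only real obstacle is keeping clean bookkeeping of the correspondence between the two indexings throughout the induction, which is exactly what the phrase ``natural identification'' is designed to package; and the fact that one needs this compatibility for each of the $t$ intervals simultaneously is precisely what the hypothesis ``all intervals $I_j$ admit precomputation'' supplies. Since both algorithms terminate after exactly $n_S$ steps, the identification of outputs at each step yields the claim.
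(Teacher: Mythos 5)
Your proposal is correct. The paper marks Lemma \ref{lem:precomputation2} as an immediate consequence of Definition \ref{def:precomputation} and gives no proof; your induction on the step number, using the order-preserving hypothesis to match the selected interval $I_i$ (since the smallest-indexed prime minimizer determines the smallest-indexed block minimizer) and the precomputation hypothesis for $I_i$ to match the within-block update, is precisely the intended unpacking of that ``immediate'' claim.
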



Let us give a criterion for an interval to admit precomputation.

\begin{lemma}
	\label{lem:precomputation} 
	Let $\ii_{0}, \ii_{1}, \dots, \ii_{n_{S}}$ be the output of $\mm[S]$ and let $I \subseteq S$ be an interval of $S$. For each $0 \leq k \leq n_{S}$, let $\ii_{k} = \ind{i_{\q}}_{\q \in S}$ and denote 
	\begin{align*}
		g_{\ii_{k}} &= \prod_{\q \in S} g_{i_{\q},\q}, \qquad
		G_{\ii_{k}} = \prod_{\q \in S \setminus I} g_{i_{\q},\q}.
	\end{align*}

	Suppose that for each $0 \leq k \leq n_{S}$ the following condition holds
	\begin{align*}
		\w[I]{g_{\ii_{k}}} = \w[S]{g_{\ii_{k}}} &\implies \w[\p]{G_{\ii_{k}}} = \w[\q]{G_{\ii_{k}}},	& \forall\  \p,\q \in I.
	\end{align*}
	
	Then, $I$ admits precomputation.
\end{lemma}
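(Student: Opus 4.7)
The plan is to establish the conclusion by induction on $k$, maintaining two invariants: (i) the $k$-th outputs $\ii_k$ of $\mm[S]$ and $\jj_k$ of \eqref{eq:maxmin_precomp_i} agree under the natural identification of Definition \ref{def:precomputation}, and (ii) writing $m_k$ for the number of iterations $0 \le j < k$ at which $\mm[S]$ incremented a coordinate indexed by a prime of $I$, the restriction $\ii_k|_I := (i_\q)_{\q \in I}$ coincides with the $m_k$-th output of $\mm[I]$. The base case $k = 0$ is trivial. Invariant (i), combined with the interpretation of $\jj_k$ as carrying the $m_k$-th polynomial $h_{m_k} := \prod_{\q \in I} g_{i_\q,\q}$ from the precomputed $I$-basis, gives $g_{\ii_k} = g_{\jj_k}$ as polynomials, so the two algorithms work with identical $w$-valuations at every iteration.

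For the inductive step, let $j_0$ be the prime index incremented by $\mm[S]$ at step $k$. Suppose first $\q_{j_0} \notin I$. Minimality of $j_0$ forces $\w[\q_i]{g_{\ii_k}} > \w[S]{g_{\ii_k}}$ for all $i < j_0$; this strict inequality transfers to every singleton interval preceding the natural image of $j_0$, and when $j_0 > b$ it applies to every $\q \in I$, yielding $\w[I]{g_{\jj_k}} > \w[S]{g_{\jj_k}}$. Either way the precomputed algorithm selects the natural image of $j_0$, preserving (i); since $\ii_{k+1}|_I = \ii_k|_I$ and $m_{k+1} = m_k$, invariant (ii) is preserved trivially. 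Suppose instead that $\q_{j_0} \in I$. Then $\w[I]{g_{\ii_k}} \le \w[\q_{j_0}]{g_{\ii_k}} = \w[S]{g_{\ii_k}}$ forces the equality $\w[I]{g_{\ii_k}} = \w[S]{g_{\ii_k}}$, and the hypothesis of the lemma activates: $\mu := \w[\p]{G_{\ii_k}}$ is constant as $\p$ ranges over $I$. Writing $h := \prod_{\q \in I} g_{i_\q, \q}$, we get $\w[\p]{g_{\ii_k}} = \mu + \w[\p]{h}$ for all $\p \in I$, so a prime in $[a,b]$ achieves the minimum of $w_\cdot(g_{\ii_k})$ over $I$ if and only if it achieves the minimum of $w_\cdot(h)$ over $I$. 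Thus $j_0$ is also the minimum index in $[a,b]$ realising $\w[\q_{j_0}]{h} = \w[I]{h}$. By (ii), $h = h_{m_k}$ is precisely the polynomial attached to the $m_k$-th step of $\mm[I]$, so $\mm[I]$ would select the same coordinate $\q_{j_0}$ at its next step; this preserves (ii). Simultaneously, the strict inequalities for singletons with index $< a$ and the equality $\w[I]{g_{\jj_k}} = \w[S]{g_{\jj_k}}$ force the precomputed algorithm to advance the $I$-coordinate from $m_k$ to $m_k + 1$, which matches $\ii_{k+1}$ under natural identification, preserving (i).

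The main obstacle is the coordination between (i) and (ii): without the hypothesis of the lemma, $\mm[S]$'s choice among primes of $I$ could be biased by the external factor $G_{\ii_k}$ and thereby diverge from the intrinsic choice $\mm[I]$ would make using only $h$. The hypothesis is designed exactly to neutralise this external factor precisely in the regime $\w[I]{g_{\ii_k}} = \w[S]{g_{\ii_k}}$, which is the only regime in which a prime of $I$ is eligible to be incremented. Once this neutralisation is in place, the induction runs cleanly, the two algorithms produce identical trajectories under natural identification, and $I$ admits precomputation.
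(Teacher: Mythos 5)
Your proof is correct and follows essentially the same strategy as the paper's: induct on $k$ to show that $\mm[S]$ and the precomputed algorithm produce identified outputs, and, when the incremented coordinate lies in $I$, use the hypothesis to show that the external factor $G_{\ii_k}$ contributes a constant shift across $\p\in I$, so the choice within $I$ is determined by the $I$-part alone and therefore agrees with $\mm[I]$. Your explicit bookkeeping via the twin invariants (i) and (ii) is simply an unpacking of what the paper calls ``natural identification''; the substance of the argument is identical.
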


\begin{proof}
	Let $(\ii_k)_{0 \leq k \leq n_{S}}$ be the output of $\mm[S]$ and $(\jj_k)_{0 \le k \le n_{S}}$ be the output of the precomputed $\mm$ algorithm \eqref{eq:maxmin_precomp_i}.
	
	Clearly, $\ii_{0}$ and $\jj_{0}$ may be identified. For $k \geq 0$, suppose that $\ii_{k}$ may be identified with $\jj_{k}$.  This means
	\begin{align*}
		\ii_{k} &= \ind{ i_{\q_{1}}, \dots, i_{\q_{s}} }, \\
		\jj_{k} &= \ind{ i_{\q_{1}}, \dots, i_{\q_{a}-1}, i_{I}, i_{\q_{b}+1}, \dots, i_{\q_{s}} },
	\end{align*}
	while the $i_{I}$-th output of $\mm[I]$ is the multi-index $\ind{ i_{\q_{j}} }_{a \leq j \leq b}$.
	
	Let $g'_{0},\, \dots,\, g'_{n_{I}}$ be the numerators deduced from the application of $\mm[I]$ and $g'_{\jj_{0}},\, \dots,\, g'_{\jj_{n_{S}}}$ the numerators deduced from \eqref{eq:maxmin_precomp_i}. Clearly,
	\begin{align*}
		g'_{\jj_{k}} &= g'_{i_{I}} \prod_{\q \in S \setminus I} g_{i_{\q},\q} = \prod_{\q \in I} g_{i_{\q},\q} \prod_{\q \in S\setminus I} g_{i_{\q},\q} = g_{\ii_{k}}, \qquad
		g'_{i_{I}} = \prod_{m = a}^{b} g_{i_{\q_{m}},\q_{m}}.
	\end{align*}
	
	The algorithm $\mm[S]$ outputs $\ii_{k+1} = \ii_{k} + \uu_{j}$, where
	\begin{align*}
		j = \Min\set{ 1 \leq m \leq s : \w[\q_{m}]{g_{\ii_{k}}} = \w[S]{g_{\ii_{k}}} }.
	\end{align*}
	
	If $\q_{j} \not\in I$, then the $\q_{j}$-index in $\jj_{k}$ will also be the least index satisfying $\w[\q_{j}]{g'_{\jj_{k}}} = \w[S]{g'_{\jj_{k}}}$, since $g'_{\jj_{k}} = g_{\ii_{k}}$. Thus, the algorithm in \eqref{eq:maxmin_precomp_i} will also increase the $\q_j$-coordinate.
	
	If $\q_{j} \in I$, then $\w[I]{g_{\ii_{k}}} = \w[S]{g_{\ii_{k}}}$ and $\w[\q_{m}]{g_{\ii_{k}}} > \w[S]{g_{\ii_{k}}}$ for all $m < a$; thus, \eqref{eq:maxmin_precomp_i} will increase $i_{I}$ by one. In this case, we must show that  the $(i_{I}+1)$-th output of $\mm[I]$ is the multi-index obtained from $\ind{i_{\q_{j}}}_{a \leq j \leq b}$ by increasing the $\q_{j}$-coordinate by one.
	
	The index increased by $\mm[I]$ will be:
	\begin{align*}
		J &= \Min\set{ a \leq m \leq b : \w[\q_{m}]{g'_{i_{I}}} = \w[I]{g'_{i_{I}}} }.
	\end{align*}
	
	By hypothesis, $\nu := \w[\q]{G_{\ii_{k}}}$ is independent of the choice of $\q \in I$. Since $g_{\ii_{k}} = G_{\ii_{k}} g'_{i_{I}}$,  we have:
	\begin{align*}
		\w[\q]{g_{\ii_{k}}} &= \w[\q]{g'_{i_{I}}} + \nu,	&\forall\  \q \in I.
	\end{align*}
	
	In particular, $\w[S]{g_{\ii_{k}}} = \w[I]{g_{\ii_{k}}} = \w[I]{g'_{i_{I}}} + \nu$, so that $J = j$.
\end{proof}

One specific case of precomputation which we will make use of, is the precomputation of certain intervals $S_{\type} \subseteq S$ defined by a type $\type$. 

\begin{lemma}
	\label{lem:st_precomputable}
	For any $\type \in \tree$, if the interval $S_{\type}$ is non-empty, it admits precomputation. 
\end{lemma}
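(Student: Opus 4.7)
The plan is to invoke Lemma \ref{lem:precomputation} for the interval $I = S_\type$, by verifying its hypothesis in the stronger, unconditional form: for every multi-index $\ii_k$ in the output of $\mm[S]$ and every pair $\p,\q \in S_\type$, one has $\w[\p]{G_{\ii_k}} = \w[\q]{G_{\ii_k}}$. Since $G_{\ii_k} = \prod_{\r \in S \setminus S_\type} g_{i_\r,\r}$ and each Okutsu numerator $g_{i_\r,\r}$ is a product of powers of the polynomials $\phi_{1,\r},\dots,\phi_{r_\r,\r},\phi_\r$, additivity of $w_\p$ and $w_\q$ reduces the statement to showing
\begin{align*}
	\w[\p]{\phi_{j,\r}(\theta)} = \w[\q]{\phi_{j,\r}(\theta)}, \qquad 1 \le j \le r_\r + 1,
\end{align*}
for every $\r \in S \setminus S_\type$ and every $\p,\q \in S_\type$.

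The key observation driving this pointwise equality is tree-combinatorial. By the definition of $S_\type$, the node $\type$ lies on both paths joining the root of $\tree$ to $\type_\p$ and to $\type_\q$, but not on the path joining the root to $\type_\r$. Hence the branch leading to $\type_\r$ departs the common subtree containing $\type_\p$ and $\type_\q$ at some ancestor of $\type$ strictly shallower than $\type$; this ``exit node'' coincides for the two pairs $(\p,\r)$ and $(\q,\r)$. I would deduce from this that $i(\p,\r) = i(\q,\r) =: \ell$, and then, inspecting the refinement lists at level $\ell$ recorded in \eqref{eq:refinement_list}, conclude that
\begin{align*}
	\phi(\p,\r) = \phi(\q,\r), \qquad \clam{\p}{\r} = \clam{\q}{\r}, \qquad \clam{\r}{\p} = \clam{\r}{\q}.
\end{align*}

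With these coincidences in hand, I would simply apply Proposition \ref{prop:explicit_valuations} to $w_\p(\phi_{j,\r}(\theta))$ and to $w_\q(\phi_{j,\r}(\theta))$: the four cases of the formula are selected by the same condition in both computations (since $i(\p,\r) = i(\q,\r)$ and $\phi_{\ell,\r} = \phi(\p,\r)$ iff $\phi_{\ell,\r} = \phi(\q,\r)$), and in each case the right-hand side depends only on invariants intrinsic to $\r$, on data of the common truncation of $\type_\p$ and $\type_\q$ at level $\ell-1$, and on either $\clam{\p}{\r}$ or $\minclam{\p}{\r}$ — all of which are unchanged upon replacing $\p$ by $\q$. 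Equality of the two valuations then closes the argument via Lemma \ref{lem:precomputation}.

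I expect the main obstacle to lie in the middle step: rigorously articulating why the hidden-slope data and the greatest common $\phi$-polynomial attached to the pair $(\p,\r)$ depend only on the exit node at which $\type_\r$ leaves the cluster $S_\type$ — and not on the particular $\p \in S_\type$ chosen. This requires carefully tracing the bookkeeping of refinements described in Section \ref{sec:nonoptimised_tree} and invoking coherence of the non-optimised tree to ensure that the common-ancestor structure of $\type_\p$, $\type_\q$, $\type_\r$ behaves as Figure \ref{fig:descendents_of_mm} suggests.
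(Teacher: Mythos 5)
Your proof is correct and takes essentially the same approach as the paper's: reduce via Lemma \ref{lem:precomputation} and the additivity of $w_\p$ to the pointwise claim that $w_\p(\phi_{j,\r}(\theta))$ is independent of $\p \in S_\type$ for every $\r \notin S_\type$, then read this off from Proposition \ref{prop:explicit_valuations}. The paper dispatches this in two sentences; you have correctly supplied the tree-combinatorial justification — that $i(\p,\r)$, $\phi(\p,\r)$, and the hidden slopes $\clam{\p}{\r}$, $\clam{\r}{\p}$ all depend only on the shared edge at the divergence level, which lies at order no greater than that of $\type$ and hence is common to every $\p \in S_\type$ — behind the paper's bare assertion that the explicit formulas yield the required independence.
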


\begin{proof}
	For every $\p \in S_{\type}$ and every $\q \not\in S_{\type}$, the explicit formulas from Proposition \ref{prop:explicit_valuations} show that $\w[\p]{\phi_{i,\q}}$ is independent of $\p$, for all $i$. Hence, the same is true for all polynomials $G_{\ii_{k}}$ that are a product of these $\phi$-polynomials.

	Thus, $S_{\type}$ meets the criterion of Lemma \ref{lem:precomputation}.
\end{proof}

In \cite{Stainsby:2014:thesis} we give concrete examples of intervals $I \subset S$ which do not admitprecomputation.

\subsection{The block-wise MaxMin algorithm}
\label{sec:maxmin_mod_ml}

Consider an ordered subset $S = \set{ \q_{1}, \dots, \q_{s} } \subseteq \pp$. Let $\tree_{S}$ be the tree gathering all the paths of all leaves $\type_{\p} \in \tree$ for $\p \in S$. Take
\begin{align*}
	\ell = i(S) := \Min\set{i(\p,\q) : \p, \q \in S}.
\end{align*}
The tree $\tree_{S}$ is disconnected if and only if $\ell = 0$. Assume from now on that $\ell \ge 1$; in this case, $\ell-1$ is the order of the greatest common node of all paths joining the leaves of $\tree_{S}$ with the root node. In this case, the Okutsu frames of all primes $\p \in S$ have the same first $\ell-1$ key polynomials $\phi_{1}, \dots, \phi_{\ell-1}$. Thus, the first $\ml$ numerators of the Okutsu $\p$-bases coincide for all $\p \in S$. Let
\begin{align*}
	\Num = \set{ 1 = h_{0}, h_{1}, \dots, h_{\ml-1} },
\end{align*}
be the family of these common numerators. Note that
\begin{align}
	\label{eq:shared_basis_vals}
	\w[\p]{h} &= \w[\q]{h},		&\forall\ \p, \q \in S,\  \forall\  h \in \Num.
\end{align}

\begin{lemma}
	\label{lem:shared_basis_vals}
	For all $\p, \q \in S$ and all $0 \leq r,t < \ml$:
	\begin{align*}
		 \w[\q]{h_{r} h_{t}} \leq
		\begin{cases}
			\w[\q]{h_{r+t}},					& \text{if } r + t < \ml, \\
			\w[\q]{\phi_{\ell,\p} h_{k}},	& \text{if } r + t = \ml + k,\  k \ge 0.
		\end{cases}
	\end{align*}
\end{lemma}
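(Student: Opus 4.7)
The plan is to split on whether $r+t<\ml$ (Case 1) or $r+t=\ml+k$ with $k\ge 0$ (Case 2), unified by the key observation that $\w[\q]{h_r h_t}$ does not depend on $\q\in S$: since $h_r h_t$ is a product of two members of $\Num$, this follows from \eqref{eq:shared_basis_vals} and additivity of the valuation. For Case 1, $h_{r+t}$ is itself the common Okutsu basis numerator $g_{r+t,\p}$ for every $\p\in S$, so Theorem \ref{thm:okutsu_basis_is_basis} applied at $\q$ yields $\w[\q]{h_r h_t}\le \w[\q]{g_{r+t,\q}}=\w[\q]{h_{r+t}}$ directly.

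For Case 2, I would expand the three numerators in their canonical Okutsu form: $h_r=x^{a_0}\prod_{j=1}^{\ell-1}\phi_j^{a_j}$, and likewise $h_t,h_k$ with coefficients $(b_j),(c_j)$ subject to $0\le a_0,b_0,c_0<f_0$ and $0\le a_j,b_j,c_j<e_j f_j$ for $1\le j<\ell$. The identity $r+t=\ml+k$ canonicalises via carries $\epsilon_j\in\{0,1\}$ obeying $a_j+b_j+\epsilon_{j-1}=c_j+e_j f_j\epsilon_j$ (with the conventions $e_0:=1$, $\epsilon_{-1}:=0$, and the final carry $\epsilon_{\ell-1}=1$). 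Writing $W_j:=\w[\q]{\phi_j}$---which for $j\ge 1$ equals $(V_j+\lambda_j)/\eto{j-1}$ by Proposition \ref{prop:explicit_valuations} and is thus the same for every $\q\in S$---I would compute
\begin{align*}
\w[\q]{h_r h_t}-\w[\q]{h_k} &= \sum_{j=0}^{\ell-1}(a_j+b_j-c_j)W_j = \sum_{j=0}^{\ell-1}(e_j f_j\epsilon_j-\epsilon_{j-1})W_j,
\end{align*}
and Abel-rearrange using the identity $e_j f_j W_j=V_{j+1}/\eto{j}$ for $j\ge 1$ (a consequence of $V_{j+1}=e_j f_j(e_j V_j+h_j)$ together with $h_j=e_j\lambda_j$); the sum telescopes to $V_\ell/\eto{\ell-1}$ minus a non-negative correction, giving $\w[\q]{h_r h_t}-\w[\q]{h_k}\le V_\ell/\eto{\ell-1}$. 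Non-negativity of the correction rests on $W_{j+1}>e_j f_j W_j$ at each $0\le j\le\ell-2$: for $j\ge 1$ this is $\lambda_{j+1}/\eto{j}>0$, while at $j=0$ it reduces to the strict Okutsu optimality $W_1>f_0 W_0$ of $\phi_1$ over $x^{f_0}$, which holds when $f_0>1$ (and when $f_0=1$ the constraints force $a_0=b_0=0$, whence $\epsilon_0=0$ and the boundary term drops harmlessly).

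Finally, Proposition \ref{prop:explicit_valuations} yields $\w[\q]{\phi_{\ell,\p}}=(V_\ell+X)/\eto{\ell-1}$, where $X$ is $\lambda_{\ell,\q}$, $\clam{\q}{\p}$, or $\minclam{\q}{\p}$ according to whether $i(\p,\q)>\ell$, $i(\p,\q)=\ell$ with $\phi_{\ell,\p}=\phi(\p,\q)$, or $i(\p,\q)=\ell$ otherwise. Each such $X$ is strictly positive (being a sum of at least one positive slope), so $\w[\q]{\phi_{\ell,\p}}>V_\ell/\eto{\ell-1}\ge \w[\q]{h_r h_t}-\w[\q]{h_k}$, which rearranges to the claimed inequality $\w[\q]{h_r h_t}\le\w[\q]{\phi_{\ell,\p}h_k}$. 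The main technical obstacle is the Abel-summation step in Case 2---particularly the careful bookkeeping of the carries $\epsilon_j$, and the minor edge case at $j=0$ where the recurrence of Proposition \ref{prop:explicit_valuations} does not itself govern $W_0$ and one must fall back on Okutsu strict optimality.
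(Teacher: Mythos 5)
Your argument is correct, and it rests on the same core mechanism the paper uses: canonicalising the $\phi$-exponents in $h_r h_t$ by successive ``carries,'' where each replacement of $\phi_{i-1}^{e_{i-1}f_{i-1}}$ by $\phi_{i,\p}$ can only increase the $\q$-valuation. The paper compresses this into a single sentence by citing Lemma~\ref{lem:mostly_increasing_cross_values}~(1), whereas you unpack the carry process into an explicit Abel-summation with carry bits $\epsilon_j$ and re-derive the monotonicity $e_jf_jW_j<W_{j+1}$ directly from Proposition~\ref{prop:explicit_valuations}. Your version has the merit of confronting the $j=0$ boundary, which Lemma~\ref{lem:mostly_increasing_cross_values} as stated only covers for $1\le i<\ell$; you correctly observe that either $f_0=1$ (no level-$0$ carry is possible) or $f_0>1$ forces $W_0=w_\q(\theta)=0<W_1$. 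The one place you genuinely diverge is Case~1 ($r+t<\ml$), which you dispatch directly via Okutsu optimality (Theorem~\ref{thm:okutsu_basis_is_basis}) applied at $\q$, a valid shortcut the paper does not take; note, however, that the analogous shortcut cannot be used in Case~2, since Okutsu optimality would only bound $\w[\q]{h_rh_t}$ by $\w[\q]{\phi_{\ell,\q}h_k}$, which by Remark~\ref{rmk:nonop_valuations}~(1) can strictly exceed the target $\w[\q]{\phi_{\ell,\p}h_k}$, so the carry/Abel argument is indispensable there. In short: same idea, more detail, one slick substitution in the easy case.
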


\begin{proof}
	By Lemma \ref{lem:mostly_increasing_cross_values} (1), in any product of powers of $\phi_{1}, \dots, \phi_{\ell-1}$ we may replace $\phi_{i-1}^{e_{i-1} f_{i-1}}$ with $\phi_{i,\p}$ to increase the $\q$-valuation. This proves both inequalities.
\end{proof}

\begin{lemma}
	\label{lem:m_ell} 
	Let $\ii$ be a maximal multi-index of degree divisible by $\ml$.
	\begin{enumerate}
		\item There exists a maximal multi-index $\ii' = (i'_{\p})_{\p \in S}$ of the same degree, having all its coordinates $i'_{\p}$ divisible by $\ml$.
		\item All elements in the family $g_{\ii} \Num$ are maximal numerators.
	\end{enumerate}
\end{lemma}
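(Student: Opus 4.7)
My plan rests on the identity $g_{i,\p}=g_{\tilde i,\p}\cdot h_r$ with $i=\tilde i+r$, $m_\ell\mid\tilde i$, $0\le r<m_\ell$. The Okutsu expansion of $\tilde i$ uses only $\phi_{j,\p}$ with $j\ge\ell$, and for $r<m_\ell$ the polynomial $h_r=g_{r,\p}\in\Num$ is independent of $\p$. Consequently, for any multi-index $\ii=(i_\p)$ with residues $r_\p:=i_\p\bmod m_\ell$ and truncation $\tilde\ii:=(\tilde i_\p)$,
\begin{align*}
g_\ii = g_{\tilde\ii}\cdot\prod_{\p\in S}h_{r_\p},\qquad \sum_{\p} r_\p\equiv\deg\ii\pmod{m_\ell}.
\end{align*}

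For part~(1), set $R:=\sum_{\p} r_\p$, a non-negative multiple of $m_\ell$. I argue by descent on $R$. If $R=0$ we are done; if $R\ge m_\ell$, I combine nonzero residues using Lemma~\ref{lem:shared_basis_vals}. When no pair satisfies $r_{\p_1}+r_{\p_2}\ge m_\ell$, the first case of that lemma lets me merge two residues whose sum is still $<m_\ell$ into a single residue, preserving both $R$ and $\w[S]{g_\ii}$ and reducing the number of nonzero $r_\p$; this must eventually expose a pair with sum $\ge m_\ell$, as otherwise the consolidation would leave a single residue $r_\p<m_\ell$, contradicting $R\ge m_\ell$. Once such a pair $\p_1,\p_2$ exists, the second case of Lemma~\ref{lem:shared_basis_vals} with $\p:=\p_1$ bounds $h_{r_{\p_1}}h_{r_{\p_2}}$ by $\phi_{\ell,\p_1}\cdot h_{r_{\p_1}+r_{\p_2}-m_\ell}$. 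The resulting polynomial lies in $\Phi(S)$, and Proposition~\ref{prop:okutsu_basis_comb_optimal} yields $\ii^*\in\Ok(S)$ of the same degree with $\w[S]{g_{\ii^*}}\ge\w[S]{g_\ii}$. Maximality of $\ii$ upgrades this to equality, so $\ii^*$ is also maximal with $\sum_{\p} r^*_\p=R-m_\ell$, and the descent terminates at $R=0$.

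For part~(2), part~(1) lets me assume $m_\ell\mid i_\p$ for every $\p$. Fix $h_k\in\Num$; the case $k=0$ is trivial. For $0<k<m_\ell$, divisibility of $d:=\deg\ii$ and $n_S$ by $m_\ell$ together with $d+k\le n_S$ force $d<n_S$, so some $\p_0\in S$ has $i_{\p_0}\le n_{\p_0}-m_\ell$. Setting $\ii':=\ii+k\uu_{\p_0}$, the expansion of $i_{\p_0}+k$ is carry-free (since $m_\ell\mid i_{\p_0}$ and $k<m_\ell$), so $g_{\ii'}=g_\ii\cdot h_k$. To verify maximality of $\ii'$, take any $\jj$ of degree $d+k$ and decompose $g_\jj=g_{\tilde\jj}\cdot\prod_{\p} h_{s_\p}$ with $\sum_{\p} s_\p=k+k'\,m_\ell$. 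Iterated use of Lemma~\ref{lem:shared_basis_vals} (choosing the absorbing prime $\q_i\in S$ freely at each application of its second case) yields
\begin{align*}
\w[S]{\textstyle\prod_{\p} h_{s_\p}} \le \w[S]{h_k} + \w[S]{\textstyle\prod_{i=1}^{k'}\phi_{\ell,\q_i}},
\end{align*}
whence $\w[S]{g_\jj}\le\w[S]{g_{\tilde\jj}\prod_i\phi_{\ell,\q_i}}+\w[S]{h_k}$. Proposition~\ref{prop:okutsu_basis_comb_optimal} applied to the degree-$d$ polynomial $g_{\tilde\jj}\prod_i\phi_{\ell,\q_i}\in\Phi(S)$, combined with the maximality of $\ii$, gives $\w[S]{g_\jj}\le\w[S]{g_\ii}+\w[S]{h_k}=\w[S]{g_{\ii'}}$.

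The main obstacle in both parts is the same: the polynomials produced by Lemma~\ref{lem:shared_basis_vals} naturally live in $\Phi(S)$ rather than $\Ok(S)$, because absorbing a $\phi_{\ell,\p}$ factor into an Okutsu product may provoke a carry in the canonical Okutsu expansion. Proposition~\ref{prop:okutsu_basis_comb_optimal} is what bridges this gap, and the passage from its $\wt{h}$-statement to the $\w[S]{h}$-inequalities needed here follows the same interchange pattern used in the proof of Theorem~\ref{thm:maxmin_s_basis}.
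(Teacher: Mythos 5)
Your overall strategy --- decomposing $g_\ii$ into an ``above-$\ml$'' part and shared low-level residue factors $h_{r_\p}$, then combining residues with Lemma~\ref{lem:shared_basis_vals} --- matches the paper's, but the descent in part~(1) has a genuine gap. You claim that after one application of Lemma~\ref{lem:shared_basis_vals} case~2 plus Proposition~\ref{prop:okutsu_basis_comb_optimal} you obtain a maximal multi-index $\ii^*$ with $\sum_\p r^*_\p = R - \ml$. Proposition~\ref{prop:okutsu_basis_comb_optimal} provides no such structural control: it only produces \emph{some} $g \in \Ok(S)$ of the prescribed degree with $\wt[\p]{g} \ge \wt[\p]{\phi}$, and the multi-index to which this $g$ corresponds may have any residue sum congruent to $\deg\ii \equiv 0 \pmod{\ml}$. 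Nothing forces $R^* < R$, so the descent need not terminate. The polynomial $g'$ produced by Lemma~\ref{lem:shared_basis_vals} does have residue sum $R - \ml$, but $g'$ lives in $\Phi(S)$ rather than $\Ok(S)$, and re-entering $\Ok(S)$ through Proposition~\ref{prop:okutsu_basis_comb_optimal} discards exactly the bookkeeping your induction depends on.

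The paper's proof avoids any descent. It iterates Lemma~\ref{lem:shared_basis_vals} in a single chain of inequalities to replace the entire product $\prod_\p h_{k_\p}$ by $\phi_{\ell,\pz}^q h_j$ at once (taking $j=0$ for item~(1)), obtaining $\w[\q]{g_\ii} \le \w[\q]{g\,\phi_{\ell,\pz}^q}$ for every $\q\in S$ where $g = \prod_\p G_\p$ already has all Okutsu coordinates divisible by $\ml$; the extra factor $\phi_{\ell,\pz}^q$ raises only the $\pz$-coordinate, by a multiple of $\ml$. Proposition~\ref{prop:okutsu_basis_comb_optimal} is never invoked, and there are no intermediate objects whose residue sums must be tracked. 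Item~(2) then follows from the same single-pass inequality with $j\ne 0$, combined with the maximality of $\ii$. A secondary issue in your part~(2): you begin by ``assuming $\ml \divides i_\p$ for every $\p$'' via part~(1), but item~(2) is a statement about the given $\ii$, while item~(1) produces a possibly different $\ii'$; to justify the reduction you must first note that $\w[\q]{h_j}$ is independent of $\q$ (so $\w[S]{g_\ii h_j} = \w[S]{g_\ii} + \w[\pz]{h_j} = \w[S]{g_{\ii'} h_j}$), and also that $g_\ii h_j$ is actually realised as $g_{\jj'}$ for a multi-index $\jj'$.
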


\begin{proof}
	For $0 \leq j < \ml$, let $\jj = (j_{\p})_{\p \in S}$ be a multi-index of degree $i \ml + j$. Each index $j_{\p}$ may be written
	\begin{align*}
		j_{\p} &= q_{\p} \ml + k_{\p},	& 0 \leq k_{\p} < \ml,
	\end{align*}
	and the numerators $g_{j_{\p},\p}$ of the Okutsu $\p$-basis may be written
	\begin{align*}
		g_{j_{\p},\p} &= G_{\p} h_{k_{\p}},	& \deg G_{\p} = q_{\p} \ml.
	\end{align*}
	
	Since all polynomials $G_{\p}$ have a degree which is a multiple of $\ml$, we have $\sum_{\p \in S} k_{\p} = q \ml + j$,	for some non-negative integer $q$.
	
	Let $g = \prod_{\p \in S} G_{\p}$ and choose any fixed prime ideal $\pz \in S$. By \eqref{eq:shared_basis_vals}, an iterative application of the inequalities in Lemma \ref{lem:shared_basis_vals} shows that for any $\q \in S$ we have
	\begin{align*}
		\w[\q]{g_{\jj}} &= \w[\q]{ \prod_{\p \in S} G_{\p} h_{k_{\p}} } 
			= \w[\q]{ g } + \w[\q]{ \prod_{\p \in S} h_{k_{\p}} }
			\le \w[\q]{g} + \w[\q]{\phi_{\ell,\pz}^{q} h_{j}}.
	\end{align*}
	Since this holds for all $\q \in S$, we deduce that $\w[S]{g_{\jj}} \le \w[S]{g \phi_{\ell,\q}^{q} h_{j}}$.
	
%
%
	
	These arguments, applied to $\jj = \ii$ (and $j = 0$) prove item (1). Also, applied to an arbitrary $\jj$ of degree $\deg(\ii) + j$ show that
	\begin{align*}
		\w[S]{g_{\jj}} &\leq \w[S]{\phi_{\ell,\p_{0}}^{q} \cdot g} + \w[\p_{0}]{h_{j}} \le \w[S]{g_{\ii}} + \w[\p_{0}]{h_{j}} = \w[S]{g_{\ii} h_{j}},
	\end{align*}
	by the maximality of $g_{\ii}$. This proves item (2).
\end{proof}

\begin{lemma}
	\label{lem:maxmin_blocks} 
	Let $\ii = (i_{\q})_{\q \in S}$ be an output multi-index of $\mm[S]$ of degree divisible by $\ml$.
	\begin{enumerate}
		\item All coordinates $i_{\q}$ are divisible by $\ml$.
		\item Let $j = \Min\set{ 1 \leq m \leq s : \w[\q_{m}]{g_{\ii}} = \w[S]{g_{\ii}} }$. Then, the next $\ml$ iterations of $\mm[S]$ increase the coordinate $\q_{j}$.
	\end{enumerate}
\end{lemma}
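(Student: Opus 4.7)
The plan is to prove (1) and (2) jointly by induction, since they are genuinely interlinked: the argument for (2) at the output $\ii_{k\ml}$ uses that every coordinate of $\ii_{k\ml}$ is divisible by $\ml$, i.e.\ (1) at that stage; and once (2) is established at $\ii_{k\ml}$ it forces $\ii_{(k+1)\ml} = \ii_{k\ml} + \ml \uu_j$, yielding (1) at $\ii_{(k+1)\ml}$. The base case $\ii_0 = (0, \dots, 0)$ is immediate, so I would run the induction on $k$ jumping by blocks of $\ml$ iterations.

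The computational crux is the following factorisation identity. Suppose $i_\q$ is divisible by $\ml = m_\ell$, and write $i_\q = \sum_{i \geq 0} a_i m_i$ in the mixed-radix system governing the Okutsu numerators (with $0 \leq a_i < m_{i+1}/m_i$). Since $\sum_{i < \ell} a_i m_i < m_\ell$ and this partial sum must itself be divisible by $m_\ell$, one gets $a_0 = \cdots = a_{\ell-1} = 0$. Hence for any $0 \leq c < \ml$, the expansion of $i_\q + c$ is obtained by prefixing the digits of $c$ to those of $i_\q$ without any carry, and so
\[
  g_{i_\q + c, \q} \;=\; h_c \cdot g_{i_\q, \q}.
\]
This makes the common shared family $\Num = \set{1 = h_0, h_1, \dots, h_{\ml-1}}$ act cleanly on the $\q$-numerator at any multiple-of-$\ml$ coordinate.

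With this in hand, fix $\ii = \ii_{k\ml}$ satisfying the induction hypothesis, and let $j$ be as in the statement. I would show by an inner induction on $c = 0, 1, \dots, \ml - 1$ that $\mm[S]$ increments the $\q_j$-coordinate at step $k\ml + c + 1$. Assuming $\ii_{k\ml + c} = \ii + c \uu_j$, the identity above gives $g_{\ii_{k\ml + c}} = g_\ii \cdot h_c$, and by \eqref{eq:shared_basis_vals} the quantity $w_{\q_m}(h_c)$ is the same for every $m$, say $\nu_c$. Therefore
\[
  w_{\q_m}(g_{\ii_{k\ml + c}}) \;=\; w_{\q_m}(g_\ii) + \nu_c, \qquad 1 \leq m \leq s.
\]
The set of $m$ achieving the minimum on the left coincides with the set achieving the minimum of $w_{\q_m}(g_\ii)$, and the smallest such index is still $j$ by choice. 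Thus $\mm[S]$ selects $\q_j$ at step $k\ml + c + 1$, closing the inner induction, proving (2), and forcing (1) at $\ii_{(k+1)\ml}$.

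The main obstacle I expect is the careful book-keeping required for the factorisation $g_{i_\q + c, \q} = h_c \cdot g_{i_\q, \q}$ for every $c \in [0, \ml)$, which hinges on the divisibility chain $m_1 \mid \cdots \mid m_\ell$ from \eqref{eq:ms} and on the fact that $\phi_{1,\q}, \dots, \phi_{\ell-1,\q}$ really are common to every $\q \in S$ (so that $h_0, \dots, h_{\ml-1}$ are unambiguously defined). A small side point worth recording is that $\ml \mid n_\q$ for every $\q \in S$, a consequence of $\ell \leq r_\q + 1$ and \eqref{eq:ms}, which guarantees that the $\ml$ successive increments of $\q_j$ never overshoot $n_{\q_j}$ and so the $w_{\q_j}$-valuations appearing in the intermediate steps all remain finite.
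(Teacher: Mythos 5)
Your proof is correct and takes essentially the same route as the paper: both exploit the factorisation $g_{\ii_{k\ml+c}} = g_{\ii_{k\ml}} h_c$ (via the mixed-radix structure of the Okutsu numerators and divisibility of the $j$-th coordinate by $\ml$) together with \eqref{eq:shared_basis_vals} to conclude that adding $\w[\q]{h_c}$, a constant independent of $\q\in S$, leaves the arg-min set and hence the selected index $j$ unchanged for $\ml$ consecutive iterations. The paper phrases this by induction on single steps rather than giving the closed-form product, but the content is identical; your side observation that $\ml \mid n_{\q}$ for all $\q\in S$ is a small detail the paper leaves implicit.
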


\begin{proof}
	All coordinates of $\ii_{0}$ are zero; hence divisible by $\ml$. Thus, it suffices to prove that any output multi-index $\ii=(i_{\q})_{\q \in S}$ whose coordinates are all divisible by $\ml$ satisfies (2).
	
	Let $j = \Min\set{ 1 \leq m \leq s : \w[\q_m]{g_{\ii}} = \w[S]{g_{\ii}} }$. If $\ii = \ii_{k}$ is the $k$-th output multi-index of $\mm[S]$, the algorithm selects $\ii_{k+1} = \ii_{k} + \uu_{j}$. Since $i_{\q_{j}}$ is a multiple of $\ml$, we have $g_{\ii_{k+1}} = g_{\ii_{k}} h_{1}$; hence, 
	\begin{align*}
		\w[\q]{g_{\ii_{k+1}}} &= \w[\q]{g_{\ii_{k}}} + \w[\q]{h_{1}}
			\geq \w[S]{g_{\ii_{k}}} + \w[\q]{h_{1}}
			= \w[S]{g_{\ii_{k+1}}},
	\end{align*}
	for all $\q \in S$. Thus, $\w[\q]{g_{\ii_{k+1}}} = \w[S]{g_{\ii_{k+1}}}$ if and only if $\w[\q]{g_{\ii_{k}}} = \w[S]{g_{\ii_{k}}}$. Thus, the next iteration increases the $\q_{j}$-coordinate again. By iterating this argument, we get $g_{\ii_{k+\ml-1}} = g_{\ii_{k}} h_{\ml-1}$. At this point, the $\q_{j}$-coordinate will be increased once more to yield $\ii_{k+\ml} = \ii_{k} + \ml \uu_{j}$.
\end{proof}

This result shows that $\mm[S]$ works by blocks of length $\ml$. Thus, we may consider Algorithm \ref{alg:maxmin_ml}, where we agree that $\ml = 1$ if $\tree_{S}$ is disconnected. Note that for $\ml = 1$, $\mm[S;\ml]$ coincides with $\mm[S]$.

\begin{algorithm}
	\caption{$\mm[S;\ml]$ algorithm}
	\label{alg:maxmin_ml}
	\begin{algorithmic}[1]
		\INPUT An ordered subset $S = \set{\q_{1}, \dots, \q_{s}} \subseteq \pp$ and extended families $\set{g_{i,\q} : 0 \le i \le n_{\q}}$ of numerators of Okutsu $\q$-bases of each $\q \in S$.
						
		\OUTPUT A family $\ii_{0}, \ii_{\ml}, \ii_{2\ml}, \dots, \ii_{n_{S}/\ml}$ of multi-indices with  $\deg \ii_{k} = k$, having all coordinates divisible by $\ml$.
				
		\STATE $\ii_{0} \gets \ind{ 0, \dots, 0 }$
		
		\FOR {$k = 0 \to (n_{S}/\ml)-1$}
		
			\STATE $j \gets \Min\set{ 1 \leq i \leq s : \w[\q_{i}]{g_{\ii_{k\ml}}} = \w[S]{g_{\ii_{k\ml}}} }$
						
			\STATE $\ii_{(k+1)\ml} \gets \ii_{k\ml} + \ml \uu_{j}$
			
		\ENDFOR
	\end{algorithmic}
\end{algorithm}


\begin{theorem}
	\label{thm:maxmin_ml_works} 
	The output multi-indices of $\mm[S;\ml]$ are maximal amongst all multi-indices of the same degree with coordinates divisible by~$\ml$.
\end{theorem}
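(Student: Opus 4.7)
My plan is to prove Theorem \ref{thm:maxmin_ml_works} by induction on the block index $k$, where $\ii_{k\ml}$ denotes the $k$-th output multi-index of $\mm[S;\ml]$. The base case $k=0$ is immediate, since $\ii_{0}=(0,\dots,0)$ is the only multi-index of degree $0$. For the inductive step, assume $\ii_{k\ml}$ is maximal among multi-indices of degree $k\ml$ whose coordinates are divisible by $\ml$; let $j$ be the index selected at Step 3, so $\w[\q_j]{g_{\ii_{k\ml}}}=\w[S]{g_{\ii_{k\ml}}}$, and set $\ii_{(k+1)\ml}:=\ii_{k\ml}+\ml\uu_j$. I must show $\w[S]{g_{\jj}}\le\w[S]{g_{\ii_{(k+1)\ml}}}$ for every $\jj$ of degree $(k+1)\ml$ whose coordinates are divisible by $\ml$.

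The heart of the proof is an exchange argument. Given such $\jj$, pick some $\q_i\in S$ with $\jj_{\q_i}>(\ii_{k\ml})_{\q_i}$; since both coordinates are divisible by $\ml$, this gap is at least $\ml$, so $\mathbbm{k}:=\jj-\ml\uu_i$ is again a multi-index of degree $k\ml$ with coordinates divisible by $\ml$, and by induction $\w[S]{g_{\mathbbm{k}}}\le\w[S]{g_{\ii_{k\ml}}}$. Write $g_{\jj}=g_{\mathbbm{k}}\cdot P_i$ and $g_{\ii_{(k+1)\ml}}=g_{\ii_{k\ml}}\cdot P_j$, where $P_i$ and $P_j$ are monic of degree $\ml$, encoding the increments of the $\q_i$- and $\q_j$-coordinates respectively; in the generic ``no carry'' case one has $P_i=\phi_{\ell,\q_i}$ and $P_j=\phi_{\ell,\q_j}$, while in general each is built from $\phi$-polynomials of levels $\ge\ell$ in the respective type. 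Proposition \ref{prop:explicit_valuations}, combined with the hidden-slope inequality $\lambda_{\ell,\q_j}\ge\clam{\q_j}{\q_i}$ of Remark \ref{rmk:nonop_valuations} and Lemmas \ref{lem:mostly_increasing_cross_values}, \ref{lem:safeq}, and \ref{lem:greater_cross_value}, yields $\w[\q_j]{P_i}\le\w[\q_j]{P_j}$. Assuming further the pointwise estimate $\w[\q_j]{g_{\mathbbm{k}}}\le\w[\q_j]{g_{\ii_{k\ml}}}$, one concludes
\begin{align*}
\w[S]{g_{\jj}}\le\w[\q_j]{g_{\jj}}=\w[\q_j]{g_{\mathbbm{k}}}+\w[\q_j]{P_i}\le\w[\q_j]{g_{\ii_{k\ml}}}+\w[\q_j]{P_j}=\w[S]{g_{\ii_{(k+1)\ml}}}.
\end{align*}

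The main obstacle is precisely this pointwise inequality $\w[\q_j]{g_{\mathbbm{k}}}\le\w[\q_j]{g_{\ii_{k\ml}}}$, since the outer induction only supplies the analogous inequality at the global minimum $w_S$, not at the specific valuation $w_{\q_j}$. To bridge the gap, I would run an inner induction on the depth of the tree $\tree_S$, combined with the precomputation framework of Section \ref{sec:precomputation}: at the greatest common node $\tau$ of the types $\type_\p$ for $\p\in S$, decompose $S$ as the disjoint union of the intervals $S_{\tau'}$ for $\tau'$ a child of $\tau$, each of which admits precomputation by Lemma \ref{lem:st_precomputable} and is strictly shallower than $S$ in tree depth. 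Applying the inner inductive hypothesis on each $S_{\tau'}$ and reassembling via Lemma \ref{lem:precomputation2} transfers the $w_S$-maximality to the required $w_{\q_j}$-bound, with the tie-breaking rule at Step 3, together with the ordering condition \eqref{eq:ordering} of $S$, ensuring the correct alignment between the greedy choice of $j$ and the interval decomposition.
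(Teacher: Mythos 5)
The outer induction you set up does not actually carry the weight you assign to it, and you have correctly identified the fault line yourself: the pointwise inequality $\w[\q_j]{g_{\mathbbm{k}}}\le\w[\q_j]{g_{\ii_{k\ml}}}$ is not a consequence of the inductive hypothesis and is in fact false in general. The hypothesis gives only $\w[S]{g_{\mathbbm{k}}}\le\w[S]{g_{\ii_{k\ml}}}$, i.e.\ that \emph{some} coordinate valuation of $g_{\mathbbm{k}}$ lies at or below the common minimum; it says nothing about the particular coordinate $\q_j$ singled out by Step 3. One can easily have, e.g., $\wv{g_{\ii_{k\ml}}}=(5,5)$ with $j=1$ while $\wv{g_{\mathbbm{k}}}=(10,3)$: the global minimum drops, but $\w[\q_1]{g_{\mathbbm{k}}}=10>5$, so your chain of inequalities has nowhere to start. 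A greedy-step exchange argument of this one-dimensional form simply cannot work, and the paper's own discussion of ``Initial conditions'' in Section~3 (the $(1,2,0)$ vs.\ $(1,1,1)$ example) is a warning that stepwise maximality is a delicate property of MaxMin's specific strategy, not a generic invariant.

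There is also a smaller technical flaw in the exchange framing: $P_i=g_{j_{\q_i},\q_i}/g_{j_{\q_i}-\ml,\q_i}$ is in general a \emph{rational} function, not a monic polynomial of degree $\ml$, because an $\ml$-step can cross a level boundary (e.g.\ $\phi_{\ell+1,\q_i}/\phi_\ell^{e_\ell f_\ell-1}$). Valuation additivity still lets you talk about $\w[\q_j]{P_i}$, but the comparison $\w[\q_j]{P_i}\le\w[\q_j]{P_j}$ no longer reduces to a single application of Proposition~\ref{prop:explicit_valuations} and needs the monotonicity structure of Lemmas~\ref{lem:mostly_increasing_cross_values}--\ref{lem:greater_cross_value} applied carefully level by level. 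Finally, your proposed ``inner induction on tree depth plus precomputation'' does not close the gap: applying Lemma~\ref{lem:st_precomputable} and Lemma~\ref{lem:precomputation2} to the children of the common node again only yields $w_S$-maximality, not the $\w[\q_j]{\cdot}$-pointwise control you need. What actually makes the paper's proof go through is a two-interval decomposition $S=U\cup D$ (chosen by a four-way case analysis on the shape of $\tree_S$, reflecting the distinction between disconnected trees, refined versus unrefined $\lmin$-branches, and mixed slopes), reduction to a two-coordinate MaxMin, and a ``zig-zag'' invariant $\nu'_{j-1}\le\nu_i\le\nu'_j$ or $\nu_{i-1}\le\nu'_j<\nu_i$ on the shifted valuation sequences $\nu_i,\nu'_j$, whose monotonicity replaces the missing pointwise bound. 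That is the essential idea absent from your sketch.
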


Theorem \ref{thm:maxmin_works} follows from Theorem \ref{thm:maxmin_ml_works}. In fact, by Lemma \ref{lem:m_ell}, all output multi-indices of $\mm[S;\ml]$ will be maximal and by Lemma \ref{lem:maxmin_blocks} these multi-indices coincide with the output multi-indices of degree divisible by $\ml$ of $\mm[S]$.

Finally, Lemma \ref{lem:maxmin_blocks} shows how to derive all intermediary output multi-indices of $\mm[S]$ and Lemma \ref{lem:m_ell} shows that these multi-indices are maximal too.

\subsection{Branching cases}
\label{sec:proof_maxmin_ml_works}

The basic idea for the proof of Theorem \ref{thm:maxmin_ml_works} is to split $S = U \cup D$ ($U$ for ``up'' and $D$ for ``down'') into the disjoint union of two intervals which admit precomputation and then analyse the behaviour of $\mm[S=U \cup D]$ for which the multi-indices have only two coordinates.

Lemma \ref{lem:maxmin_blocks} shows that the output multi-indices of $\mm[S; \ml]$ coincide with the output of an ordinary application of the 2-dimensional $\mm$ applied to the precomputations $\mm[U; \ml]$ and $\mm[D; \ml]$. We shall denote this algorithm by $\mm[S = U \cup D; \ml]$.

We distinguish four cases according to the structure of the tree $\tree_{S}$:

\begin{alphcase} 
	\label{case:disconnected_tree}
	The tree $\tree_{S}$ is disconnected, composed of $t$ connected trees with root nodes $\psi_{0,1}, \dots, \psi_{0,t}$. We take $D$ to be the connected component of $\tree_{S}$ with root node $\psi_{0,t}$.
\end{alphcase}

For $\tree_{S}$ connected, the proof of Theorem \ref{thm:maxmin_ml_works} makes use of the structure of the non-optimised tree with base type $\type_{\ell-1}$ which is the greatest common node in all paths joining the leaves of $\tree_{S}$ with the root node.

Let $\phi_{\ell}$ be the first representative of $\type_{\ell-1}$ which leads to branching. Thus, before constructing $\phi_{\ell}$, the Montes algorithm may have constructed other representatives of $\type_{\ell-1}$ admitting unibranch refinements.

Let $\lmin$ be the least slope (in absolute size) occurring in the branching based on $\phi_{\ell}$. Let $\Smin \subseteq S$ be the subset of all prime ideals derived from branches of slope $\lmin$
of $\phi_{\ell}$.

\begin{alphcase} 
	\label{case:refined_lmin}
	There exists a branch $\type$ with slope $\lmin$ which suffered refinement. In this case, we take $D = S_{\type}$ to be the set of all prime ideals derived from this branch. Note that $\phi_{\ell,\p} \neq \phi_{\ell}$ $\forall\ \p \in D$, and that there may be other $\lmin$-branches.
	
	\begin{figure}[!h]
		\centering
		\begin{tikzpicture}[scale=0.6]
			\draw[anchor=east] (0, 0) node {$\psi_{0}$};
			\filldraw (0, 0) circle (2.5pt);
			\filldraw (2, 0) circle (2.5pt);
			\filldraw (4, 0) circle (2.5pt);
			\filldraw (6, 0) circle (2.5pt);
			\draw (0, 0) -- (2, 0);
			\draw (3, 0) node {$\cdots$};
			\draw (4, 0) -- (6, 0);
			\draw[anchor=south] (6, 0) node {$\type_{\ell-1}$};
			\draw (9, 0) node {$\cdots$};
			\draw[dotted] (6, 0) -- (8, 0);
			\draw[dotted] (10, 0) -- (13, 2);
			\draw[dotted] (10, 0) -- (13, -2);
			\draw[anchor=south] (10, 0) node {$\phi_{\ell}$};
			\draw[anchor=west] (11.5, 0.7) node {$\lambda, \psi'$};
			\draw[anchor=west] (11.5, -0.7) node {$\lmin, \psi$};
			\draw[black,fill=white] (8, 0) circle (2.5pt);
			\draw[black,fill=white] (10, 0) circle (2.5pt);
			\draw[black,fill=white] (13, 2) circle (2.5pt);
			\draw[black,fill=white] (13, -2) circle (2.5pt);
			\draw[anchor=west] (13.2, -2) node {$\type$};
		\end{tikzpicture}
		\caption{Case \ref{case:refined_lmin}. Tree $\treenop_{S}$ with at least one refined $\lmin$-branch.}
	\end{figure}
\end{alphcase}

\begin{alphcase} 
	\label{case:all_lmin}
	None of the $\lmin$-branches suffered refinement, and there are no other slopes. In other words, $\lambda_{\ell,\p} = \lmin$ and $\phi_{\ell,\p} = \phi_{\ell}$ for all $\p \in S$.
	
	In this case, we take $D = S_{\type}$, for any choice of a $\lmin$-branch $\type$.
	
	\begin{figure}[!h]
		\centering
		\begin{tikzpicture}[scale=0.6]
			\draw[anchor=east] (0, 0) node {$\psi_{0}$};
			\filldraw (0, 0) circle (2.5pt);
			\filldraw (2, 0) circle (2.5pt);
			\filldraw (4, 0) circle (2.5pt);
			\filldraw (6, 0) circle (2.5pt);
			\draw (0, 0) -- (2, 0);
			\draw (3, 0) node {$\cdots$};
			\draw (4, 0) -- (6, 0);
			\draw[anchor=south] (6, 0.3) node {$\type_{\ell-1}$};
			\filldraw (8.5, 2) circle (2.5pt);
			\filldraw (8.5, 0) circle (2.5pt);
			\filldraw (8.5, -2) circle (2.5pt);
			\draw (6, 0) -- (8.5, 2);
			\draw (6, 0) -- (8.5, 0);
			\draw (6, 0) -- (8.5, -2);
			\draw[anchor=west] (8.5, 2) node {$(\type_{\ell-1}; (\phi_{\ell}, \lmin, \psi'))$};
			\draw[anchor=west] (8.5, 0) node {$(\type_{\ell-1}; (\phi_{\ell}, \lmin, \psi^{*}))$};
			\draw[anchor=west] (8.5, -2) node {$(\type_{\ell-1}; (\phi_{\ell}, \lmin, \psi)) = \type$};
		\end{tikzpicture}
		\caption{Case \ref{case:all_lmin}. Tree $\tree_{S}$ with only unrefined $\lmin$-branches.}
	\end{figure}
\end{alphcase}

\begin{alphcase} 
	\label{case:unrefined_lmin}
	None of the $\lmin$-branches suffered refinement, but there are other slopes. In other words, $\lambda_{\ell,\p} = \lmin$ and $\phi_{\ell,\p} = \phi_{\ell}$, for all $\p \in \Smin \subsetneq S$.

	In this case, we take $D = \Smin$.
	
	\begin{figure}[!h]
		\centering
		\begin{tikzpicture}[scale=0.6]
			\draw[anchor=east] (0, 0) node {$\psi_{0}$};
			\filldraw (0, 0) circle (2.5pt);
			\filldraw (2, 0) circle (2.5pt);
			\filldraw (4, 0) circle (2.5pt);
			\filldraw (6, 0) circle (2.5pt);
			\draw (0, 0) -- (2, 0);
			\draw (3, 0) node {$\cdots$};
			\draw (4, 0) -- (6, 0);
			\draw[anchor=south] (6, 0.3) node {$\type_{\ell-1}$};
			\filldraw (8.5, 2.5) circle (2.5pt);
			\filldraw (8.5, 0.5) circle (2.5pt);
			\filldraw (8.5, -0.5) circle (2.5pt);
			\filldraw (8.5, -2.5) circle (2.5pt);
			\draw (6, 0) -- (8.5, 2.5);
			\draw (6, 0) -- (8.5, 0.5);
			\draw (6, 0) -- (8.5, -0.5);
			\draw (6, 0) -- (8.5, -2.5);
			\draw (8.5, 1.65) node {$\vdots$};
			\draw (8.5, -1.25) node {$\vdots$};
			\draw[anchor=west] (8.5, 2.5) node {$(\type_{\ell-1}; (\phi''_{\ell}, \lambda'', \psi''))$};
			\draw[anchor=west] (8.5, 0.5) node {$(\type_{\ell-1}; (\phi'_{\ell}, \lambda', \psi'))$};
			\draw[anchor=west] (8.5, -0.5) node {$(\type_{\ell-1}; (\phi_{\ell}, \lmin, \psi^{*}))$};
			\draw[anchor=west] (8.5, -2.5) node {$(\type_{\ell-1}; (\phi_{\ell}, \lmin, \psi))$};
			\draw [decorate,decoration={brace,amplitude=6pt}] (14.5, -0.1) -- (14.5, -3);
			\draw[anchor=west] (15, -1.5) node {$\lmin$-branches};
		\end{tikzpicture}
		\caption{Case \ref{case:unrefined_lmin}. Tree $\tree_{S}$ with with unrefined $\lmin$-branches and other slopes.}
	\end{figure}
\end{alphcase}

In all cases, we may change the ordering of $\pp$ so that $D$ and $U = S \setminus D$ are intervals.

\subsubsection{Proof of Theorem \ref{thm:maxmin_ml_works} in cases \ref{case:disconnected_tree}, \ref{case:refined_lmin} and \ref{case:all_lmin}}

Denote
\begin{align*}
	c := \begin{cases}
		\dfrac{V_{\ell} + \lmin}{\eto{\ell-1}}, 	& \text{ if } \tree_{S} \text{ is connected}, \\
		0,							&\text{ if } \tree_{S} \text{ is disconnected}.
	\end{cases}
\end{align*}
The explicit formulas from Proposition \ref{prop:explicit_valuations} show that  
\begin{align}
	\label{eq:pvalq_eq_qvalp} 
	\w[\p]{\phi_{m,\q}} &= (m/\ml) c = \w[\q]{\phi_{m,\p}},		& \forall\  \p \in U,\  \q \in D,\  \forall\  m \ge \ell.
\end{align}

On the other hand, all ideas and criteria about precomputation apply to the $\mm$ algorithms restricted to all multi-indices whose coordinates are divisible by $\ml$. Hence, \eqref{eq:pvalq_eq_qvalp} shows that $D$ and $U = S \setminus D$ meet the condition of Lemma \ref{lem:precomputation} and both intervals admit precomputation.

Denote the respective output families of numerators of $\mm[U;\ml]$ and $\mm[D;\ml]$ by:
\begin{align*}
	&1, g_{1}, \dots, g_{n_{U}/\ml}; \qquad
	1, g'_{1}, \dots, g'_{n_{D}/\ml}.
\end{align*}
Note that $\deg g_{k} = k\ml$, for all $k \le n_{U}/\ml$ and $\deg g'_{k} = k\ml$, for all $k \le n_{D}/\ml$.

By Lemma \ref{lem:precomputation2}, $\mm[S;\ml]$ has the same output as $\mm[S=U \cup D;\ml]$, after natural identifications of the respective multi-indices. In other words, if $(i,j)$ is the $k$-th output of $\mm[S=U \cup D;\ml]$ (so that $k = i + j$), then the $k$-th numerator provided by $\mm[S;\ml]$ is $g_{i} g'_{j}$.

\begin{definition}
	We say that a monic polynomial $G \in \oox$ \emph{has support} in a subset $S' \subset S$ if it is a product of polynomials $\phi_{m,\p}$ for $\p \in S'$ and $m \ge \ell$.
	
	Note that the degree of $G$ is necessarily a multiple of $\ml$.
\end{definition}

In order to prove Theorem \ref{thm:maxmin_ml_works}, we must show that the output numerators of $\mm[S; \ml]$ are maximal amongst all polynomials of the same degree with support in $S$.
 
We proceed by induction on $\#S$. The case $\#S=1$ being trivial, we may assume by the induction hypothesis that both sequences of numerators are maximal amongst all polynomials of the same degree with support in $U$ and $D$, respectively.

For all $0\le i\le n_U/\ml$ and all $0\le j\le n_D/\ml$, denote 
\begin{align}\label{eq:def_nu}
	\nu_{i} &:= \w[U]{g_{i}} - ic, \qquad
	\nu'_{j} := \w[D]{g'_{j}} - jc.
\end{align}

We agree that $\nu_{-1} = \nu'_{-1} = -1$.

\begin{lemma}
	\label{lem:increasing} 
	For all $i, j \geq 0$,
	\begin{align*}
		\nu_{i} &\leq \nu_{i+1}, \qquad
		\nu'_{j} \leq \nu'_{j+1}.
	\end{align*}
\end{lemma}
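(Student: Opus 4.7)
By symmetry it suffices to prove $\nu_i \leq \nu_{i+1}$; the argument for $\nu'_j \leq \nu'_{j+1}$ is obtained by interchanging the roles of $U$ and $D$. Unwinding \eqref{eq:def_nu}, the target is the inequality $w_U(g_{i+1}) \geq w_U(g_i) + c$. By construction of $\mm[U;\ml]$, the algorithm selects a prime $\q_j \in U$ with $w_{\q_j}(g_i) = w_U(g_i)$ and outputs $g_{i+1} = g_i \cdot h$, where $h := g_{(k+1)\ml,\q_j}/g_{k\ml,\q_j}$ and $k\ml$ is the current $\q_j$-coordinate of $\ii_{i\ml}$. Since the $\p$-adic valuations are additive, $w_\p(g_{i+1}) = w_\p(g_i) + w_\p(h)$ for every $\p \in U$, so it is enough to prove the key bound
\[
w_\p(h) \geq c, \qquad \forall\ \p \in U,
\]
for then $w_\p(g_{i+1}) \geq w_U(g_i) + c$ for each $\p \in U$, and taking the minimum finishes the proof.

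To handle $h$, I would write $k$ in its mixed-radix representation with bases $(e_\ell f_\ell, e_{\ell+1} f_{\ell+1}, \dots)$ inherited from \eqref{eq:es_fs_ms}. Then $g_{k\ml,\q_j} = \prod_{m \geq \ell} \phi_{m,\q_j}^{b_m}$ with $0 \leq b_m < e_m f_m$, and incrementing $k$ by one triggers a carry up to some level $m_0 \geq \ell$, yielding
\[
h = \phi_{m_0,\q_j} \cdot \prod_{m=\ell}^{m_0-1} \phi_{m,\q_j}^{-(e_m f_m - 1)}.
\]
Using the recursion $w_\p(\phi_{m+1,\q_j}) = e_m f_m\, w_\p(\phi_{m,\q_j}) + \delta_m$, which is immediate from Proposition \ref{prop:explicit_valuations}, together with the telescoping identity $\sum_{m=\ell}^{m_0-1}(e_m f_m - 1) \prod_{m'=\ell}^{m-1} e_{m'} f_{m'} = \prod_{m=\ell}^{m_0-1} e_m f_m - 1$, a direct computation collapses $w_\p(h)$ into
\[
w_\p(h) = w_\p(\phi_{\ell,\q_j}) + \sum_{m=\ell}^{m_0-1} \delta_m.
\]
The key bound therefore reduces to showing $w_\p(\phi_{\ell,\q_j}) + \sum_m \delta_m \geq c$ for every $\p \in U$.

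This final estimate is obtained through a case analysis on $i(\p,\q_j)$ via Proposition \ref{prop:explicit_valuations}. When $i(\p,\q_j) > \ell$, the level-$\ell$ data of $\p$ and $\q_j$ coincide, so $w_\p(\phi_{\ell,\q_j}) = (V_\ell + \lambda_{\ell,\q_j})/\eto{\ell-1} \geq c$ because $\lambda_{\ell,\q_j} \geq \lmin$ by definition of $\lmin$; the $\delta_m$'s below $i(\p,\q_j)$ are strictly positive and those above vanish, so the sum is non-negative. When $i(\p,\q_j) = \ell$, the formula for $w_\p(\phi_{\ell,\q_j})$ involves $\clam{\p}{\q_j}$ or $\minclam{\p}{\q_j}$, and the inequality reduces to a lower bound on these hidden slopes; this bound is secured by the selection of $D$ in the four cases of Section \ref{sec:proof_maxmin_ml_works}, since $D$ is precisely the collection of branches that could drive the hidden slopes below $\lmin$, leaving in $U$ only pairs whose hidden slopes remain at least $\lmin$.

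The principal technical difficulty lies in the sub-case $i(\p,\q_j) = \ell$ with $\phi_{\ell,\q_j} = \phi(\p,\q_j)$ and $\clam{\p}{\q_j} < \clam{\q_j}{\p}$, where the naive estimate $\clam{\p}{\q_j} \geq \lmin$ need not hold and the correction term $\delta_\ell$ can even be negative. Overcoming this requires exploiting the positive contributions of the $\delta_m$'s accumulated below level $i(\p,\q_j)$, together with the refinement structure of the non-optimised tree from Section \ref{sec:nonoptimised_tree} and the explicit definition of $U = S \setminus D$ in Cases \ref{case:refined_lmin}--\ref{case:unrefined_lmin}, to absorb any deficit.
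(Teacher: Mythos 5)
The paper's proof is a two-line argument that does \emph{not} unwind the $\mm[U;\ml]$ step at all: it invokes the induction hypothesis on $\#S$, under which $g_{i+1}$ is known to be maximal among polynomials of degree $(i+1)\ml$ with support in $U$, and simply compares $g_{i+1}$ with the competitor $g_{i}\phi_{\ell,\q}$. The whole content is then the single inequality $\w[\p]{\phi_{\ell,\q}} \ge c$ for $\p,\q \in U$, which follows from $\lambda_{\ell,\q}\ge\lmin$ together with Proposition~\ref{prop:explicit_valuations}. Your approach is genuinely different: you bypass maximality, write $g_{i+1}=g_i\,h$ with $h=g_{(k+1)\ml,\q_j}/g_{k\ml,\q_j}$ a mixed-radix carry, telescope the recursion $\w[\p]{\phi_{m+1,\q_j}}=e_mf_m\,\w[\p]{\phi_{m,\q_j}}+\delta_m$, and try to bound $\w[\p]{h}=\w[\p]{\phi_{\ell,\q_j}}+\sum_m\delta_m$ from below by $c$. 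The carry algebra and the telescoping identity are correct, but the route is much longer, and it forgoes the one hypothesis (maximality of $g_{i+1}$) that the paper's setup hands you for free precisely so that this lemma becomes trivial.

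More seriously, your proof is not complete. You correctly observe that the estimate can fail at the level $m=i(\p,\q_j)$, where $\delta_m$ may be negative, but you conclude only that resolving this ``requires exploiting the positive contributions of the $\delta_m$'s accumulated below level $i(\p,\q_j)$, together with the refinement structure\dots''\,---\,that is an assertion, not an argument, and it is exactly the gap that would have to be closed. In fact, a careful computation shows $\delta_m=0$ for $m>i(\p,\q_j)$ and, at $m=i(\p,\q_j)$ with $\phi_{m,\q_j}=\phi(\p,\q_j)$, one gets $\delta_m=e_mf_m\bigl(\minclam{\p}{\q_j}-\clam{\p}{\q_j}\bigr)/(\eto{m-1})$, which is negative precisely when $\clam{\p}{\q_j}>\clam{\q_j}{\p}$ (you have the inequality in the problematic sub-case reversed); in that situation the accumulated positive terms below the branching level are not obviously large enough to absorb the deficit, and establishing that they are requires exactly the case-by-case analysis of Cases~\ref{case:disconnected_tree}--\ref{case:all_lmin} that your proof is supposed to supply. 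The paper sidesteps all of this: because it compares with the fixed degree-$(i+1)\ml$ competitor $g_i\phi_{\ell,\q}$ rather than the actual carry $h$, the only hidden-slope inequality it needs is the crude $\w[\p]{\phi_{\ell,\q}}\ge c$, which does not involve the delicate cancellation your telescoping introduces. In short, your decomposition of $h$ is a workable starting point, but without invoking the induction hypothesis (or otherwise closing the negative-$\delta$ sub-case) the proof does not go through, whereas the intended argument is one application of maximality.
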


\begin{proof}
	By Proposition \ref{prop:explicit_valuations}, $\wt[\q]{\phi_{\ell,\q}} = (V_\ell + \lambda_{\ell,\q})/(\eto{\ell-1})$ for all $\q \in U$. Since $\lambda_{\ell,\q} \ge \lmin$, the maximality of $g_{i+1}$ implies
	\begin{align*}
		\w[U]{g_{i+1}} &\geq \w[U]{g_{i} \phi_{\ell,\q}} \geq \w[U]{g_{i}} + c,	& \forall\  \q \in U.
	\end{align*}
	Similarly, the maximality of $g'_{j+1}$ implies
	\begin{align*}
		\w[D]{g'_{j+1}} \geq \w[D]{g'_{j} \phi_{\ell}} = \w[D]{g'_{j}} + c.
	\end{align*}
	By the definition \eqref{eq:def_nu} of $\nu_{i}$, $\nu'_{j}$, this ends the proof of the lemma.
\end{proof}

For any bi-index $\ii = (i,j)$, and any $\p \in U$, $\q \in D$, \eqref{eq:pvalq_eq_qvalp} shows that
%
%
\begin{align*}
	\w[U]{g_{\ii}} &= \w[U]{g_{i}} + jc = \nu_{i} + (i+j) c = \nu_{i} + (\deg \ii) c, \\
	\w[D]{g_{\ii}} &= \w[D]{g'_{j}} + ic = \nu'_{j} + (i+j) c = \nu'_{j} + (\deg \ii) c, \\
	\w[S]{g_{\ii}} &= \w[S]{g_{i} g'_{j}} = \Min\set{\nu_{i}, \nu'_{j}} + (\deg\ii) c.
\end{align*}

Therefore, these numbers $\nu_{i}$, $\nu'_{j}$ determine the flow of $\mm[S=U\cup D;\ml]$. If $(i,j)$ is an output pair, the next output pair is decided as follows:
\begin{align*}
	\w[U]{g_{i} g'_{j}} &= \w[S]{g_{i} g'_{j}} \sii \nu_{i} \leq \nu'_{j},	& \text{``$U$-minimal''}, \\
	\w[D]{g_{i} g'_{j}} &= \w[S]{g_{i} g'_{j}} \sii \nu'_{j} < \nu_i,	& \text{``$D$-minimal''}.
\end{align*}

The next output pair is $(i+1,j)$ in the $U$-minimal case, and $(i,j+1)$ in the $D$-minimal case. 

\begin{proposition}
	\label{prop:separated_maximal} 
	The output bi-indices $(i,j)$ of $\mm[S=U\cup D;\ml]$ satisfy the following properties:
	\begin{enumerate}
		\item Either $\nu'_{j-1} \leq \nu_{i} \leq \nu'_{j}$, or $\nu_{i-1} \leq \nu'_{j} < \nu_{i}$.
		\item The output multi-indices of $\mm[S = U \cup D; \ml]$ are maximal amongst all polynomials of the same degree with suppose in $S$.
	\end{enumerate}
\end{proposition}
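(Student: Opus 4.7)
The plan is to prove part (1) by induction on the step index $k$, and then deduce part (2) by induction on $\#S$, leveraging (1) in the inductive step.

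For part (1), the base case $k = 0$ gives $(i,j) = (0,0)$ with $\nu_{0} = \nu'_{0} = 0$, so the convention $\nu'_{-1} = -1$ places it in the first alternative. Suppose inductively that an output $(i,j)$ satisfies the first alternative $\nu'_{j-1} \le \nu_{i} \le \nu'_{j}$; then the algorithm is $U$-minimal, so the next output is $(i+1,j)$. By Lemma \ref{lem:increasing}, either $\nu_{i+1} \le \nu'_{j}$, in which case $\nu'_{j-1} \le \nu_{i} \le \nu_{i+1} \le \nu'_{j}$ keeps $(i+1,j)$ in the first alternative, or $\nu_{i+1} > \nu'_{j}$, in which case $\nu_{i} \le \nu'_{j} < \nu_{i+1}$ puts $(i+1,j)$ in the second. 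The case of an output in the second alternative is entirely symmetric, producing $(i,j+1)$ in either the first or second alternative depending on whether $\nu'_{j+1} \ge \nu_{i}$ or not.

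For part (2), I induct on $\#S$. The case $\#S = 1$ is immediate. For the inductive step, each of the splittings $S = U \cup D$ from cases \ref{case:disconnected_tree}--\ref{case:all_lmin} has $\#U, \#D < \#S$, so by induction the families $(g_{i})$ and $(g'_{j})$ are maximal amongst polynomials of the respective degrees with support in $U$ and $D$. Any monic $G \in \oox$ of degree $k\ml$ with support in $S$ factors as $G = G_{U} G_{D}$, where $G_{U}, G_{D}$ have support in $U, D$ and degrees $i'\ml, j'\ml$ with $i' + j' = k$. The identity \eqref{eq:pvalq_eq_qvalp} gives $\w[U]{G_{D}} = j'c$ and $\w[D]{G_{U}} = i'c$, so additivity of valuations and the inductive maximality yield
\[
	\w[S]{G} = \Min\set{\w[U]{G_{U}} + j'c,\  \w[D]{G_{D}} + i'c} \le \Min\set{\nu_{i'}, \nu'_{j'}} + kc.
\]
It remains to show $\Min\set{\nu_{i'}, \nu'_{j'}} \le \Min\set{\nu_{i}, \nu'_{j}}$ whenever $i' + j' = i + j$. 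In the first alternative, $\Min\set{\nu_{i}, \nu'_{j}} = \nu_{i}$: if $i' < i$ then monotonicity yields $\nu_{i'} \le \nu_{i}$, and if $i' > i$ then $j' \le j - 1$ and $\nu'_{j'} \le \nu'_{j-1} \le \nu_{i}$ by (1) and monotonicity. The second alternative is handled symmetrically.

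The delicate step is the decoupling afforded by \eqref{eq:pvalq_eq_qvalp}: it is precisely what reduces $\w[S]{G}$ to a comparison between the one-dimensional sequences $(\nu_{i'})$ and $(\nu'_{j'})$, so that the combinatorial structure of part (1) translates into the maximality statement of part (2). Verifying that the chosen splitting $S = U \cup D$ in each of cases \ref{case:disconnected_tree}, \ref{case:refined_lmin}, \ref{case:all_lmin} actually satisfies \eqref{eq:pvalq_eq_qvalp}, and admits precomputation via Lemma \ref{lem:precomputation}, is where the Newton-polygon and slope data of Section \ref{sec:proof_opt_polys} genuinely intervene.
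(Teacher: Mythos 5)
Your proof is correct and follows essentially the same route as the paper: the induction step for part (1) is identical, the key inequality $\Min\{\nu_{i'},\nu'_{j'}\}\le\Min\{\nu_i,\nu'_j\}$ for $i'+j'=i+j$ is exactly the paper's display \eqref{eq:min_leq_min} proved by the same dichotomy, and the decoupling via \eqref{eq:pvalq_eq_qvalp} together with the induction on $\#S$ matches the paper's argument word for word.
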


\begin{proof}
	Clearly, the initial output pair $(0,0)$ satisfies (1). Let us check that if an output pair $(i,j)$ satisfies (1), then the next output pair satisfies (1) as well.
	
	Suppose that $\nu'_{j-1} \leq \nu_{i} \leq \nu'_{j}$, so that the next output pair is $(i+1,j)$.
	\begin{align*}
		\nu_{i+1} \leq \nu'_{j} &\implies \nu'_{j-1} \leq \nu_{i} \le \nu_{i+1} \leq \nu'_{j}, \\
		\nu_{i+1} > \nu'_{j} &\implies \nu_{i} \leq \nu'_{j} < \nu_{i+1}.
	\end{align*}
	
	Suppose that $\nu_{i-1}\le\nu'_j<\nu_i$, so that the next output pair is $(i,j+1)$.
	\begin{align*}
		\nu_{i} \leq \nu'_{j+1} &\implies \nu'_{j} < \nu_{i} \leq \nu'_{j+1}, \\
		\nu_{i} > \nu'_{j+1} &\implies \nu_{i-1} \leq \nu'_{j} \leq \nu'_{j+1} < \nu_{i}.
	\end{align*}
	
	This proves item (1). As a consequence, for any $k \in \Z$ such that $0 \leq i-k \leq n_{U}/\ml$ and $0 \leq j+k \leq n_{D}/\ml$, we have:
	\begin{align}
		\label{eq:min_leq_min}
		\Min\set{\nu_{i-k}, \nu'_{j+k}} \leq \Min\set{\nu_{i}, \nu'_{j}}.
	\end{align}
	
	In fact, if $\nu'_{j-1} \leq \nu_{i} \leq \nu'_{j}$, then  $\Min\set{\nu_{i-k}, \nu'_{j+k}} \leq \nu_{i}$, whereas in the case  $\nu_{i-1} \leq \nu'_{j} < \nu_{i}$, we have $\Min\set{\nu_{i-k}, \nu'_{j+k}} \leq \nu'_{j}$.
	
	In order to prove (2), suppose that $(i,j)$ is an output pair of $\mm[S=U\cup D;\ml]$ and let $g$ be a polynomial of degree $(i+j)\ml$ with support in $S$. We may write $g = GG'$, with $G$, $G'$ polynomials with support in $U$ and $D$, respectively.

	Suppose $\deg G=(i-k)\ml$, $\deg G'=(j+k)\ml$, for certain $k\in\Z$. By \eqref{eq:pvalq_eq_qvalp} and the maximality of the numerators $g_{i-k}$, $g'_{j+k}$, we have:
	\begin{align*}
		 \w[U]{g} &= \w[U]{G G'} = \w[U]{G} + (j+k) c \\
			&\le \nu_{i-k} + (i-k) c + (j+k) c = \nu_{i-k} + (i+j) c,
	\end{align*}
	\begin{align*}
		\w[D]{g} &= \w[D]{G G'} = \w[D]{G'} + (i-k) c \\
			&\le \nu'_{j+k} + (j+k) c + (i-k) c = \nu'_{j+k} + (i+j) c.
	\end{align*}
	
	Hence, by using \eqref{eq:min_leq_min}, we get:
	\begin{align*}
		\w[S]{g} &= \Min\set{\w[U]{g}, \w[D]{g}} = \Min\set{\nu_{i-k}, \nu'_{j+k}} + (i+j) c \\
			&\le \Min\set{\nu_{i}, \nu'_{j}} + (i+j) c = \w[S]{g_{i} g'_{j}}.
			\qedhere
	\end{align*}
\end{proof}

This ends the proof of Theorem \ref{thm:maxmin_ml_works} in cases \ref{case:disconnected_tree}, \ref{case:refined_lmin} and \ref{case:all_lmin}.

\subsubsection{Precomputation in Case \ref{case:unrefined_lmin}}

Recall that $D = \Smin$ and $U = S \setminus D$. In this case, we have:
\begin{align*}
	\phi_{\ell,\q} &= \phi_{\ell} = \phi(\p,\q),		\qquad \forall\  \p \in U, \q \in D.
\end{align*}

For each $\p \in S$ we denote by $\lambda_{\p}$ the slope of the branch of $\phi_{\ell}$ in the non-optimised tree to which the leaf of $\p$ belongs. Also, we denote
\begin{align*}
	c := \frac{V_{\ell} + \lmin}{\eto{\ell-1}}, \qquad
	\delta_{\p} := \frac{\lambda_{\p} - \lmin}{\eto{\ell-1}} \ge 0.
\end{align*}

The explicit formulas presented in Proposition \ref{prop:explicit_valuations} show that for all $\p \in U, \q \in D$:
\begin{align}
	\label{eq:pqval_phi} 
	\begin{aligned}
		\w[\q]{\phi_{i,\p}} &= (m_{i}/\ml) c,	& \forall\  i \ge \ell, \\
		\w[\p]{\phi_{i,\q}} &=
		\begin{cases}
			(m_{i}/\ml) c,		& \text{if } i > \ell,\\
			\delta_{\p} + c,	& \text{if } i = \ell.
		\end{cases}
	\end{aligned}
\end{align}

Let $G$ be a polynomial of degree $i \ml$ with support in $U$, and let $G'$ be a polynomial of degree $j \ml$ with support in $D$. If $m := \ord_{\phi_{\ell}}(G')$, the formulas \eqref{eq:pqval_phi} show that:
\begin{align}
	\label{eq:udval_gg} 
	\begin{aligned}
		\w[U]{G G'} &= \Min\set{\w[\p]{G} + m \delta_{\p}}_{\p \in U} + jc, \\
		\w[D]{G G'} &= \w[D]{G'} + ic.
	\end{aligned}
\end{align}

The first formula of \eqref{eq:pqval_phi} shows that $D$ meets the criterion of Lemma \ref{lem:precomputation} and admits precomputation. In order to show that $U$ admits precomputation too, we need another lemma.\medskip

\begin{notation}
	For each $\p \in D$, we denote $m_{\p} := m_{\ell+1,\p} = e_{\ell,\p} f_{\ell,\p} \ml$.
\end{notation}


\begin{lemma}
	\label{lem:bb} 
	Let $\ii = (i_\p)_{\p \in S}$ be an output of $\mm[S;\ml]$ and $g = g_{\ii}$ the corresponding numerator. Let $\p \in S$ be the least prime with $\w[\p]{g} = \w[S]{g}$.
	\begin{enumerate}
		\item If $\p \in D$ and $m_{\p} \divides i_{\p}$, then the next $e_{\ell,\p}f_{\ell,\p}$ output numerators are
		\begin{align*}
			g \phi_{\ell},\  g \phi_{\ell}^{2},\  \dots,\  g \phi_{\ell}^{e_{\ell,\p} f_{\ell,\p}-1},
		\end{align*}
		and finally
		\begin{align*}
			\left( \prod_{\q \neq \p} g_{i_{\q},\q} \right) \cdot g_{i_{\p} + m_{\p},\p}.
		\end{align*}	
				
		\item If $\p \in U$, then $m_{\q} \divides i_{\q}$ for all $\q \in D$.
	\end{enumerate}
\end{lemma}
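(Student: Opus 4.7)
The plan is to prove part~(1) by a direct analysis of how the Okutsu numerator $g_{i_\p,\p}$ evolves under repeated increments of $i_\p$ by $\ml$, combined with the explicit valuation shifts recorded in~\eqref{eq:pqval_phi}; once (1) is in hand, part~(2) will follow by induction on the iteration count, the point being that (1) forces the algorithm to process $D$-primes in uninterrupted blocks of length $e_{\ell,\p}f_{\ell,\p}$.

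For part~(1), I would first unpack the hypothesis $m_\p\mid i_\p$. Writing $i_\p=\sum_{j=0}^{r_\p}a_j\,m_{j,\p}$ in the canonical form used to define $g_{i_\p,\p}$ in Section~\ref{sec:okutsu_bases}, divisibility by $m_\p=m_{\ell+1,\p}=e_{\ell,\p}f_{\ell,\p}\,\ml$ forces $a_0=a_1=\cdots=a_\ell=0$. Since $\phi_{\ell,\p}=\phi_\ell$ for $\p\in D$, incrementing $i_\p$ by $\ml$ simply raises $a_\ell$ by one without carry as long as $a_\ell<e_{\ell,\p}f_{\ell,\p}$, giving the identities $g_{i_\p+k\ml,\p}=g_{i_\p,\p}\,\phi_\ell^{\,k}$ for $0\le k<e_{\ell,\p}f_{\ell,\p}$, and $g_{i_\p+m_\p,\p}=g_{i_\p,\p}\,\phi_{\ell+1,\p}$ after the final carry. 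Using~\eqref{eq:pqval_phi}, multiplication by $\phi_\ell$ shifts the valuation at each $\q\in D$ by exactly $c$ and at each $\q\in U$ by $\delta_\q+c\ge c$. Consequently, at every intermediate state $g\phi_\ell^{\,k}$ with $0\le k\le e_{\ell,\p}f_{\ell,\p}-1$, the difference $\w[\q]{g\phi_\ell^{\,k}}-\w[\p]{g\phi_\ell^{\,k}}$ is at least $\w[\q]{g}-\w[\p]{g}\ge 0$, with strict sign whenever the latter is strict; in particular $\p$ remains the least prime achieving the $S$-minimum throughout the block. The algorithm therefore picks $\p$ at each of these $e_{\ell,\p}f_{\ell,\p}$ iterations, and the final iteration, which triggers the Okutsu carry, produces the numerator $g\phi_{\ell+1,\p}=\bigl(\prod_{\q\ne\p}g_{i_\q,\q}\bigr)g_{i_\p+m_\p,\p}$ as required.

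For part~(2), I would induct on the iteration number, carrying the invariant that in the current multi-index $m_\q\mid i_\q$ for every $\q\in D$. The initial state $\ii_0$ is trivially invariant. Whenever the selected prime lies in $U$, the $D$-coordinates do not change, so the invariant (which is exactly the conclusion of~(2)) is preserved. Whenever the selected prime $\p$ lies in $D$, the hypothesis of~(1) is automatically satisfied by the invariant, and (1) guarantees that the next $e_{\ell,\p}f_{\ell,\p}$ iterations all pick $\p$ and increase $i_\p$ by exactly $m_\p$, restoring the invariant at the end of the block. Thus the invariant holds at every moment at which the algorithm is about to choose a $U$-prime, which is precisely the content of~(2).

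The main obstacle is the stability step inside~(1), namely ruling out that some prime $\q$ of smaller index than $\p$ overtakes $\p$ as the least minimum-achiever at an intermediate state $g\phi_\ell^{\,k}$. The uniform lower bound $\w[\q]{\phi_\ell}\ge c$, together with equality precisely on $D$, is the quantitative ingredient that the structure of Case~\ref{case:unrefined_lmin} and the formulas~\eqref{eq:pqval_phi} provide; once this monotone behaviour of the valuations under multiplication by $\phi_\ell$ is articulated, the remainder of the argument for both~(1) and~(2) is essentially bookkeeping.
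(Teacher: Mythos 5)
Your proof is correct and coincides with the paper's argument: both use the formulas \eqref{eq:pqval_phi} to show that multiplication by $\phi_\ell$ raises every $w_\q$-value by at least $c$ (with equality precisely on $D$), so $\p$ remains the least minimum-achiever throughout the block, and both obtain part~(2) by the same block-wise induction on iterations. The only small imprecision is the asserted identity $g_{i_\p+m_\p,\p}=g_{i_\p,\p}\,\phi_{\ell+1,\p}$, which can fail when the $(\ell+1)$-th Okutsu digit of $i_\p$ is already maximal and a further carry occurs, but this plays no role since the lemma's conclusion only names the numerator $g_{i_\p+m_\p,\p}$ itself.
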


\begin{proof}
	Suppose $\p \in D$ and $m_{\p} \divides i_{\p}$. Since the element $g_{i_{\p},\p}$, a numerator of the Okutsu $\p$-basis, has degree divisible by $m_{\p}$, it is not divisible by $\phi_{\ell}$, and $g_{i_{\p}+1,\p} = g_{i_{\p},\p} \phi_{\ell}$. Hence, the next output numerator is $g \phi_{\ell}$. 
	
	By \eqref{eq:pqval_phi}, $\w[\p]{g \phi_{\ell}} = \w[\p]{g} + c$, while $\w[\q]{g \phi_{\ell}} \ge \w[\q]{g} + c$ for all $\q \in S$. Thus, the least prime with $\w[\q]{g \phi_{\ell}} = \w[S]{g \phi_{\ell}}$ is, once again, the prime $\p$.
	
	This argument may be iterated as long as $\ord_{\phi_{\ell}}(g_{i_{\p} + k \ml,\p}) = k < e_{\ell,\p} f_{\ell,\p}$. For $k = e_{\ell,\p} f_{\ell,\p} - 1$, the prime $\p$ is still the least one satisfying $\w[\p]{g \phi_{\ell}^{k}} = \w[S]{g \phi_{\ell}^{k}}$, so that the component of the multi-index corresponding to $\p$ is increased and the output multi-index is $\ii + m_{\p} \uu_{\p}$.
		
	The second item follows immediately from the first. 
\end{proof}

\begin{corollary}
	$U$ admits precomputation.
\end{corollary}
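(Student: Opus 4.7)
The plan is to verify the hypothesis of Lemma \ref{lem:precomputation} for $I = U$: at every output multi-index $\ii_k = (i_\q)_{\q \in S}$ of $\mm[S;\ml]$ satisfying $\w[U]{g_{\ii_k}} = \w[S]{g_{\ii_k}}$, the complementary product $G_{\ii_k} := \prod_{\q \in D} g_{i_\q, \q}$ should have a $\w[\p]$-valuation that does not depend on the choice of $\p \in U$.

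First I would fix an ordering on $\pp$ placing $U$ before $D$ (both still intervals, which \eqref{eq:ordering} permits). Under this convention the assumption $\w[U]{g_{\ii_k}} = \w[S]{g_{\ii_k}}$ forces the least prime in $S$ attaining the minimum $w$-value to lie in $U$, which puts us exactly in the hypothesis of Lemma \ref{lem:bb}(2). That lemma then delivers the crucial divisibility $m_\q \mid i_\q$ for every $\q \in D$.

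The rest is unpacking. For $\q \in D$ with $m_\q \mid i_\q$, writing $i_\q = \sum_{j=0}^{r_\q} a_j m_{j,\q}$ with $0 \le a_j < e_{j,\q} f_{j,\q}$ forces $a_0 = \cdots = a_\ell = 0$, since the partial sum below level $\ell+1$ is strictly less than $m_{\ell+1,\q} = m_\q$. Hence $g_{i_\q, \q}$ is a product of $\phi_{j,\q}$ with $j > \ell$ only; in particular it carries no factor of $\phi_\ell$. Applying the first formula in \eqref{eq:pqval_phi} to each such factor gives $\w[\p]{\phi_{j,\q}} = (m_{j,\q}/\ml)\, c$, a value independent of $\p \in U$. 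Summing over $\q \in D$ yields the required uniformity of $\w[\p]{G_{\ii_k}}$ on $U$, and Lemma \ref{lem:precomputation} concludes.

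The real work has already been absorbed into Lemma \ref{lem:bb}(2), whose proof exploits the block structure of $\mm[S;\ml]$ around $\phi_\ell$ peculiar to Case \ref{case:unrefined_lmin}; once that lemma is granted, the corollary is essentially a one-step deduction. The only subtlety to flag is the bookkeeping around the ordering of $\pp$, which must position $U$ before $D$ so that ``least prime attaining the minimum'' coincides with ``minimum attained on $U$''.
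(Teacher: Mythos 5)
Your proof is correct and follows essentially the same route as the paper: verify Lemma \ref{lem:precomputation} for $I = U$ by ordering $U$ before $D$, invoke Lemma \ref{lem:bb}(2) to get $m_\q \divides i_\q$ for $\q \in D$, and read off the $\p$-independence of $\w[\p]{G_{\ii_k}}$ from \eqref{eq:pqval_phi}. One small correction: the relevant formula is the \emph{second} line of \eqref{eq:pqval_phi} (the case $i>\ell$ of $\w[\p]{\phi_{i,\q}}$), not the first; and your explicit observation that $m_\q \divides i_\q$ forces $a_0 = \cdots = a_\ell = 0$ in the canonical expansion is a useful detail that the paper leaves somewhat implicit.
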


\begin{proof}
	Let us show that $U$ meets the criterion of Lemma \ref{lem:precomputation}.
	  
	Let $\ii = (i_{\p})_{\p \in S}$ be an output of $\mm[S;\ml]$ and let $g = g_{\ii}$ be the corresponding numerator. Suppose that $\w[U]{g} = \w[S]{g}$. With respect to the ordering of $S$, all elements in $U$ are less than all elements in $D$; hence, the least prime $\p$ with $\w[\p]{g} = \w[S]{g}$ belongs to $U$. By (2) of Lemma \ref{lem:bb}, $m_{\q} \divides i_{\q}$ for all $\q \in D$, and this implies that none of the numerators $g_{i_{\q},\q}$, for $\q \in D$, is divisible by $\phi_{\ell}$.
	
	Therefore, \eqref{eq:pqval_phi} shows that $\w[\p]{g_{i_{\q},\q}} = (i_{\q}/\ml)c$ for all $\p \in U$, and the value $\w[\p]{G_{\ii}} = \w[\p]{\prod_{\q \in D} g_{i_{\q},\q}}$ is independent of $\p \in U$.
\end{proof}

\subsubsection{Proof of Theorem \ref{thm:maxmin_ml_works} in Case \ref{case:unrefined_lmin}}

Denote the respective output families of numerators of $\mm[U;\ml]$ and  $\mm[D;\ml]$ by:
\begin{align*}
	&1, g_{1}, \dots, g_{n_{U}/\ml}; \qquad
	1, g'_{1}, \dots, g'_{n_{D}/\ml}.
\end{align*}
Note that $\deg g_{k} = k\ml$, for all $k \le n_{U}/\ml$ and $ \deg g'_{k} = k\ml$ for all $k \le n_{D}/\ml$.

Let $\ii=(i_{\p})_{\p \in S}$ be an output of $\mm[S;\ml]$. Since $U$ and $D$ admit precomputation, Lemma \ref{lem:precomputation2} shows that $g_{\ii} = g_{i} g'_{j}$, for the $k$-th output $(i,j)$ of $\mm[S=U\cup D;\ml]$.

\begin{notation}
	We denote $[j] := \ord_{\phi_{\ell}}(g'_{j})$, for $0 \leq j \le n_{D}/\ml$.
\end{notation}

By Lemma \ref{lem:bb}, all indices $i_{\q}$, for $\q \in D$ are divisible by $m_{\q}$ except eventually for one, say $i_{\q_{0}}$. Hence, $[j]$ is the residue of the euclidian division of $i_{\q_{0}}$ by $m_{\q_{0}}$. Note that $[j]=0$ if and only if $m_{\q} \divides i_{\q}$ for all $\q \in D$. 

Consider rational numbers $\nu_{i}, \nu'_{j}$ as in \eqref{eq:def_nu}. The formulas \eqref{eq:udval_gg} translate into
\begin{align}
	\label{eq:udval_gg2} 
	\begin{aligned}
		\w[U]{g_{i} g'_{j}} &=\Min\set{\w[\p]{g_{i}} + [j] \delta_{\p}}_{\p \in U} + jc,\\
		\w[D]{g_{i} g'_{j}} &= \w[D]{g'_{j}} + ic = \nu'_{j} + (i+j) c.  
	\end{aligned}
\end{align}

\begin{lemma}
	\label{lem:increasing2} 
	These data $\nu_{i}, \nu'_{j}$ satisfy the following properties for all $i, j > 0$:
	\begin{enumerate}
		\item $\nu_{i-1} < \nu_{i}$.
		\item $\nu'_{j-1} \leq \nu'_{j}$ and if $[j] \neq 0$ then equality holds.
	\end{enumerate}
\end{lemma}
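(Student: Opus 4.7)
The two statements are parallel applications of the maximality of the numerators produced by $\mm[U;\ml]$ and $\mm[D;\ml]$, playing off the candidate polynomials $g_{i-1}\phi_{\ell,\q}$ and $g'_{j-1}\phi_\ell$ respectively. I would handle the easier half (2) first.

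For (2), the polynomial $g'_{j-1}\phi_\ell$ has degree $j\ml$ and support in $D$, so the maximality of $g'_j$ gives $\w[D]{g'_j}\ge\w[D]{g'_{j-1}\phi_\ell}$. In Case~\ref{case:unrefined_lmin} every $\p\in D=\Smin$ satisfies $\phi_{\ell,\p}=\phi_\ell$ and $\lambda_{\ell,\p}=\lmin$, so Proposition~\ref{prop:explicit_valuations} yields $\w[\p]{\phi_\ell}=c$ uniformly in $\p\in D$. Hence $\w[D]{g'_{j-1}\phi_\ell}=\w[D]{g'_{j-1}}+c$ and $\nu'_j\ge\nu'_{j-1}$. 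For the equality case, an application of Lemma~\ref{lem:bb} to the single-set run of $\mm[D;\ml]$ (or a direct parallel argument) shows that the iterates of $\mm[D;\ml]$ form runs in which each step multiplies the previous numerator by $\phi_\ell$, until the $\phi_\ell$-order reaches $e_{\ell,\p}f_{\ell,\p}$ and a transition to $\phi_{\ell+1,\p}$ takes place. The hypothesis $[j]=\ord_{\phi_\ell}(g'_j)\neq 0$ puts $j$ strictly inside such a run, so $g'_j=g'_{j-1}\phi_\ell$ and therefore $\w[D]{g'_j}=\w[D]{g'_{j-1}}+c$, giving $\nu'_j=\nu'_{j-1}$.

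For (1), fixing any $\q\in U$, the polynomial $g_{i-1}\phi_{\ell,\q}$ has degree $i\ml$ and support in $U$, so the maximality of $g_i$ provides
\[
\w[U]{g_i}\;\ge\;\w[U]{g_{i-1}\phi_{\ell,\q}}\;\ge\;\w[U]{g_{i-1}}+\min_{\p\in U}\w[\p]{\phi_{\ell,\q}}.
\]
Everything now reduces to the strict lower bound $\min_{\p\in U}\w[\p]{\phi_{\ell,\q}}>c$. I would verify it via Proposition~\ref{prop:explicit_valuations}, splitting on $i(\p,\q)$. When $\p=\q$ or $i(\p,\q)>\ell$, the valuation equals $(V_\ell+\lambda_{\ell,\cdot})/(\eto{\ell-1})$; since $U=S\setminus\Smin$ in Case~\ref{case:unrefined_lmin} consists precisely of primes whose $\phi_\ell$-branch carries non-optimised slope $>\lmin$, the optimised slope $\lambda_{\ell,\cdot}$ (a sum of such slopes from the collapsed refinements) is strictly greater than $\lmin$. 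When $i(\p,\q)=\ell$, the formula involves $\clam{\p}{\q}$ or $\clam{\q}{\p}$; unpacking Definition~\ref{def:cphi_hiddenslope}, the first entries $\lambda_{(1)}^{\type_\p}$ and $\lambda_{(1)}^{\type_\q}$ of the refinement lists are the non-optimised slopes of $\p,\q$ at $\phi_\ell$, both strictly $>\lmin$, and the optimised hidden slopes $\clam{\p}{\q}, \clam{\q}{\p}$ are sums beginning with these, hence also strictly exceed $\lmin$. In all cases $\w[\p]{\phi_{\ell,\q}}>c$, so $\nu_i>\nu_{i-1}$.

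The main obstacle is the hidden-slope step inside case $i(\p,\q)=\ell$: this is precisely where the defining feature of Case~\ref{case:unrefined_lmin} — that no $\lmin$-branch has been refined, so $U$ corresponds bijectively to the strictly-larger-slope branches of $\phi_\ell$ — is indispensable. The analogous statement would fail in Case~\ref{case:refined_lmin}, where $\lmin$-refinements can contaminate the relevant slopes, which is why Case~\ref{case:refined_lmin} is treated separately and the strict inequality there is replaced by the weaker inequality of Lemma~\ref{lem:increasing}.
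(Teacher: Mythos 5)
Your proof is correct and follows the same route as the paper: compare $g_i$ with $g_{i-1}\phi_{\ell,\q}$ for (1), and use maximality of $g'_j$ together with Lemma~\ref{lem:bb} for (2). The main difference is that you spell out the verification $\w[\p]{\phi_{\ell,\q}}>c$ for \emph{all} $\p,\q\in U$ (splitting on $i(\p,\q)>\ell$ vs.\ $i(\p,\q)=\ell$ and invoking the hidden slopes), whereas the paper states this rather tersely, checking only the diagonal term $\w[\p]{\phi_{\ell,\p}}$ explicitly and leaving the cross terms implicit; your version makes the strict inequality $\w[U]{g_{i-1}\phi_{\ell,\q}}>\w[U]{g_{i-1}}+c$ transparent. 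One small caveat: you assert that $\lambda_{(1)}^{\type_\p}$, $\lambda_{(1)}^{\type_\q}$ are the non-optimised slopes at $\phi_\ell$. This is indeed true in Case~\ref{case:unrefined_lmin}, but because the $D$-primes have optimised slope exactly $\lmin$ so no unibranch steps can precede $\phi_\ell$; you could sidestep the issue entirely by noting that $\clam{\p}{\q}$ is a sum of positive terms that includes the $\phi_\ell$-branching slope $\lambda_\p>\lmin$, so $\clam{\p}{\q}>\lmin$ regardless of whether $\phi_\ell$ is the first entry of the refinement list.
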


\begin{proof}
	Take any $\p \in U$. By Proposition \ref{prop:explicit_valuations}, $\w[\p]{\phi_{\ell,\p}} = (V_{\ell} + \lambda_{\ell,\p})/(\eto{\ell-1}) > c$, since $\lambda_{\ell,\q} > \lmin$. The maximality of $g_{i}$ implies
	\begin{align*}
		\w[U]{g_{i}} \geq \w[U]{g_{i-1} \phi_{\ell,\p}} > \w[U]{g_{i-1}} + c.
	\end{align*}
		
	This proves (1). Similarly, the maximality of $g'_{j}$ implies $\nu'_{j-1} \leq \nu'_{j}$.
	
	By Lemma \ref{lem:bb}, $[j] \ne 0$ implies that $g'_{j} = g'_{j-1} \phi_{\ell}$. Since $\w[\q]{\phi_{\ell}} = c$ for all $\q \in D$, this implies $\w[D]{g'_{j}} = \w[D]{g'_{j-1}} + c$. This proves (2).
\end{proof}

\begin{lemma}
	\label{lem:sepmax21} 
	Let  $\ii = (i,j)$ be an output pair of $\mm[S=U\cup D;\ml]$. Then,
	\begin{enumerate}
		\item Either $\nu'_{j-1} \leq \nu_{i} \leq \nu'_{j}$, or $\nu_{i-1} \leq \nu'_{j} < \nu_{i}$.
		\item The next output pair is $(i+1,j)$ in the first case, and $(i, j+1)$ in the second case. 
	\end{enumerate}
\end{lemma}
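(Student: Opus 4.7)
My approach is to prove (1) and (2) simultaneously by induction on $k = i+j$, ranging over output pairs $(i,j)$ of $\mm[S=U\cup D;\ml]$. The base case $(0,0)$ is immediate: $\nu_0 = \nu'_0 = 0$ and $\nu'_{-1} = -1$, so the first alternative of (1) holds; moreover $[0]=0$ gives $\w[U]{1} = \w[D]{1}$, and the ordering convention (primes of $U$ coming first in $S$) breaks the tie in favor of $U$-min, producing $(1,0)$ as the next pair, in agreement with (2).

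For the inductive step, I first establish (1) at $(i,j)$ by splitting on how it was reached. If the predecessor is $(i-1,j)$, then $U$-min was chosen there, so by the inductive hypothesis we were in case (a) at that pair; if the predecessor is $(i,j-1)$, then $D$-min was chosen and case (b) held. In each sub-case, combining the inductive inequalities with Lemma \ref{lem:increasing2} quickly yields one of the two alternatives of (1) at $(i,j)$, depending on whether $\nu_i \le \nu'_j$ or $\nu_i > \nu'_j$.

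Next I would verify (2) by comparing $\w[U]{g_ig'_j}$ with $\w[D]{g_ig'_j}$ via \eqref{eq:udval_gg2}: after subtracting the common $(i+j)c$, the algorithm's decision reduces to comparing $\min_{\p \in U}\{(\w[\p]{g_i}-ic)+[j]\delta_\p\}$ with $\nu'_j$. When $[j]=0$ this minimum is exactly $\nu_i$; when $[j]>0$ it strictly exceeds $\nu_i$ because $\delta_\p > 0$ for every $\p\in U$. Case (b) therefore always yields $D$-min regardless of the value of $[j]$ (so the next pair is $(i,j+1)$), which also agrees with Lemma \ref{lem:bb}: an active $D$-block is forced to continue. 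The remaining obligation in (2) is the subcase of (a), which reduces to the key auxiliary claim that $[j]=0$ whenever case (a) holds at $(i,j)$.

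The main obstacle is establishing this claim, and I would prove it by contradiction. Suppose $[j]>0$; then $(i,j)$ lies strictly inside a $D$-block whose starting pair is $(i, j_0)$ with $j_0 = j-[j]$ and $[j_0]=0$, and by Lemma \ref{lem:bb} the block preserves the $U$-coordinate $i$ throughout. At the strictly earlier pair $(i,j_0)$ the inductive hypothesis applies, and since $D$-min was chosen there we were in case (b), giving $\nu'_{j_0} < \nu_i$. Lemma \ref{lem:increasing2}(2) applied across the block (each intermediate index satisfies $[j_0+k]\ne 0$) gives $\nu'_{j_0+k} = \nu'_{j_0}$ for $1\le k\le [j]$, whence $\nu'_j = \nu'_{j_0} < \nu_i$, contradicting case (a). To close the induction, one last check using Lemma \ref{lem:increasing2} confirms that the new pair produced by (2) — either $(i+1,j)$ or $(i,j+1)$ — again satisfies the dichotomy (1).
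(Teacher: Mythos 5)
Your proof is correct and shares the same overall architecture as the paper's: a simultaneous induction on $k=i+j$, where (1) is verified at $(i,j)$ from the predecessor exactly as in Proposition \ref{prop:separated_maximal}, and (2) reduces (after cancelling $(i+j)c$ in \eqref{eq:udval_gg2}) to the auxiliary claim that $[j]=0$ whenever case (a) holds at $(i,j)$, Case (b) needing no such claim because $\delta_\p\ge 0$ already forces the $D$-min. The genuine difference lies in how you establish this auxiliary claim. The paper derives it in one step from the predecessor: in both sub-cases (predecessor $(i-1,j)$ or $(i,j-1)$), the inductive inequalities plus Lemma \ref{lem:increasing2}(1) immediately yield the \emph{strict} inequality $\nu'_{j-1}<\nu'_j$, which by Lemma \ref{lem:increasing2}(2) forces $[j]=0$. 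You instead argue by contradiction, tracing back through the $D$-block to its start $(i,j_0)$ with $j_0=j-[j]$, invoking Lemma \ref{lem:bb} (transported to the bi-index setting via precomputation) to justify that the $U$-coordinate is frozen throughout the block, applying the IH at $(i,j_0)$ to get $\nu'_{j_0}<\nu_i$, and then using Lemma \ref{lem:increasing2}(2) across the block to propagate this to $\nu'_j<\nu_i$. Both routes are valid; yours makes the block mechanism more visible and reuses Lemma \ref{lem:bb} directly, whereas the paper's is shorter and stays entirely within the $\nu_i,\nu'_j$ bookkeeping without re-invoking Lemma \ref{lem:bb}. The only place your write-up could be tightened is in making explicit the precomputation identification when you transfer Lemma \ref{lem:bb} (which is phrased for $\mm[S;\ml]$ multi-indices) into a statement about the bi-indices of $\mm[S=U\cup D;\ml]$; the fact that $[j_0]=0$ is equivalent to $m_\q\divides i_\q$ for all $\q\in D$ is what makes the hypothesis of Lemma \ref{lem:bb}(1) fire at $(i,j_0)$.
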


\begin{proof} 
	Clearly, the initial pair $(0,0)$ satisfies (1), the next output pair is $(1,0)$, and it satisfies (1) too. Let us show by induction that if an output pair satisfies (1) then the next output pair is given as indicated in (2) and it satisfies (1) as well.
	
	Suppose that $\nu'_{j-1} \leq \nu_{i} \leq \nu'_{j}$. If the previous output pair was $(i-1,j)$, the induction hypothesis implies that we had $\nu'_{j-1} \leq \nu_{i-1} \leq \nu'_{j}$. Since $\nu'_{j-1} \leq \nu_{i-1} < \nu_{i} \leq\nu'_j$, we have $[j]=0$ by item (2) of Lemma \ref{lem:increasing2}.  If the previous output pair was $(i,j-1)$, then  $\nu'_{j-1} <  \nu_{i}$ by the induction hypothesis. This leads again to $\nu'_{j-1} < \nu'_{j}$ and to $[j]=0$. Thus, \eqref{eq:udval_gg2} shows that
	\begin{align*}
		\w[U]{g_{i} g'_{j}} &= \w[U]{g_{i}} + jc = \nu_{i} + (i+j) c \\
			&\leq \nu'_{j} + (i+j) c = \w[D]{g_{i} g'_{j}}.
	\end{align*}
	
	Thus, $\w[U]{g_{i} g'_{j}} = \w[S]{g_{i} g'_{j}}$ and the next output pair is $(i+1,j)$. The arguments of the proof of Proposition \ref{prop:separated_maximal} show that $(i+1,j)$ satisfies (1).
	
	Suppose that $\nu_{i-1} \leq \nu'_{j} < \nu_{i}$. By \eqref{eq:udval_gg2}, we have
	\begin{align*}
		\w[D]{g_{i} g'_{j}} &= \nu'_{j} + (i+j) c < \nu_{i} + (i+j) c \\
			&= \w[U]{g_{i}} + jc = \w[U]{g_{i} g'_{j}}.
	\end{align*}
	
	Thus, $\w[D]{g_{i} g'_{j}} = \w[S]{g_{i} g'_{j}}$ and the next output pair is $(i,j+1)$. The arguments of the proof of Proposition \ref{prop:separated_maximal} show that $(i,j+1)$ satisfies (1).
\end{proof}

\begin{lemma}
	\label{lem:cas1} 
	Consider indices $0 \leq k \leq i$ and let $G$ be a polynomial of degree $(i-k)\ml$ with support in $U$. Then,
	\begin{align*}
		\Min\set{ \w[\p]{G} + k \delta_{\p} }_{\p \in U} &\leq \nu_i + (i-k) c.
	\end{align*}
\end{lemma}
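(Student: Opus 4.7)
The plan is first to reduce the claim to an inequality between $w$-values of two explicit polynomials, and then to invoke the overall induction hypothesis on Theorem \ref{thm:maxmin_ml_works} applied to the smaller subset $U \subsetneq S$.

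Using the identity $\w[\p]{\phi_\ell} = c + \delta_\p$ from \eqref{eq:pqval_phi} (valid for $\p \in U$), one computes
$$\w[U]{G \phi_\ell^k} = \Min_{\p \in U}\{\w[\p]{G} + k(c+\delta_\p)\} = \Min_{\p \in U}\{\w[\p]{G} + k\delta_\p\} + kc,$$
while $\w[U]{g_i} = \nu_i + ic$. Hence the claim is equivalent to $\w[U]{G \phi_\ell^k} \le \w[U]{g_i}$. Since $G\phi_\ell^k$ has support in $S$ rather than $U$ (because $\phi_\ell$ lies over $D$), the induction hypothesis on the maximality of $g_i$ among polynomials of degree $i\ml$ with support in $U$ cannot be invoked directly. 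The strategy is instead to produce an auxiliary polynomial $H$ of degree $i\ml$ with support in $U$ such that $\w[U]{H} \ge \w[U]{G\phi_\ell^k}$; maximality then yields $\w[U]{g_i} \ge \w[U]{H} \ge \w[U]{G\phi_\ell^k}$.

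To construct $H$, the first natural attempt is $H = G \cdot \phi_{\ell,\l}^k$ for an appropriately chosen $\l \in U$ (for instance, one of maximal slope $\lambda_\l$). The required inequality $\w[\p]{H} \ge \w[\p]{G\phi_\ell^k}$ for all $\p \in U$ reduces, by Proposition \ref{prop:explicit_valuations}, to the pointwise estimate $\w[\p]{\phi_{\ell,\l}} \ge c + \delta_\p$. This holds with equality when $\p = \l$ or when $i(\p,\l) > \ell$; the only delicate case is $i(\p,\l) = \ell$ with $\phi_{\ell,\l} \ne \phi(\p,\l)$, where the inequality becomes $\minclam{\p}{\l} \ge \lambda_\p$ and one must combine the bound $\clam{\p}{\l} \le \lambda_\p$ of Remark \ref{rmk:nonop_valuations} with the specific structure of Case \ref{case:unrefined_lmin} (the $\lmin$-branches being unrefined while those in $U$ may be refined).

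The main obstacle is precisely this last case, where a single choice of $\l \in U$ may fail to verify the pointwise inequality for every $\p \in U$ simultaneously. When this happens, the fallback is to apply Proposition \ref{prop:okutsu_basis_comb_optimal} to $G \phi_\ell^k \in \Phi(S)$, producing $g \in \Ok(S)$ of degree $i\ml$ with $\w[\p]{g} \ge \w[\p]{G\phi_\ell^k}$ for all $\p \in S$, decompose $g = g_U g_D$, and then iteratively transfer each factor $\phi_{m,\q}$ of $g_D$ (for $\q \in D$, $m \ge \ell$) into a product $\phi_{\ell,\l}^{m_{m,\q}/\ml}$ with $\l \in U$, invoking Lemma \ref{lem:greater_cross_value} to guarantee that the cross-valuations $\w[\p]$ for $\p \in U$ do not decrease at any step. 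The resulting $H$ has support in $U$, degree $i\ml$, and satisfies $\w[U]{H} \ge \w[U]{G\phi_\ell^k}$, closing the argument. Verifying that the transfer conditions of Lemma \ref{lem:greater_cross_value} are met at every step, exploiting the fact that in Case \ref{case:unrefined_lmin} all primes of $D$ share the same unrefined branch, is the technical heart of the proof.
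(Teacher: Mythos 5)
Your first attempt is already the full proof. Taking $\l\in U$ with $\lambda_\l$ maximal and setting $H=G\,\phi_{\ell,\l}^k$ (which has support in $U$ and degree $i\ml$), the inequality $\w[\p]{\phi_{\ell,\l}}\ge c+\delta_\p$ for all $\p\in U$ \emph{always} holds — there is no delicate case requiring a fallback. By Proposition~\ref{prop:explicit_valuations}, $\w[\p]{\phi_{\ell,\l}}=(V_\ell+\lambda)/(\eto{\ell-1})$ for some rational $\lambda$ depending on $\p$, and the inequality reduces to $\lambda\ge\lambda_\p$. If $i(\p,\l)>\ell$ then $\lambda=\lambda_{\ell,\l}=\lambda_{\ell,\p}\ge\lambda_\p$ by Remark~\ref{rmk:nonop_valuations}. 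If $i(\p,\l)=\ell$, then $\lambda$ equals $\clam{\p}{\l}$ or $\minclam{\p}{\l}$. When $\p,\l$ lie on different $\phi_\ell$-branches of the non-optimised tree, $\clam{\p}{\l}=\lambda_\p$ and $\clam{\l}{\p}=\lambda_\l$, so $\minclam{\p}{\l}=\lambda_\p$ by the maximality of $\lambda_\l$. When they lie on the same $\phi_\ell$-branch, $\lambda_\p=\lambda_\l$ and both hidden slopes are sums whose first term equals $\lambda_\p$, so again $\minclam{\p}{\l}\ge\lambda_\p$. In every case $\lambda\ge\lambda_\p$, and the chain of inequalities closes.

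There are two concrete errors in your write-up. First, you state the bound from Remark~\ref{rmk:nonop_valuations} as $\clam{\p}{\l}\le\lambda_\p$; the correct direction is the opposite, since $\lambda_\p=\lambda_{(1)}^{\type_\p}$ is the \emph{first} summand of the hidden slope $\clam{\p}{\l}=\lambda_{(1)}^{\type_\p}+\cdots+\lambda_{(j)}^{\type_\p}$, so $\clam{\p}{\l}\ge\lambda_\p$. This is precisely what makes the direct argument work, and the confusion between $\lambda_\p$ (the branch slope $\lambda_{(1)}^{\type_\p}$) and $\lambda_{\ell,\p}$ (the full optimised slope) is what led you to suspect a gap where none exists. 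Second, the proposed fallback via Proposition~\ref{prop:okutsu_basis_comb_optimal} followed by transfers guided by Lemma~\ref{lem:greater_cross_value} would not go through: that lemma's hypotheses $I(\l,\q)\ge I(\p,\q)$ and $\clam{\l}{\q}\ge\clam{\p}{\q}$ encode that $\l$ is \emph{closer} to $\q$ than $\p$ is, whereas here $\q\in D$ lies on an $\lmin$-branch and $\l\in U$ does not, so $\l$ is typically \emph{farther} from $\q$ than any $\p\in D$ — the lemma's conditions would systematically fail. So the detour is not only unnecessary but would not repair anything; the case analysis sketched above is all that is missing from your argument.
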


\begin{proof}
	Let $\q \in U$ be a prime ideal with a maximal value of $\lambda_{\q}$. The statement follows from the following chain of inequalities:
	\begin{align}
		\label{eq:cas1_proof} 
		\Min\set{\w[\p]{G} + k \delta_{\p}}_{\p \in U} + kc &\leq \w[U]{G \phi_{\ell,\q}^{k}} 
			\leq \w[U]{g_{i}} = \nu_{i} + ic.
	\end{align}
	
	The second inequality of \eqref{eq:cas1_proof} follows from the maximality of $g_{i}$. The first inequality is deduced from the formulas from Proposition \ref{prop:explicit_valuations}. In fact, for any $\p \in U$, these formulas yield $\w[\p]{\phi_{\ell,\q}} = (V_{\ell} + \lambda)/(\eto{\ell-1})$, for a certain rational number $\lambda$, depending on $\p$, such that $\lambda \geq \lambda_{\p}$; hence,
	\begin{align*}
		\w[\p]{G \phi_{\ell,\q}^{k}} &= \w[\p]{G} + k \frac{V_{\ell} + \lambda}{\eto{\ell-1}} \\
			&\ge \w[\p]{G} + k (c + \delta_{\p}),
	\end{align*}
	for all $\p \in U$, which implies the first inequality in \eqref{eq:cas1_proof}.
	
	More precisely, if $i(\p,\q) = \ell$, then $\lambda = \clam{\p}{\q}$ or $\lambda = \minclam{\p}{\q}$,  according to $\phi(\p,\q)$ being equal to $\phi_{\ell,\q}$ or not. Now, if $\p$ and $\q$ belong to the same $\phi_{\ell}$-branch of the non-optimised tree, we have (see Definition \ref{def:cphi_hiddenslope})
	\begin{align*}
		\lambda_{\p} = \lambda_{\q} < \minclam{\p}{\q} \leq \lambda.
	\end{align*}
		
	If $\p$ and $\q$ belong to different $\phi_{\ell}$-branches of the non-optimised tree, then $\clam{\p}{\q} = \lambda_{\p}$, $\clam{\q}{\p} = \lambda_{\q}$, so that, again,
	\begin{align*}
		\lambda_{\p} &=  \Min\set{\lambda_{\p}, \lambda_{\q}} =  \minclam{\p}{\q} \leq \lambda.
	\end{align*}
	
	Finally, if $i(\p,\q) > \ell$, then $\lambda = \lambda_{\ell,\q} = \lambda_{\ell,\p} \ge \lambda_{\p}$, by Remark \ref{rmk:nonop_valuations}.
\end{proof}

We are ready to prove Theorem \ref{thm:maxmin_ml_works} in Case \ref{case:unrefined_lmin}.

\begin{proposition}
	\label{prop:separated_maxmin22} 
	In Case \ref{case:unrefined_lmin}, all output multi-indices of $\mm[S;\ml]$ are maximal amongst the multi-indices of the same degree whose coordinates are all divisible by $\ml$.
\end{proposition}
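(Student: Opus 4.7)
My plan is to mimic Proposition \ref{prop:separated_maximal}, with extra bookkeeping to absorb the $\delta_\p$-shifts that \eqref{eq:udval_gg} introduces in Case \ref{case:unrefined_lmin}. I will proceed by induction on $\#S$, the base case $\#S=1$ being trivial. In the induction step, I use the precomputability of $U$ and $D$ established above together with Lemma \ref{lem:precomputation2} to identify $\mm[S;\ml]$ with $\mm[S = U \cup D; \ml]$. Its $k$-th output pair $(i,j)$ yields the numerator $g_i g'_j$, where $g_i$ and $g'_j$ are the outputs of $\mm[U;\ml]$ and $\mm[D;\ml]$, which are inductively maximal on $U$ and $D$, respectively.

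First, I will show that $\w[S]{g_i g'_j} = \Min\set{\nu_i,\ \nu'_j} + (i+j) c$. Using \eqref{eq:udval_gg2} together with Lemma \ref{lem:sepmax21}(1): in the $U$-minimal case $[j] = 0$ (this is established inside the proof of Lemma \ref{lem:sepmax21}), hence $\w[U]{g_i g'_j} = \nu_i + (i+j)c \leq \nu'_j + (i+j)c = \w[D]{g_i g'_j}$; in the $D$-minimal case, the nonnegativity $\delta_\p \geq 0$ forces $\w[U]{g_i g'_j} \geq \nu_i + (i+j)c > \nu'_j + (i+j)c = \w[D]{g_i g'_j}$. In both cases the claimed formula holds.

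Next, I will bound $\w[S]{g^*}$ for an arbitrary $g^* = G G'$ of degree $(i+j)\ml$ coming from a multi-index with coordinates divisible by $\ml$, where $\deg G = (i-k)\ml$ with support in $U$ and $\deg G' = (j+k)\ml$ with support in $D$ for some $k \in \Z$. The key move will be the decomposition $G' = \phi_\ell^m H'$ with $H'$ free of $\phi_\ell$; then $H'$ has degree $(j+k-m)\ml$ with support in $D$. Since $\w[\q]{\phi_\ell} = c$ for every $\q \in D$, the maximality of $g'_{j+k-m}$ applied to $H'$ will give
\[
\w[D]{g^*} = (i-k) c + \w[D]{G'} \leq (i-k) c + m c + \nu'_{j+k-m} + (j+k-m) c = \nu'_{j+k-m} + (i+j) c,
\]
while Lemma \ref{lem:cas1} applied to $G$ with the lemma's parameter taken equal to $m$, together with \eqref{eq:udval_gg}, will yield
\[
\w[U]{g^*} = \Min\set{\w[\p]{G} + m \delta_\p}_{\p \in U} + (j+k) c \leq \nu_{i-k+m} + (i+j) c.
\]
Setting $k' := k - m$, these combine into $\w[S]{g^*} \leq \Min\set{\nu_{i-k'},\ \nu'_{j+k'}} + (i+j) c$.

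The final step will be the inequality $\Min\set{\nu_{i-k'}, \nu'_{j+k'}} \leq \Min\set{\nu_i, \nu'_j}$, which is exactly \eqref{eq:min_leq_min}: it is a formal consequence of Lemma \ref{lem:sepmax21}(1) and the monotonicities from Lemma \ref{lem:increasing2}, both available in Case \ref{case:unrefined_lmin}, and the verification runs verbatim as in the proof of Proposition \ref{prop:separated_maximal}. Combining with the formula of the first step gives $\w[S]{g^*} \leq \w[S]{g_i g'_j}$, the desired maximality. The hard part will be handling the boundary regime in which $i - k + m$ exceeds $n_U/\ml$, placing the invocation of Lemma \ref{lem:cas1} outside its stated hypotheses; I expect to exploit the fine structure of $G$ and $G'$ as products of Okutsu numerators (so that $\deg G \leq n_U$ and the attainable orders of $\phi_\ell$ in $G'$ are constrained by the canonical expansions) to check that either $i - k + m$ remains within the valid range or the $D$-bound $\w[D]{g^*} \leq \nu'_{j+k-m} + (i+j) c$ alone already suffices for the conclusion.
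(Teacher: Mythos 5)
Your overall strategy matches the paper's: reduce to the two-block algorithm via precomputability of $U$ and $D$ and Lemma~\ref{lem:precomputation2}, split any competitor as $g^*=GG'$ with $\deg G=(i-k)\ml$ and $G'=H'\phi_\ell^m$, bound $\w[D]{g^*}\le\nu'_{j+k-m}+(i+j)c$ via the maximality of the $D$-numerators (this is exactly \eqref{eq:nuprima}), and bound $\w[U]{g^*}$ via Lemma~\ref{lem:cas1}. You are also more careful than the paper's display in the $D$-minimal case (where the paper writes ``$=\w[U]{g_ig'_j}$'' but only ``$\le$'' is available when $[j]\ne 0$; your use of $\delta_\p\ge0$ to get ``$\ge$'' is the correct step).

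The genuine gap is in the $U$-bound, and you already sense it. You invoke Lemma~\ref{lem:cas1} with parameter $m$, yielding $\w[U]{g^*}\le\nu_{i-k+m}+(i+j)c$, and then apply \eqref{eq:min_leq_min} to the shifted pair $(\nu_{i-k'},\nu'_{j+k'})$ with $k'=k-m$. This requires $i-k'=i-k+m\le n_U/\ml$, which can fail whenever $m>k$. Your proposed repair via ``the fine structure of $G$ and $G'$ as products of Okutsu numerators'' does not actually control this overshoot: even with $\deg G\le n_U$, one may have $i-k=n_U/\ml$ and $m$ as large as the full $\phi_\ell$-order of $G'$. What does close the gap is a simple sign split on $k'=k-m$, which the paper effectively does. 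When $m>k$ we have $q=j+k-m\le j-1$, so $\nu'_q\le\nu'_{j-1}$, and by Lemma~\ref{lem:sepmax21}(1) this already sits below $\Min\set{\nu_i,\nu'_j}$ in both the $U$-minimal case ($\nu'_{j-1}\le\nu_i$) and the $D$-minimal case ($\nu'_{j-1}\le\nu'_j$); the $D$-bound alone finishes. When $m\le k$ we have $i-k+m\le i\le n_U/\ml$, so the $U$-bound is legitimate, and $\nu_{i-k+m}\le\nu_i$ (respectively $\le\nu_{i-1}\le\nu'_j$ if $m<k$) by Lemma~\ref{lem:increasing2}(1) and Lemma~\ref{lem:sepmax21}(1). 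The paper avoids going out of range altogether by first using $\delta_\p\ge0$ to replace $m$ by $k$ (or $k-1$) \emph{inside} the $\Min$ before invoking Lemma~\ref{lem:cas1}, so the index produced by the lemma is always $\nu_i$ or $\nu_{i-1}$, never $\nu_{i-k+m}$. Either route works; yours is a touch more symmetric once the boundary case is dispatched, but you must replace the ``fine structure'' heuristic by this elementary case split to make the argument complete.
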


\begin{proof}
	Let $\ii = (i_{\p})_{\p \in S}$ be an output multi-index of $\mm[S;\ml]$, obtained by joining the $i$-th output of $\mm[U]$ and the $j$-th output of $\mm[D]$.
	
	Let $g$ be a polynomial of degree $(i+j)\ml$ with support in $S$. We may write $g = GG'$, with $G$, $G'$ polynomials with support in $U$ and $D$, respectively.
	
	Suppose $\deg G = (i-k) \ml$, $\deg G' = (j+k) \ml$, for certain $k \in \Z$. Let us write
	\begin{align*}
		G' &= H \phi_{\ell}^m,	& \phi_{\ell} \ndivides H, \  \deg H = q\ml.
	\end{align*}
	
	Note that $q + m = j + k$. By \eqref{eq:udval_gg},
	\begin{align*}
		\w[U]{G G'} &= \Min\set{\w[\p]{G} + m \delta_{\p}}_{\p \in U} + (j+k) c, \\
		\w[D]{G G'} &= \w[D]{G'} + (i-k) c.
	\end{align*}
	
	Since $\w[\p]{\phi_{\ell}} = c$ for all $\q \in D$, the last equality leads to
	\begin{align}
		\label{eq:nuprima} 
		\begin{aligned}
			\w[D]{G G'} &= \w[D]{H} + (m+i-k) c \le \w[D]{g'_{q}} + (m+i-k) c \\
				&= \nu'_{q} + (q+m+i-k) c = \nu'_{q} + (i+j) c.
		\end{aligned}
	\end{align}
	 
	By Lemma \ref{lem:sepmax21}, we may distinguish two cases according to the comparison of $\nu_{i}$ with $\nu'_{j}$.
	
	\begin{proofcase}
		$\nu'_{j-1} \leq \nu_{i} \leq \nu'_{j}$. In this case, we saw during the proof of Lemma \ref{lem:sepmax21} that $[j] = 0$. Hence, $\w[S]{g_{i} g'_{j}} = \w[U]{g_{i} g'_{j}} = \nu_{i} + (i+j) c$, by \eqref{eq:udval_gg2}. We want to show that
		\begin{align*}
			\w[S]{G G'} &= \Min\set{\w[U]{G G'}, \w[D]{G G'}} \leq \nu_{i} + (i+j) c.
		\end{align*}
		
		If $m \leq k$, then Lemma \ref{lem:cas1} shows that
		\begin{align*}
			\w[U]{G G'} &= \Min\set{\w[\p]{G} + m \delta_{\p}}_{\p \in U} + (j+k) c \\
				& \leq \Min\set{\w[\p]{G} + k \delta_{\p}}_{\p \in U} + (j+k) c \\
				&\leq \nu_{i} + (i-k) c + (j+k) c = \nu_{i} + (i+j) c.
		\end{align*}
			
		If $m > k$, then $q < j$, or equivalently $q \leq j-1$. Thus, \eqref{eq:nuprima} shows that
		\begin{align*}
			\w[D]{G G'} &\le \nu'_{q} + (i+j) c \leq \nu'_{j-1} + (i+j) c \leq \nu_{i} + (i+j) c.
		\end{align*}
	\end{proofcase}
	
	\begin{proofcase}
		$\nu_{i-1} \leq \nu'_{j} < \nu_{i}$. In this case, we saw during the proof of Lemma \ref{lem:sepmax21} that $\w[S]{g_{i} g'_{j}} = \w[D]{g_{i} g'_{j}} = \nu'_{j} + (i+j) c$. We want to show that
		\begin{align*}
			\w[S]{G G'} &= \Min\set{\w[U]{G G'}, \w[D]{G G'}} \le \nu'_{j} + (i+j) c.
		\end{align*}
		
		If $m < k$, then $m \leq k-1$. Having in mind that $\deg G/\ml = i-k = (i-1) - (k-1)$, Lemma \ref{lem:cas1} shows that 
		\begin{align*}
			\w[U]{G G'} &= \Min\set{\wt[\p]{G} + m \delta_{\p}}_{\p \in U} + (j+k) c \\
				&\leq \Min\set{\wt[\p]{G} + (k-1) \delta_{\p}}_{\p \in U} + (j+k) c \\
				&\leq \nu_{i-1} + (i-k) c + (j+k) c = \nu_{i-1} + (i+j) c \le \nu'_{j} + (i+j) c.
		\end{align*}
		
		If $m \geq k$, then $q \leq j$ and \eqref{eq:nuprima} shows that
		\begin{align*}
			\w[D]{G G'} &\leq \nu'_{q} + (i+j) c \leq \nu'_{j} + (i+j) c.
		\end{align*}
	\end{proofcase}
\end{proof}

\bibliographystyle{plain}
\bibliography{./bibliography/montes}

\end{document}